\def\rcr{\overline{\hbox{\rm cr}}}
\def\alli{{\overline{\ell_i'}}}
\def\elli{\overline{{\overline{\ell_i'}}}}
\def\ellj{\overline{{\overline{\ell_j'}}}}
\def\tfl#1{{\lfloor{#1}\rfloor}}
\def\tcl#1{{\lceil{#1}\rceil}}
\def\ll{{\cal L}}
\def\rr{{\cal R}}
\def\hal{\vrule height 5 pt width 0.05 in  depth 0.8 pt}
\newtheorem{theorem}{Theorem}
\newtheorem{conjecture}{Conjecture}
\newtheorem{lemma}{Lemma}
\newtheorem{proposition}{Proposition}
\newtheorem{remark}{Remark}
\newenvironment{proof}[1][Proof]{\noindent\textbf{#1.} }{\ \rule{0.5em}{0.5em}}
\def\rcr{\overline{\hbox{\rm cr}}}
\def\alli{{\overline{\ell_i'}}}
\def\elli{\overline{{\overline{\ell_i'}}}}
\def\ellj{\overline{{\overline{\ell_j'}}}}
\def\tfl#1{{\lfloor{#1}\rfloor}}
\def\tcl#1{{\lceil{#1}\rceil}}
\begin{document}

\title{$3$-symmetric and $3$-decomposable geometric drawings of
  $K_{n}$ (extended version)\footnote{This extended version contains
    an Appendix not included in the original version.}}
\author{B.M.~\'{A}brego\footnote{Department of Mathematics. California
    State University, Northridge. Northridge, CA 91330.}
\\ M.~Cetina\footnote{Instituto de F\'\i sica, Universidad Aut\'onoma
  de San Luis Potos\'{\i}. San Luis Potos\'{\i}, SLP, Mexico 78000.}
\\ S.~Fern\'{a}ndez-Merchant${}^{\dag}$ \\
%\footnote{Department of Mathematics. California
%    State University, Northridge. Northridge, CA 91330.}  \\
J.~Lea\~{n}os\footnote{Universidad Aut\'onoma de Zacatecas, Campus
  Jalpa. Jalpa, Zacatecas, Mexico 99600.} \\  
G.~Salazar\footnote{Instituto de F\'\i sica, Universidad Aut\'onoma
  de San Luis Potos\'{\i}. San Luis Potos\'{\i}, SLP, Mexico
  78000. Supported by CONACYT Grant 45903 and by FAI--UASLP.}}
\maketitle

\begin{abstract}

Even the most superficial glance at the vast majority of crossing-minimal geometric drawings of $K_n$ reveals two hard-to-miss features. First, all such drawings appear to be $3$-fold symmetric (or simply {\em $3$-symmetric}) . And second, they all are {\em $3$-decomposable}, that is, there is a triangle $T$ enclosing the drawing, and a balanced partition $A, B, C$ of the underlying set of points $P$, such that the orthogonal projections of $P$ onto the sides of $T$ show $A$ between $B$ and $C$ on one side, $B$ between $A$ and $C$ on another side, and $C$ between $A$ and $B$ on the third side. In fact, we conjecture that all optimal drawings are $3$-decomposable, and that there are $3$-symmetric optimal constructions for all $n$ multiple of $3$. In this paper, we show that any $3$-decomposable geometric drawing of $K_n$ has at least $0.380029\binom{n}{4}+\Theta(n^3)$ crossings. On the other hand, we produce $3$-symmetric and $3$-decomposable drawings that improve the {\em general} upper bound for the rectilinear crossing number of $K_n$ to $0.380488\binom{n}{4}+\Theta(n^3)$. We also give explicit $3$-symmetric and $3$-decomposable constructions for $n<100$ that are at least as good as those previously known.

\end{abstract}

\section{Introduction}

For a finite set of points $P$ in general position in the plane, let $\overline{\hbox{\rm cr}}\left( P\right) $ denote the number of crossings in the complete \emph{geometric graph} with vertex set $P$, that is, the complete graph whose edges are straight line segments. It is an elementary observation that $\rcr(P)$ equals $\square(P)$, the number of convex quadrilaterals defined by points in $P$. If $P$ has $n$ vertices, the complete geometric graph with vertex set $P$ is also called a \emph{rectilinear drawing} of $K_{n}$. The \emph{rectilinear crossing number} of $K_{n}$, denoted $\rcr(K_{n})$, is the minimum number of crossings in a rectilinear drawing of $K_{n}$. That is, $\rcr(K_{n})=\min_{|P|=n} \rcr(P)$, where the minimum is taken over all $n$-point sets $P$ in general position in the plane. Determining $\rcr(K_{n})$ is a well-known problem in combinatorial geometry posed by Erd\H{o}s and Guy \cite{erdguy}.

Figure \ref{fig:3decomp1}(a) shows the point set of an optimal (crossing minimal) rectilinear drawing of $K_{18}$ (drawing by O.~Aichholzer and H.~Krasser, taken with permission from~\cite{aweb2}).  This drawing exhibits a natural partition of the $18$ vertices into $3$ clusters of $6$ vertices each, with two prominent features: (i) rotating any cluster angles of $2 \pi /3$ and $4 \pi /3$ around a suitable point, one obtains point sets highly resembling the other two clusters; and (ii) the orthogonal projections of these clusters on the sides of an enclosing triangle, have each projected cluster separating the other two. A similar structure is observed in {\em every} known optimal drawing of $K_{n}$, for every $n$ multiple of $3$, perhaps after an order-type preserving transformation (see~\cite{k27,aweb2}). Even the best available examples for $n>27$, i.e., for those values of $n$ for which the exact value of $\rcr(K_n)$ is still unknown, share this property \cite{aweb2}.

\begin{figure}[htbp]
\begin{center}
\includegraphics[height=2truein]{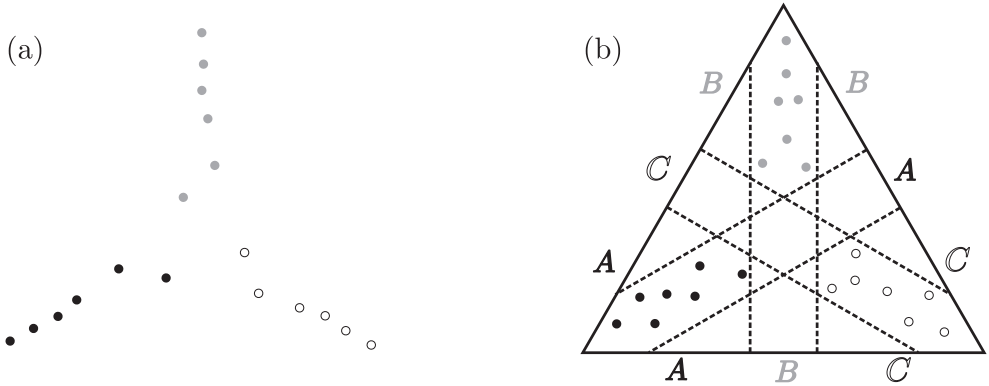}
%[width=11cm]
%\psfrag{A}{$A$}
%\psfrag{B}{$B$}
%\psfrag{C}{$C$}
 % \begin{center}
  %  \mbox{
   %   \subfigure[]{\scalebox{1}{\includegraphics[width=7cm]{k18_aich.eps}}} \quad\quad\quad
    %  \subfigure[]{\scalebox{1}{\includegraphics[width=7cm]{k9-3.eps}}} \quad
     % }
\caption{(a) An optimal geometric drawing of $K_{18}$. (b) The drawing in (a) is $3$-decomposable.}
\label{fig:3decomp1}
  \end{center}
\end{figure}

To further explore the  distinguishing features of these drawings, we introduce the concepts of $3$-{\em symmetry} and $3$-{\em decomposability}. A geometric drawing of $K_n$ is {\em $3$-symmetric} if its underlying point set  $P$ is partitioned into three {\em wings} of size $n/3$ each, with the property that rotating each wing angles of $2 \pi /3$ and $4 \pi /3$ around a suitable point generates the other two wings.  We also say that $P$ itself is $3$-{\em symmetric}. Now $3$-decomposability is a subtler, yet structurally far more significant, property that has to do with the relative orientation of the points of three $(n/3)$-point subsets of an $n$-point set (the wings, if the point set is also $3$-symmetric).
 A  finite point set $P$ is $3$\emph{-decomposable} if it can be partitioned into three equal-size sets $A$, $B$, and $C$ satisfying the following: there is a triangle $T$ enclosing $P$ such that the orthogonal projections of $P$ onto the the three sides of $T$ show $A$ between $B$ and $C$ on one side, $B$ between $A$ and $C$ on another side, and $C$ between $A$ and $B$ on the third side. We  say that a geometric drawing of $K_n$ is $3${\em -decomposable} if its underlying point set is $3$\emph{-decomposable}.
We note that whenever we speak of a $3$-decomposable or $3$-symmetric drawing of $K_n$, it is implicitly assumed that $n$ is a multiple of $3$.

In this paper, we report our recent research on $3$-decomposable and $3$-symmetric drawings. We have derived a lower bound for the number of crossings in $3$-decomposable geometric drawings.

\begin{theorem}
\label{thm:main} Let $P$ be a $3$-decomposable set of $n$ points. Then
\begin{equation*}
\overline{\hbox{\rm cr}}(P)\geq \frac{2}{27}\left( 15-\pi ^{2}\right) \binom{%
n}{4}+\Theta (n^{3})>0.380029\binom{n}{4}+\Theta (n^{3}).
\end{equation*}
\end{theorem}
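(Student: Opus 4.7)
The plan is to split the count of convex $4$-tuples of $P$ (which, via the classical identity $\rcr(P)=\square(P)$, equals $\rcr(P)$) according to how each such $4$-tuple distributes its four vertices among the three wings $A$, $B$, and $C$. Under the natural $S_{3}$-action on the wings, there are four multiplicity patterns, namely $(4,0,0)$, $(3,1,0)$, $(2,2,0)$, and $(2,1,1)$, and I would estimate the number of convex $4$-tuples of each type separately, and then sum. A useful preliminary is that $3$-decomposability forces $A$, $B$, and $C$ to be pairwise linearly separable: on the side of the enclosing triangle $T$ along which $C$ is squeezed between $A$ and $B$, the projections of $A$ and $B$ are disjoint, and any perpendicular to that side falling between those projections witnesses a line separating $A$ from $B$ (and symmetrically for the other pairs).

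For each type, the contribution is controlled via the $\le k$-edge technology. The two starting points are (i) the \'Abrego--Fern\'andez-Merchant identity expressing $\rcr$ as a positive linear combination of the $E_{\le k}$ invariants, and (ii) the Lov\'asz--Vesztergombi--Wagner--Welzl lower bound on $E_{\le k}$ in terms of the $\le(\le k)$-edges $\sum_{j\le k} E_{\le j}$. Applied inside a single wing these tools handle the $(4,0,0)$ contribution; applied bipartitely to each pair of wings they govern the $(3,1,0)$ and $(2,2,0)$ contributions; and a tripartite variant handles the most delicate $(2,1,1)$ case. The $3$-decomposability plays its essential role on the cross terms: the rigid projection conditions imposed by $T$ force lines spanned by pairs in two different wings to have predictable orientation relative to the third wing, so convexity of a mixed $4$-tuple can be translated into a $k$-edge count within a well-chosen subset.

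The principal technical obstacle is assembling these per-type lower bounds into the closed-form constant $\frac{2}{27}(15-\pi^{2})$. The factor $\pi^{2}$ arises, as is typical in this line of work, from the series $\sum_{k\ge 1} k^{-2}=\pi^{2}/6$, which is precisely the mechanism by which the $\le(\le k)$-edge machinery improves upon naive $\le k$-edge bounds. The remaining work is the optimization of the $O(1)$-many coefficients that parameterize the bipartite and tripartite $\le k$-edge estimates, subject to the combinatorial constraints that $3$-decomposability places on how $\le k$-edges distribute across the wings, and the verification that all discretization and boundary errors fall inside the $\Theta(n^{3})$ correction of the stated inequality.
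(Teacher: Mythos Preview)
Your outline does not match the paper's argument, and as it stands it is not a proof: the crucial steps are asserted rather than carried out, and the mechanism you invoke for the constant is not the one that actually produces $\tfrac{2}{27}(15-\pi^{2})$.

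The paper never decomposes convex $4$-tuples by their distribution over the wings. Instead it proves a single global lower bound on the number of $(\le k)$-sets of a $3$-decomposable $n$-point set (this is Theorem~\ref{thm:mainksets}), namely $\chi_{\le k}(P)\ge B(k,n)$ where
\[
B(k,n)=3\binom{k+1}{2}+3\binom{k+1-n/3}{2}+3\sum_{j=2}^{s-1} j(j+1)\binom{k+1-c_{j}n}{2},\qquad c_{j}=\tfrac{1}{2}-\tfrac{1}{3j(j+1)},
\]
and then feeds this directly into the identity $\rcr(P)=\sum_{k}(n-2k-1)\chi_{\le k}(P)+\Theta(n^{3})$. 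The decomposition by colour happens at the level of \emph{$(\le k)$-critical transpositions} in the allowable sequence of $P$, not at the level of $4$-tuples: one computes the bichromatic transpositions exactly (they are forced by the $3$-decomposable structure) and bounds the monochromatic ones via a digraph extremal argument. The constant $\tfrac{2}{27}(15-\pi^{2})$ then drops out of $\sum_{j\ge 2}\frac{1}{j^{3}(j+1)^{3}}=\tfrac{79}{8}-\pi^{2}$, which is specific to the thresholds $c_{j}$ in $B(k,n)$, not to the Basel series $\sum k^{-2}$ in the generic way you suggest.

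Your plan has two concrete gaps. First, you give no actual lower bounds for the mixed types: for $(2,2,0)$, linear separation of two wings does \emph{not} make every such $4$-tuple convex (a point of one wing can still lie inside the triangle spanned by the other three), and for $(2,1,1)$ you do not say what the ``tripartite variant'' is or why it exists. Second, even granting unspecified bounds for each type, there is no indication of how summing them recovers the precise coefficient $\tfrac{2}{27}(15-\pi^{2})$; the per-wing application of the standard $\chi_{\le k}\ge 3\binom{k+1}{2}$ bound yields only the generic constant $3/8$ scaled by powers of $1/3$, which is far too weak. The entire improvement over $3/8$ in the paper comes from the extra terms in $B(k,n)$ for $k>n/3$, and these have no counterpart in your sketch.
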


Recall that a $(\leq k)$-\emph{set} of a point set $P$ is a subset of $P$ with at most $k$ elements that can be separated from the rest of $P$ by a
straight line. The number ${\chi _{\leq k}(P)}$ of $(\leq k)$-sets of $P$ is a parameter of independent interest in discrete geometry \cite{bmp}. In
Section \ref{sec:proofmain}, we prove Theorem \ref{thm:main} making use of the close relationship between rectilinear crossing numbers and $(\leq k)$-sets, unveiled independently by \'{A}brego and Fern\'{a}ndez-Merchant \cite{af} and by Lov\'{a}sz et al. \cite{lovasz}:

\begin{equation}
\rcr(P)=\sum_{k=1}^{(n-2)/2}(n-2k-1){\chi _{\leq k}(P)}+\Theta (n^{3}).
\label{eq:aflov}
\end{equation}

Besides Equation ~\ref{eq:aflov}, the main ingredient in the proof of Theorem \ref{thm:main} is the following bound for the number of $(\leq k)$-sets in $3$-decomposable point sets, whose proof appears in Section \ref{sec:proofmainksets}.

\begin{theorem}
\label{thm:mainksets} Let $P$ be a $3$-decomposable set of $n$ points, where $n$ is a multiple of $3$, and let $k<n/2$. Then
\begin{equation*}
{\chi _{\leq k}(P)}\geq B\left( k,n\right) ,
\end{equation*}
where%
\begin{equation}
B\left( k,n\right) {:}=3\binom{k+1}{2}+3\binom{k+1-n/3}{2}
+3\sum_{j=2}^{s-1}j(j+1)\binom{k+1-c_{j}n}{2},  \label{eq:seriesbinomials}
\end{equation}
$c_{j}:=\frac{1}{2}-\frac{1}{3j(j+1)}$, and $s:=s(k,n)$ is the unique integer such that
$\binom{s}{2}<\frac{n}{3\left( n-2k-1\right) }\leq \binom{s+1}{2}$.
\end{theorem}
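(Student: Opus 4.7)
My plan is to lower bound $\chi_{\leq k}(P)$ by producing explicit $(\leq k)$-sets organized by the direction of a separating line and carefully exploiting the projection structure of the $3$-decomposition. Write $P=A\cup B\cup C$ for the $3$-decomposition with $|A|=|B|=|C|=n/3$, let $T$ be the enclosing triangle, and let $\vec u_1,\vec u_2,\vec u_3$ be the outward unit normals to the sides of $T$, placed (after an inessential rotation) at angles $0,\,2\pi/3,\,4\pi/3$. The defining property of $3$-decomposability is that along each $\vec u_i$ one of $A,B,C$ projects to the middle of the other two, with the ``middle wing'' rotating cyclically with $i$; this is the input I will mine repeatedly.

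The plan is to assemble three disjoint families of $(\leq k)$-sets, one per block of terms in $B(k,n)$. Stage 1 gives the baseline $3\binom{k+1}{2}$ from the standard Lov\'asz-lemma lower bound for $\chi_{\leq k}$ on an arbitrary point set in general position, with witnesses whose separating lines are nearly perpendicular to the three canonical directions $\vec u_i$. Stage 2 gives $3\binom{k+1-n/3}{2}$: for each $\vec u_i$ the middle wing, of size $n/3$, is trapped in the interior of the projection, so the ``small side'' of any $(\leq k)$-set whose separating line is nearly perpendicular to $\vec u_i$ and that avoids the middle wing must be drawn from a pool of only $2n/3$ points rather than $n$. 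The same pigeonhole counting that produced $\binom{k+1}{2}$ sets in Stage 1 therefore produces $\binom{k+1-n/3}{2}$ additional ones per canonical direction, and the $3$-fold symmetry gives the factor $3$.

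Stage 3 handles the higher corrections $\sum_{j=2}^{s-1} 3j(j+1)\binom{k+1-c_j n}{2}$. Here I would partition the arc of separating-line directions between two consecutive canonical directions into sectors indexed by a ``level'' $j\ge 2$, in such a way that in the $j$-th sector a rotating line is geometrically forced to have at least $c_j n$ points on one side, and the sector occurs with multiplicity $j(j+1)$ per canonical arc. Each such occurrence then contributes $\binom{k+1-c_j n}{2}$ new $(\leq k)$-sets by the Stage 2 argument, and the $3$-fold symmetry of the $3$-decomposition produces $3j(j+1)\binom{k+1-c_j n}{2}$ in total. The cutoff $s$ is the first level at which $c_j n\ge k+1$ and the binomial vanishes; the condition $\binom{s}{2}<\tfrac{n}{3(n-2k-1)}\le\binom{s+1}{2}$ in the statement is exactly the algebraic form of that vanishing.

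The principal obstacle will be Stage 3: pinning down the precise thresholds $c_j=\tfrac{1}{2}-\tfrac{1}{3j(j+1)}$ together with the multiplicities $j(j+1)$. The telescoping increments $c_j-c_{j-1}=\tfrac{2}{3j(j-1)(j+1)}$ strongly suggest a recursive subdivision of the ``forced points on one side'' count as the separating direction rotates past critical angles, but verifying that each such subdivision is produced by all three projection orderings simultaneously, and correctly counting how many combinatorial copies of each sector arise, will require careful geometric bookkeeping. I also expect that handling degenerate configurations---critical angles coinciding, or three projections becoming almost collinear in a way that merges two levels---will be the most delicate technical step and the one most likely to require a separate combinatorial lemma.
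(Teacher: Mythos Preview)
Your Stages~1 and~2 recover the Lov\'asz et al.\ bound $3\binom{k+1}{2}$ and the Aichholzer--Garc\'{\i}a--Orden--Ramos correction $3\binom{k+1-n/3}{2}$, both of which hold for \emph{arbitrary} point sets in general position. The entire new content of the theorem is the sum $\sum_{j=2}^{s-1}3j(j+1)\binom{k+1-c_j n}{2}$, and this is precisely your Stage~3, for which you offer only a hoped-for sector decomposition of the circle of directions with unspecified critical angles and unspecified multiplicities. You flag this as the principal obstacle, and it is a genuine gap: there is no evident geometric mechanism by which a rotating-line sweep would produce the thresholds $c_j=\tfrac12-\tfrac{1}{3j(j+1)}$ together with the weights $j(j+1)$, and your proposal gives none.

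The paper's route is structurally different. It passes to the circular sequence $\Pi$ of $P$, where $(\le k)$-sets become $(\le k)$-critical transpositions, and splits these into \emph{bichromatic} (between two wings) and \emph{monochromatic} (within a wing). The bichromatic count is computed \emph{exactly} from $3$-decomposability: it equals $3\binom{k+1}{2}$ for $k\le n/3$ and $3\binom{n/3+1}{2}+(k-n/3)n$ for $n/3<k<n/2$. The monochromatic contribution is bounded below by bounding \emph{above} the number of $aa$-transpositions that occur in the $k$-center (positions $k{+}1,\dots,n{-}k$). Since each $a_i$ must traverse the center from position $i$ to position $n{-}i{+}1$, the number of center $aa$-transpositions in which $a_i$ moves right is at most $(n-2k-1)$ plus the number in which it moves left. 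This encodes the center $aa$-transpositions as a digraph on $\{1,\dots,n/3\}$ with the constraint $[i]^+\le (n-2k-1)+[i]^-$, and the maximum number of edges under this constraint is attained by an explicit extremal digraph $D_0(n/3,\,n-2k-1)$ from Balogh--Salazar. The thresholds $c_j$ and the weights $j(j+1)$ are not geometric: they fall out of the exact edge count of $D_0$, whose indegree sequence is governed by the triangular-number function $S_m(i)$ (the unique integer with $\binom{S_m(i)}{2}<i/m\le\binom{S_m(i)+1}{2}$). The parameter $s$ in the statement is exactly $S_{n-2k-1}(n/3)$, not the vanishing threshold of the binomial as you guessed.

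In short, your plan correctly locates the difficulty but supplies no tool to resolve it; the actual argument replaces the rotational sweep you envision by an extremal digraph problem whose solution carries the arithmetic you are trying to explain.
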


(In case $r$ is not an integer, we use the formal definition $\binom{r}{2}=\frac{r\left( r-1\right) }{2}$. Also, by convention, $\binom{r}{2}=0$
if $r<2 $.)

To improve the \emph{general} upper bound on the number of crossings, we developed a procedure that grows a \emph{base drawing} of a given $K_{m}$ into a so called \emph{augmenting drawing} of $K_{n}$ for some $n>m$. This method is of interest by itself as it preserves certain structural properties that guarantee a relatively small number of crossings in the augmenting drawing. It refines previous constructions by Brodsky et al.~\cite{brodsky}, Aichholzer et al.~\cite{aak}, and \'Abrego and Fern\'andez-Merchant~\cite{af1}. Section~\ref{gencon} is devoted to the description and analysis of our replacing-by-clusters construction. Iterating this procedure, using as initial base drawing any complete geometric graph with an \emph{odd} number of points, yields the following result proved in Section~\ref{doubling}.

\begin{theorem}\label{baseoddite}
If $P$ is an $m$-element point set in general position, with $m$ odd, then
\begin{equation}\label{eq:recursion}
\rcr(K_n) \le \frac{24\rcr(P) + 3m^3 - 7m^2 + (30/7)m}{m^4} \binom{n}{4} + \Theta(n^3).
\end{equation}
\end{theorem}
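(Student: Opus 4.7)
The idea is to iterate the replacing-by-clusters construction of Section~\ref{gencon}, using $P$ itself as the cluster template at every stage. I set $P_0 := P$ and, for $t \ge 1$, obtain $P_t$ from $P_{t-1}$ by replacing each point of $P_{t-1}$ with a tiny, similarly oriented scaled copy of $P$. Then $|P_t| = m^{t+1}$ is odd for every $t \ge 0$, so the structural hypotheses required by the construction of Section~\ref{gencon} are preserved across iterations.

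The heart of the argument is a one-step recursion. Every crossing of $P_t$ corresponds to a convex quadrilateral whose four vertices lie in one, two, three, or four of the $m^t$ clusters that form $P_t$. Because the clusters are small enough to behave as ``fat points'' relative to each other, the four-cluster quadruples contribute exactly $m^4\,\rcr(P_{t-1})$ crossings, while the one-cluster case contributes $m^t\,\rcr(P)$. The two- and three-cluster counts are controlled, via the analysis of Section~\ref{gencon} and identity~(\ref{eq:aflov}), by an expression of the form $R_3(m)(m^t)^3 + R_2(m)(m^t)^2$ for explicit polynomials $R_2,R_3$ coming from the $(\le k)$-set structure of $P$. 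This gives
$$
\rcr(P_t) \;\le\; m^4\,\rcr(P_{t-1}) + R_3(m)(m^t)^3 + R_2(m)(m^t)^2 + m^t\,\rcr(P).
$$
Dividing by $\binom{m^{t+1}}{4}$ turns this into a contracting recursion for the crossing density $\rho_t := \rcr(P_t)/\binom{m^{t+1}}{4}$; telescoping and summing the resulting convergent geometric series in $t$ yields
$$
\lim_{t \to \infty} \rho_t \;\le\; \frac{24\,\rcr(P) + 3m^3 - 7m^2 + (30/7)m}{m^4}.
$$
The accumulated error terms sum to $\Theta((m^{t+1})^3) = \Theta(n^3)$, establishing~(\ref{eq:recursion}) for $n$ of the form $m^{t+1}$. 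For arbitrary $n$, pick the smallest $t$ with $m^{t+1} \ge n$ and invoke the standard averaging argument (pick a random $n$-subset $B\subset P_t$ and use linearity of expectation) to get a subset with $\rcr(B) \le \rcr(P_t)\,\binom{n}{4}/\binom{m^{t+1}}{4}$; since $m^{t+1}<mn$, the same leading coefficient bounds $\rcr(B)$ up to an additional $\Theta(n^3)$ error.

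The main obstacle is the combinatorial bookkeeping needed to extract the specific numerator $24\,\rcr(P) + 3m^3 - 7m^2 + (30/7)m$. The rational constants $3$, $-7$, and $30/7$ emerge only after recasting the two- and three-cluster crossing counts in terms of $(\le k)$-sets of $P$ through identity~(\ref{eq:aflov}), resumming, and using that the odd parity of $|P_t|=m^{t+1}$ at every iteration pins down the ``median'' lines required by the replacing-by-clusters procedure; this is what keeps the inter-cluster counts exact (rather than approximate) through every level of the recursion. Once this accounting is in place, the stated asymptotic bound follows directly from the convergence of the geometric recursion.
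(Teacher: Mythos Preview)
Your iteration replaces each point of $P_{t-1}$ by a copy of $P$, i.e.\ uses clusters of size $m$. This is a legitimate instance of the construction in Section~\ref{gencon} (simple pre-halving lines exist because $|P_{t-1}|=m^t$ is odd), and it does give \emph{some} upper bound of the form $c(P)\binom{n}{4}+\Theta(n^3)$. But it does \emph{not} give the specific constant in~(\ref{eq:recursion}). With all $s_i=m$ and $N:=m^t$, the Type~I--V counts from Section~\ref{gencon} yield
\[
\square(P_t)=m^4\square(P_{t-1})+\tfrac{m^3(m-1)}{8}N(N-1)(N-3)+\tfrac{m^2(m-1)^2}{8}N(N-1)+\tfrac{m^2(m-1)(m-2)}{12}N(N-1)+N\,\square(P),
\]
and telescoping $\square(P_t)/m^{4(t+1)}$ produces geometric sums $\sum_{j\ge1}m^{-j}=\tfrac{1}{m-1}$, $\sum m^{-2j}=\tfrac{1}{m^2-1}$, $\sum m^{-3j}=\tfrac{1}{m^3-1}$. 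The first two give exactly $\tfrac{1}{8m}$ and $-\tfrac{7}{24m^2}$, but the third gives $\tfrac{4m+7}{24m^2(m^2+m+1)}+\tfrac{\square(P)}{m^4(m^3-1)}$, which is \emph{not} $\tfrac{5}{28m^3}$ (check $m=3$: you get $19/2808\approx0.00677$ versus $5/756\approx0.00661$). The discrepancy comes from Type~IV and Type~V crossings, which are absent when clusters have size $2$ but are present, and feed $\binom{m}{3}$ and $\square(P)$ back in at every level, when clusters have size $m$.

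The paper's proof instead \emph{doubles}: it applies Proposition~\ref{baseodd} once to get a $2m$-point set with a halving matching, then iterates Lemma~3 of~\cite{af1}. Both steps use size-$2$ clusters, giving the uniform recursion $\square(P_k)=16\,\square(P_{k-1})+M_{k-1}^3-\tfrac{7}{2}M_{k-1}^2+\tfrac{5}{2}M_{k-1}$ with $M_k=2^k m$. Now the geometric sums are $\sum 2^{-j}=2$, $\sum 4^{-j}=4/3$, $\sum 8^{-j}=8/7$, and these are precisely what produce the coefficients $3$, $-7$, $30/7$. Your invocation of identity~(\ref{eq:aflov}) is a red herring: that identity plays no role in this argument; the constants come purely from the doubling arithmetic.
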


This inequality was previously known (Theorem 2 in \cite{af1}) only
for drawings with an \emph{even} number of points, and with a base
drawing that satisfies a certain ``halving property''. The existence
of a point set satisfaying such halving property together this theorem
constitute the best tools available to obtain upper bounds for the
rectilinear crossing number constant. In fact, we have produced a
geometric drawing of $K_{315}$ with $152210640$ 
crossings (see Section~\ref{symdra}), that used as the base drawing in Theorem~\ref{baseoddite}, yields the best upper bound currently known for the {\em rectilinear crossing number constant} $q_*:=\lim_{n\to\infty}{\rcr(K_n)}/{\binom{n}{4}}$.

\begin{theorem}\label{bestupper}
The rectilinear crossing number constant $q_*$ satisfies $q_* \leq \frac{83247328}{218791125}  < 0.380488$.
\end{theorem}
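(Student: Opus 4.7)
The plan is to apply Theorem~\ref{baseoddite} with the specific base drawing constructed in Section~\ref{symdra}, namely the $3$-symmetric and $3$-decomposable geometric drawing of $K_{315}$ with exactly $152210640$ crossings. Since $315$ is odd, this drawing meets the hypothesis of Theorem~\ref{baseoddite}, and substituting $m=315$ and $\rcr(P)=152210640$ into~(\ref{eq:recursion}) produces a concrete upper bound of the form $c\binom{n}{4}+\Theta(n^3)$ for $\rcr(K_n)$, from which the bound on $q_*$ follows by dividing by $\binom{n}{4}$ and letting $n\to\infty$.

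The bulk of the proof is then a direct arithmetic verification that the constant $c$ obtained from~(\ref{eq:recursion}) equals the claimed value. Concretely, I would compute the numerator
\[
24\cdot 152210640 + 3\cdot 315^3 - 7\cdot 315^2 + \tfrac{30}{7}\cdot 315
\]
and the denominator $315^4$, clear the factor $7$ in the term $\tfrac{30}{7}\cdot 315 = 30\cdot 45 = 1350$ so that everything is integral, and then cancel the common factor $45$ between numerator and denominator to obtain exactly $83247328/218791125$. A short numerical check (e.g.\ noting that $83247328/218791125 < 83247328/218791104$ or just dividing) then confirms the strict inequality $83247328/218791125 < 0.380488$.

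The step that is genuinely substantive is not performed here at all but rather in Section~\ref{symdra}: the actual construction and crossing count of the $K_{315}$ base drawing. In the present theorem one simply imports that result as a black box and plugs it into the recursion of Theorem~\ref{baseoddite}. Thus the only potential pitfall in this write-up is the arithmetic (in particular ensuring that the $1/7$ in~(\ref{eq:recursion}) combines cleanly with $m=315$, which it does because $7\mid 315$), and making sure one is citing Theorem~\ref{baseoddite} for an odd base, which is precisely the case $m=315$ covers.
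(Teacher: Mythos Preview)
Your proposal is correct and matches the paper's own argument exactly: the paper derives Theorem~\ref{bestupper} by plugging $m=315$ and $\rcr(P)=152210640$ (from the $K_{315}$ drawing of Section~\ref{symdra}) into the bound~(\ref{qstar}) that follows from Theorem~\ref{baseoddite}, and the arithmetic you outline (including the cancellation of the common factor $45$) yields precisely $83247328/218791125$. The only minor remark is that your parenthetical suggestion ``$83247328/218791125 < 83247328/218791104$'' does not by itself establish the decimal bound, but as you note, a direct division suffices.
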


The previously best known general bounds for the rectilinear crossing number of $K_{n}$ are $0.379972\binom{n}{4}+\Theta (n^{3}) <
\rcr(K_n) < 0.38054415\binom{n}{4}+\Theta (n^{3})$; see\ \cite{central} for the lower bound, and \cite{af1} with a drawing of $K_{90}$ with $951526$ crossings by Aichholzer for the upper bound. Thus the general upper bound in Theorem~\ref{bestupper}, together with the lower bound given by Theorem~\ref{thm:main}, closes this gap by close to 20\%, under the quite feasible assumption of $3$-decomposability. In fact, we strongly believe that:

\begin{conjecture}\label{conjalldecomp}
For each positive integer $n$ multiple of $3$, all optimal rectilinear drawings of $K_{n}$ are $3$-decomposable.
\end{conjecture}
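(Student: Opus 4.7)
The plan is to exploit the $(\le k)$-set identity (\ref{eq:aflov}), which shifts the problem from minimizing $\rcr(P)$ directly to minimizing the weighted sum $\sum_{k=1}^{(n-2)/2}(n-2k-1)\chi_{\le k}(P)$ up to $\Theta(n^3)$. Combined with Theorem \ref{thm:mainksets}, which supplies the tight lower bound $B(k,n)$ in the $3$-decomposable regime, this gives a natural candidate for the global minimizer of that sum. The first step I would attempt is a rigidity companion to Theorem \ref{thm:mainksets}: namely, that any $n$-point set $P$ in general position (with $n$ a multiple of $3$) that simultaneously achieves the values $\chi_{\le k}(P)=B(k,n)$ --- or even comes within $o(n^2)$ of them --- over the full range $k<n/2$ must itself be $3$-decomposable. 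This is the structural heart of the argument.

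Assuming such a rigidity statement, the conjecture would follow in two further steps. First, one verifies that $3$-decomposable optimal constructions (such as the ones produced via the replacing-by-clusters method of Section~\ref{gencon}) asymptotically saturate the $B(k,n)$ bounds, so that the minimum of $\rcr(P)$ and the minimum of the weighted $(\le k)$-set sum coincide up to $\Theta(n^3)$. Second, suppose $P$ realizes $\rcr(K_n)$: because (\ref{eq:aflov}) is a sum with positive weights and each $\chi_{\le k}(P)$ is bounded below by $B(k,n)$ for $3$-decomposable sets, optimality forces $\chi_{\le k}(P)$ to hit these lower bounds essentially everywhere, and rigidity then yields the claimed $3$-decomposability of $P$. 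Small values of $n$ (say $n\le 27$, where $\rcr(K_n)$ is known exactly) could be dispatched separately using the Aichholzer--Krasser order-type databases, providing a base for any induction or compactness step.

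The main obstacle, by a wide margin, is the rigidity statement. Theorem \ref{thm:mainksets} is proved by exhibiting and counting in a specific $3$-decomposable minimizer, but its inequalities do not visibly encode the geometric conditions defining $3$-decomposability --- the existence of an enclosing triangle $T$, the balanced tripartition $A,B,C$, and the prescribed between-ness pattern of the projections onto the three sides of $T$. One would likely need a new kind of stability argument that converts near-equality in the $(\le k)$-set bounds into an actual geometric partition, perhaps by identifying the three candidate wings through the directions in which the $\chi_{\le k}$ counts first saturate. A purely combinatorial attack seems insufficient; some continuous or variational device singling out the $3$-fold direction is probably required. And because the conjecture demands the \emph{exact} structure for every admissible $n$ rather than an asymptotic statement, a local exchange argument --- showing that any deviation from $3$-decomposability in a purportedly optimal $P$ admits a crossing-strictly-decreasing perturbation --- would still be needed to close the gap between approximate and exact $3$-decomposability.
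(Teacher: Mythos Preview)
The statement you are addressing is \emph{Conjecture~\ref{conjalldecomp}}, not a theorem: the paper offers no proof, only supporting evidence (all known optimal drawings for $n\le 27$ are $3$-decomposable, and the remark after the conjecture that point sets minimizing $\chi_{\le k}$ for every $k\le n/3$ are automatically $3$-decomposable). So there is no proof in the paper to compare your proposal against; the conjecture is posed as open.

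Beyond the rigidity obstacle you already flag, your outline contains a genuine circularity. In your second step you write that ``each $\chi_{\le k}(P)$ is bounded below by $B(k,n)$ for $3$-decomposable sets, [so] optimality forces $\chi_{\le k}(P)$ to hit these lower bounds essentially everywhere.'' But Theorem~\ref{thm:mainksets} asserts $\chi_{\le k}(P)\ge B(k,n)$ only for \emph{$3$-decomposable} $P$; for an arbitrary optimal $P$ --- whose $3$-decomposability is precisely what you are trying to establish --- you have no such termwise lower bound. Knowing that the weighted sum $\sum_k (n-2k-1)\chi_{\le k}(P)$ is at most the corresponding sum with $B(k,n)$ (because $P$ cannot do worse than the best $3$-decomposable construction) does not pin down the individual $\chi_{\le k}(P)$: the weights are positive, but without a matching termwise lower bound the terms can trade off. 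To feed your rigidity step you would first need $\chi_{\le k}(P)\ge B(k,n)$ (or nearly so) for \emph{all} $P$ in general position. That is strictly stronger than the best general $(\le k)$-set bounds known: it would already lift the general lower bound on $\rcr(K_n)$ from the $0.379972\binom{n}{4}$ cited in the introduction to the $0.380029\binom{n}{4}$ of Theorem~\ref{thm:main}, which the paper proves only under the $3$-decomposability hypothesis. So the step linking optimality of $P$ to near-saturation of the $B(k,n)$ bounds presupposes a result at least as hard as --- and arguably equivalent in strength to --- the conjecture itself.
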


The reasons for this belief go beyond the evidence of all known optimal drawings: the underlying point sets of all the best crossing-wise known drawings of $K_n$ happen to minimize the number of $(\le k)$-sets for every $k \le n/3$, and a point set with this property is in turn $3$-decomposable (an equivalent form of this statement appears in~\cite{agor2}; see also~\cite{agor3}).

Another strong feeling that we have is about the symmetry.  
We note that {\em none} of the explicit best known constructions, prior to this paper, is $3$-symmetric (except for some very small values of $n$). Yet, they resemble a $3$-symmetric set. This hints to the existence of equally good drawings of $K_n$ that are $3$-symmetric (which seems to be a wide spread belief). In this context we believe that:

\begin{conjecture}\label{conjallsym}
For each positive integer $n$ multiple of $3$, there is an optimal geometric drawing of $K_n$ that is $3$-symmetric.
\end{conjecture}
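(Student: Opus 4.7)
The plan is to attack Conjecture~\ref{conjallsym} by a symmetrization argument bootstrapped on Conjecture~\ref{conjalldecomp}. Starting from an optimal drawing $D$ whose point set $P$ is $3$-decomposable (by Conjecture~\ref{conjalldecomp}) with wings $A$, $B$, $C$ and enclosing triangle $T$, I would try to show that $P$ can be replaced by three $120^{\circ}$-rotated copies of a single carefully chosen wing $W \in \{A,B,C\}$, placed in the sectors previously occupied by $A$, $B$, $C$, so that the resulting $3$-symmetric point set $P^{*}$ satisfies $\rcr(P^{*}) \le \rcr(P)$. The argument would then be completed by noting that $P^{*}$ can be perturbed inside its symmetry orbit to guarantee general position without increasing $\rcr$, and that it automatically inherits $3$-decomposability.

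The first step is to split the convex quadrilaterals counted by $\rcr$ into three classes according to how their vertices distribute among the wings: monochromatic (all four in one wing), bichromatic (two wings represented), and trichromatic (all three wings represented). For the monochromatic class, an averaging argument is immediate: taking $W$ to minimize the internal crossing count over $\{A,B,C\}$ yields a replacement with monochromatic contribution at most the original. For the bichromatic class, I would invoke the $(\le k)$-set identity~\eqref{eq:aflov} together with Theorem~\ref{thm:mainksets} to re-express the count as a weighted sum of $(\le k)$-set counts inside each pair of wings; the projection constraints imposed by $3$-decomposability then allow a standard rearrangement/convexity step to bound the bichromatic contribution of $P$ from below by that of two identical rotated copies of $W$.

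The main obstacle is the trichromatic class, because those quadrilaterals genuinely couple all three wings and cannot be handled wing-by-wing. Here the structural constraint of $3$-decomposability is essential: the prescribed orderings $A|B|C$, $B|A|C$, $C|A|B$ of the projections of $P$ onto the sides of $T$ heavily restrict which balanced $4$-tuples form convex quadrilaterals. The heart of the proof would be a combinatorial inequality stating that, under these three projection constraints, the number of trichromatic convex quadrilaterals is minimized when the three wings are mutually congruent via $120^{\circ}$ rotations. One promising route is to recast this as a statement about $3$-coloured halving lines, where a symmetric configuration maximizes the number of lines that separate a balanced triple, and then to combine it with~\eqref{eq:aflov} applied separately to the trichromatic piece.

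Finally, because the target is an exact optimum rather than an asymptotic one, we cannot afford to lose the $\Theta(n^{3})$ slack in~\eqref{eq:aflov}. This will require tracking the small-$k$ terms in~\eqref{eq:seriesbinomials} exactly, or perturbing $P^{*}$ within its symmetry orbit so as to realize the extremal bound of Theorem~\ref{thm:mainksets} with equality for every $k<n/2$. I expect the trichromatic inequality to be the single hardest step; a reasonable fallback is to first establish the weaker asymptotic statement, namely that the minimum of $\rcr(P)/\binom{n}{4}$ over $3$-symmetric $n$-point sets tends to $q_{*}$ as $n \to \infty$ through multiples of $3$, and then attempt to promote it to the exact statement using the rigidity forced by $3$-decomposability together with the explicit constructions of Section~\ref{symdra}.
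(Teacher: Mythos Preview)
This statement is labeled a \emph{conjecture} in the paper and is not proved there; the paper only offers supporting evidence, namely explicit $3$-symmetric, $3$-decomposable drawings matching or improving the best known crossing numbers for all $n\le 99$ (and $n=315$), together with the general upper bound of Theorem~\ref{bestupper}. There is therefore no ``paper's own proof'' to compare against.

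As for your plan itself, note two structural gaps. First, you bootstrap on Conjecture~\ref{conjalldecomp}, which is also open; so even a complete execution would yield only a conditional result. Second, your symmetrization step for the trichromatic quadrilaterals is the entire content of the problem, and nothing in Theorem~\ref{thm:mainksets} or identity~\eqref{eq:aflov} controls that count: those tools bound $\chi_{\le k}(P)$ and hence $\rcr(P)$ from below, they do not compare two specific configurations, and the $\Theta(n^3)$ slack you flag is genuinely fatal for an exact statement. Your bichromatic ``rearrangement/convexity'' step is also not justified---replacing $B$ and $C$ by rotated copies of $A$ changes which pairs are separated by a given line through $A$, so the bichromatic count need not move monotonically. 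In short, what you have is a reasonable wish list for a symmetrization approach, but none of the inequalities it rests on are established, and the paper does not claim to establish them either.
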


Our main findings back up Conjectures \ref{conjalldecomp} and
\ref{conjallsym}. Indeed, we have found, for every $n$ multiple of
$3$, a $3$-decomposable and $3$-symmetric geometric drawing of $K_n$
with the fewest number of crossings known to date. Thus, in
particular, for each $n$ multiple of $3$ for which the exact value of
$\rcr(K_n)$ is known (that is, $n \le 27$), we have found an optimal
geometric drawing that is $3$-decomposable and $3$-symmetric. These
drawings are described in Section~\ref{symdra}. Some were obtained
using heuristic methods based of previously known constructions; the
rest were obtained applying our replacing-by-clusters construction
from Section~\ref{gencon}, with base drawings of $K_{30}$ or $K_{51}$.
In fact, this drawing of $K_{315}$ is 
obtained from a base drawing of $K_{51}$, 
and it is the initial base drawing  used to establish Theorem \ref{bestupper}.

%%%%%%%%%%%%%%%%%%%%%%%%%%%%%%%%%%%%%%%%%%%%%%%%%%%%%%%%%%%%%%%%%%%%%%%%%%%%%%%%%%

\section{Proof of Theorem \protect\ref{thm:main}\label{sec:proofmain}}

Let $P$ be a $3$-decomposable set of $n$ points in general position.
Combining Theorem~\ref{thm:mainksets} and Equation \ref{eq:aflov}, and
noting that the $-1$ in the factor $n-2k-1$ only contributes to smaller
order terms, we obtain

\begin{align*}
{\overline{\hbox{\rm cr}}}(P)& {\geq }\sum_{k=1}^{(n-2)/2}\left( n-2k\right)
B(k,n){+}\Theta (n^{3}) \\
& =36\binom{n}{4}\left( \sum_{k=1}^{(n-2)/2}\frac{1}{n}\left( 1-2\left(
\frac{k}{n}\right) \right) \left( \frac{k}{n}\right)
^{2}+\sum_{k=n/3}^{(n-2)/2}\frac{1}{n}\left( 1-2\left( \frac{k}{n}\right)
\right) \left( \frac{k}{n}-\frac{1}{3}\right) ^{2}\right. \\
& \ \hspace{1in}+\left. \sum_{k=1}^{(n-2)/2}\sum_{j=2}^{s-1}j(j+1)\frac{1}{%
n}\left( 1-2\left( \frac{k}{n}\right) \right) \left( \frac{k}{n}%
-c_{j}\right) ^{2}\right) +\Theta (n^{3})\text{,}
\end{align*}
since $j\leq s(k,n)-1$ if and only if $k>c_{j}n-1/2$, then%
\begin{align*}
{\overline{\hbox{\rm cr}}}(P)& {\geq }36\binom{n}{4}\left(
\sum_{k=1}^{(n-2)/2}\frac{1}{n}\left( 1-2\left( \frac{k}{n}\right) \right)
\left( \frac{k}{n}\right) ^{2}+\sum_{k=n/3}^{(n-2)/2}\frac{1}{n}\left(
1-2\left( \frac{k}{n}\right) \right) \left( \frac{k}{n}-\frac{1}{3}\right)
^{2}\right. \\
& \ \hspace{1in}+\left. \sum_{j=2}^{\infty }j(j+1)\sum_{c_{j}n-1/2<k\leq
(n-2)/2}\frac{1}{n}\left( 1-2\left( \frac{k}{n}\right) \right) \left( \frac{k%
}{n}-c_{j}\right) ^{2}\right) +\Theta (n^{3}).
\end{align*}
Each of the sums is a Riemann Sum which we estimate using
the corresponding integrals. Note that all the error terms are
bounded by $\Theta (n^{3})$.
\begin{align*}\rcr (P) & \geq 36\binom{n}{4}\left(
\int_{0}^{1/2}(1-2x)x^{2}dx+\int_{1/3}^{1/2}(1-2x)\left( x-\frac{1}{3}%
\right) ^{2}dx\right. \\
& \ \hspace{1.5in}+\left. \sum_{j=2}^{\infty
}j(j+1)\int_{c_{j}}^{1/2}(1-2x)(x-c_{j})dx\right) +\Theta (n^{3}) \\
\ & =\binom{n}{4}\left( \frac{3}{8}+\frac{1}{216}+\frac{2}{27}%
\sum_{j=2}^{\infty }\frac{1}{j^{3}(j+1)^{3}}\right) +\Theta (n^{3}).  \hal
\end{align*}

Since
\begin{equation*}
\sum_{j=2}^{\infty }\frac{1}{j^{3}(j+1)^{3}}=\sum_{j=2}^{\infty }\left(
\frac{1}{j^{3}}-\frac{3}{j^{2}}+\frac{6}{j}-\frac{1}{(j+1)^{3}}-\frac{3}{%
(j+1)^{2}}-\frac{6}{j+1}\right) =\frac{79}{8}-\pi ^{2},
\end{equation*}%
then%
\begin{equation*}
\overline{\hbox{\rm cr}}(P)\geq \frac{2}{27}\left( 15-\pi ^{2}\right) \binom{%
n}{4}+\Theta (n^{3}).
\end{equation*}

%%%%%%%%%%%%%%%%%%%%%%%%%%%%%%%%%%%%%%%%%%%%%%%%%%%%%%%%%%%%%%%%%%%%%%%%%%%%%%%%%%%%%%%%%%%%

\section{Proof of Theorem \protect\ref{thm:mainksets}\label{sec:proofmainksets}}

We follow the approach of allowable sequences. An \emph{allowable sequence} $%
\mathbf{\Pi }$ is a doubly infinite sequence $\ldots \pi _{-1},\pi _{0},\pi
_{1},\ldots $ of permutations of $n$ elements, where consecutive
permutations differ by a transposition of neighboring elements, and $\pi
_{i} $ is the reverse permutation of $\pi _{i+{\binom{n}{2}}}$. Then any
subsequence $\Pi $ of $\binom{n}{2}+1$ consecutive permutations in $\mathbf{%
\Pi }$ contains all necessary information to reconstruct the entire
allowable sequence. $\Pi $ is called a \emph{halfperiod} of $\mathbf{\Pi }$.

Our interest in allowable sequences derives from the fact that all the
combinatorial information of an $n$-point set $P$ can be encoded by an
allowable sequence $\mathbf{\Pi }_{P}$ on the set $P$, called the \emph{%
circular sequence} associated to $P$. A halfperiod $\Pi $ of $\mathbf{\Pi }%
_{P}$ is obtained as follows: Start with a circle $C$ containing $P$ in its
interior, and a tangent directed line $\ell $ to $C$. Project $P$
orthogonally onto $\ell $, and record the order of the points in $P$ on $%
\ell $. This will be the initial permutation $\pi _{0}$ of $\Pi $. (In the
remote case that two point-projections overlap, use a small rotation of $%
\ell $ on $C$.) Now, continuously rotate $\ell $ on $C$ (clockwise) and keep
projecting $P$ orthogonally onto $\ell $. Right after two points overlap in
the projection, say $p$ and $q$, the order of $P$ on $\ell $ will change.
This new order of $P$ on $\ell $ will be $\pi _{1}$. Note that $\pi _{1}$ is
obtained from $\pi _{0}$ by the transposition of $pq$. Continue doing this,
rotating $\ell $ on $C$ and recording the corresponding permutations of $P$,
until completing half a turn on $C$. At this time, the order of $P$ on $\ell
$ will be the reverse than the original. Moreover, exactly $\binom{n}{2}$
transpositions have taken place, one per each pair of points. The only thing
that we need to assume from $P$ for this to be well defined, is that any
two lines joining points in $P$ are not parallel. This can be done by
slightly perturbing the points of $P$ without changing its combinatorial
properties.

It is important to note that most allowable sequence are not circular
sequences. In fact, allowable sequence are in one-to-one correspondence with
generalized configurations of points. We refer the reader to the seminal
work by Goodman and Pollack~\cite{goodmanpollack} for further details.

Observe that if $P$ is $3$-decomposable with partition $A$, $B$, and $C$,
then there is a halfperiod $\Pi =(\pi _{0},\pi _{1},\ldots ,\pi _{\tbinom{n}{%
2}})$ of $\mathbf{\Pi }_{P}$ whose points can be labeled $A=\left\{
a_{1},\ldots ,a_{n/3}\right\} $, $B=\left\{ b_{1},\ldots
,b_{n/3}\right\} $, and $C=\left\{ c_{1},\ldots ,c_{n/3}\right\} $,
so that $\pi _{0}=(a_{1},a_{2},...,a_{n/3},b_{1},b_{2},\ldots
,b_{n/3},c_{1},c_{2},\ldots ,c_{n/3})$, and for some indices
$0<s<t\leq \tbinom{n}{2}$, $\pi _{s+1}$ shows all the $b$-elements
followed by all the $a$-elements followed by all the $c$-elements,
and $\pi _{t+1}$ shows all $b$-elements followed by all the
$c$-elements followed by all the $a$-elements. An allowable
sequence
with a halfperiod satisfying these properties is called $3$-\emph{%
decomposable}, generalizing the definition of $3$-decomposability from
point-sets to allowable sequences.

We have the following definitions and notation for allowable sequences. A
transposition that occurs between elements in sites $i$ and $i+1$ is an $i$-%
\emph{transposition}. For $i\leq n/2$, an $i$-\emph{critical} tranposition
is either an $i$-transposition or an $(n-i)$-transposition, and a $(\leq k)$-%
\emph{critical} transposition is a transposition that is $i$-critical for
some $i\leq k$. If $\Pi $ is a halfperiod, then ${N_{\leq k}(\Pi )}$ denotes
the number of $(\leq k)$-critical transpositions in $\Pi $. When $\mathbf{%
\Pi =\Pi }_{P}$ is a circular sequence associated to a point-set $P$, $(\leq
k)$-critical transpositions in $\mathbf{\Pi }$ correspond to $\left( \leq
k\right) $-sets of $P$. More precisely, if a permutation $\pi $ in $\mathbf{%
\Pi }_{P}$ is obtained by a $k$-transposition (similarly, by an $\left(
n-k\right) $-transposition) the first (similarly, last) $k$ elements in $\pi
$ form a $k$-set. Thus ${\chi _{\leq k}(P)}={N_{\leq k}(\Pi )}$ for any
halfperiod $\Pi $ of $\mathbf{\Pi }_{P}$.

The following theorem generalizes Theorem \ref{thm:mainksets}.

\begin{theorem}
\label{prop:main} Let $\Pi $ be a $3$-decomposable halfperiod on $n$ points,
and let $k<n/2$. Then
\begin{equation*}
{N_{\leq k}(\Pi )}\geq B\left( k,n\right) .
\end{equation*}
\end{theorem}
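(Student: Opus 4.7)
The plan is to decompose $N_{\leq k}(\Pi)$ according to which wings contain the two elements of each critical transposition, and derive separate lower bounds whose sum reproduces $B(k,n)$. Let $\eta_{\leq k}^{{\rm hom}}(\Pi)$ and $\eta_{\leq k}^{{\rm het}}(\Pi)$ denote the numbers of $(\leq k)$-critical transpositions between same-wing and different-wing elements, respectively. The classical lower bound $N_{\leq k}(\Pi)\geq 3\binom{k+1}{2}$, valid for every halfperiod, yields the first term of $B(k,n)$; the structure imposed by $3$-decomposability then forces the remaining terms as additional contributions.

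For the second term $3\binom{k+1-n/3}{2}$, I would exploit the fact that each wing $A$, $B$, $C$ occupies a full interval of $n/3$ consecutive positions in each of $\pi_{0}$, $\pi_{s+1}$, and $\pi_{t+1}$. Consequently each wing, viewed as an $(n/3)$-element subsequence, behaves like a full reversal that is embedded in the middle block $\{n/3+1,\dots,2n/3\}$ during an intermediate stretch of the halfperiod. Applying the classical bound to each such embedded reversal, but with the critical range shifted inward by $n/3$, contributes $\binom{k+1-n/3}{2}$ per wing, and $3\binom{k+1-n/3}{2}$ in total.

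For the series $3\sum_{j=2}^{s-1}j(j+1)\binom{k+1-c_{j}n}{2}$, I would count heterogeneous critical transpositions pair-by-pair. For each pair of wings $(X,Y)$ and each $j\geq 2$, the $3$-decomposability constraints should force at least $j(j+1)$ of the $XY$-transpositions to occur at positions whose distance to the boundary is at most $k+1-c_{j}n$, where $c_{j}=\tfrac{1}{2}-\tfrac{1}{3j(j+1)}$. The specific form of $c_{j}$ should emerge from a counting balance that tracks, for each pair of wings, how far toward the center of $\Pi$ the $j$-th wave of inter-wing passes can be pushed. Summing $\binom{k+1-c_{j}n}{2}$ weighted by $j(j+1)$ and tripled over the three pairs of wings yields the series, with the cutoff $j\leq s-1$ from the definition of $s(k,n)$ ensuring the binomial term is zero for $j\geq s$.

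The main obstacle will be the forcing inequality in the last step: proving that each pair of wings contributes at least $j(j+1)$ heterogeneous critical transpositions in the required deep range. I expect this to go by induction on $j$, with the base case $j=2$ coming directly from the three distinguished permutations $\pi_{0},\pi_{s+1},\pi_{t+1}$, and each inductive step exploiting the fact that an additional coordinated lap of the wings past each other forces exactly the required number of extra deep critical transpositions. Once this is established, assembling the three contributions is routine, since the convention $\binom{r}{2}=r(r-1)/2$ handles the non-integer arguments and the convention $\binom{r}{2}=0$ for $r<2$ cleans up the boundary terms.
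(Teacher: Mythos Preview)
Your decomposition into same-wing (homogeneous, the paper's ``monochromatic'') and different-wing (heterogeneous, ``bichromatic'') transpositions matches the paper's first move, but the attribution of the terms of $B(k,n)$ to these two types is essentially reversed, and this leads to a genuine gap.

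In the paper, the bichromatic count is not a lower bound but an \emph{exact} value (Proposition~\ref{pro:heterogeneous}): for $k\le n/3$ it equals $3\binom{k+1}{2}$, and for $n/3<k<n/2$ it equals $3\binom{n/3+1}{2}+(k-n/3)n$. This is forced purely by the fact that each wing must migrate across each boundary position a prescribed number of times. Consequently there is no slack in the heterogeneous transpositions from which to extract the series $3\sum_{j\ge 2} j(j+1)\binom{k+1-c_j n}{2}$; your plan to get that series from inter-wing ``waves'' cannot succeed, because the number of inter-wing $(\le k)$-critical transpositions is already pinned down and does not have that form.

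All of the nontrivial content of the bound---both the second term $3\binom{k+1-n/3}{2}$ and the full series---therefore comes from the \emph{monochromatic} side, and the paper obtains it by a completely different mechanism than an ``embedded reversal in the middle block''. (Your embedded-reversal picture is also not quite right: only one wing sits in the middle third at $\pi_0$, and the wings do not each execute a clean reversal inside $\{n/3+1,\dots,2n/3\}$.) Instead, for each wing the paper upper-bounds the number of same-wing transpositions that occur in the $k$-center by modelling them as edges of a digraph $D_k$ on $\{1,\dots,n/3\}$ subject to the degree constraint $[i]^+\le\min\{n-2k-1+[i]^-,\,n/3-i\}$, invokes the Balogh--Salazar extremal digraph $D_0(n/3,n-2k-1)$ to bound the edge count, and then shows (Lemma~\ref{cla:theclaim}) by an explicit computation of $\sum[i]^-$ in $D_0$ that this edge count is at most $\tfrac{1}{3}(kn-B(k,n))$. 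The constants $c_j$ and the factors $j(j+1)$ emerge from the arithmetic of the indegree sequence of $D_0$, not from any inductive forcing on pairs of wings. Your proposal is missing this extremal-digraph step, which is the heart of the proof.
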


We devote the rest of this section to the proof of Theorem~\ref{prop:main}.

\subsection{Proof of Theorem~\protect\ref{prop:main}}

Throughout this section, $\Pi =(\pi _{0},\pi _{1},\ldots ,\pi _{\binom{n}{2}%
})$ is a $3$-decomposable halfperiod on $n$ points, with initial permutation
$\pi _{0}=(a_{1},\ldots ,a_{n/3},\ldots ,a_{1},b_{1},\ldots
,b_{n/3},c_{1},\ldots ,c_{n/3})$ and $A=\left\{ a_{1},\ldots
,a_{n/3}\right\} $, $B=\left\{ b_{1},\ldots ,b_{n/3}\right\} $, and $%
C=\left\{ c_{1},\ldots ,c_{n/3}\right\} $.

In order to lower bound the number of $(\leq k)$-critical transpositions in $%
\Pi $, we distinguish two types of transpositions. A transposition is \emph{%
monochromatic }if it occurs between two $a$-elements, between two $b$%
-elements, or between two $c$-elements; otherwise it is called \emph{%
bichromatic}. We let ${N_{\leq k}^{{}mono}(\Pi )}$ (respectively, ${N_{\leq
k}^{{}bi}(\Pi )}$) denote the number of monochromatic (respectively,
bichromatic) $(\leq k)$-critical transpositions in $\Pi $, so that ${N_{\leq
k}(\Pi )}={N_{\leq k}^{{}mono}(\Pi )}+{N_{\leq k}^{{}bi}(\Pi )}.$ We now
bound ${N_{\leq k}^{{}mono}(\Pi )}$ and ${N_{\leq k}^{{}bi}(\Pi )}$
separately.

\subsubsection{Calculating ${N_{\leq k}^{{}bi}(\Pi )}$}

\begin{proposition}
\label{pro:heterogeneous} Let $\Pi $ be a $3$-decomposable halfperiod on $n$
points, and let $k<n/2$. Then
\begin{equation*}
{N_{\leq k}^{{}bi}(\Pi )}=%
\begin{cases}
3\binom{k+1}{2}\text{\quad if $k\leq n/3$,} \\[0.4cm]
3\binom{n/3+1}{2}+(k-n/3)n\text{\quad if $n/3<k<n/2$.}%
\end{cases}%
\end{equation*}
\end{proposition}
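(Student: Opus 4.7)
The plan is to exploit the phase structure imposed by 3-decomposability to count the bichromatic transpositions position-by-position. Set $m = n/3$, and let $s < t$ be the indices from the definition, so that $\pi_{s+1}$ and $\pi_{t+1}$ have the forms $BAC$ and $BCA$ respectively. My first step would be to observe that, because each pair of elements is transposed exactly once in $\Pi$ and because the relative order of any two of the three blocks can reverse at most once, a comparison of the block orders in $\pi_0 = ABC$, $\pi_{s+1} = BAC$, $\pi_{t+1} = BCA$, and $\pi_{\binom{n}{2}} = CBA$ forces all $ab$-transpositions into \emph{phase~1} (between $\pi_0$ and $\pi_{s+1}$), all $ac$-transpositions into \emph{phase~2} (between $\pi_{s+1}$ and $\pi_{t+1}$), and all $bc$-transpositions into \emph{phase~3} (between $\pi_{t+1}$ and $\pi_{\binom{n}{2}}$). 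In particular, throughout phase~1 the $c$-elements remain in positions $\{2m+1, \ldots, n\}$, with analogous statements for the other two phases.

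Next I would count the $ab$-transpositions at each position by a simple monotone invariant. For each position $j$, define $n_A(j, r) = |\{ p \le j : \pi_r(p) \in A \}|$. A transposition at a position different from $j$, or any monochromatic transposition, leaves $n_A(j, \cdot)$ unchanged; and an $ab$-transposition at position exactly $j$ decreases it by $1$. Hence the number of $ab$-transpositions at position $j$ during phase~1 equals $n_A(j, 0) - n_A(j, s+1)$, which one computes to be $\min(j, 2m - j)$ for $1 \le j \le 2m - 1$ and $0$ otherwise. The same invariant method, applied to $n_A$ in phase~2 and to $n_B$ in phase~3, shows that the number of $ac$-transpositions at position $j$ equals $\min(j - m, 3m - j)$ for $m + 1 \le j \le 3m - 1$, and the number of $bc$-transpositions at position $j$ equals $\min(j, 2m - j)$ for $1 \le j \le 2m - 1$.

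Adding the three contributions, the total number $f(j)$ of bichromatic transpositions at position $j$ evaluates to $2j$ for $1 \le j \le m$ and to $3m - j$ for $m + 1 \le j \le 3m - 1$. The proposition then follows by summing $f(j)$ over the $(\le k)$-critical positions $\{1, \ldots, k\} \cup \{n - k, \ldots, n - 1\}$. In the case $k \le n/3$, the left-range sum $\sum_{j=1}^{k} 2j = k(k+1)$ and the right-range sum $\sum_{j = 3m - k}^{3m - 1} (3m - j) = \binom{k+1}{2}$ combine cleanly to $3 \binom{k+1}{2}$. The harder case will be $n/3 < k < n/2$, where the left range straddles the boundary $j = m$ and the right range straddles $j = 2m$; the main obstacle is then a routine but slightly delicate bookkeeping exercise to verify that the resulting two-piece sums telescope to the stated expression $3 \binom{n/3 + 1}{2} + (k - n/3) n$.
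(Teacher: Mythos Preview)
Your argument is correct and takes a genuinely different route from the paper's. The paper never isolates the three phases: it uses only the fact that $A$ is separated from $B\cup C$ in $\pi_0$, observes that every bichromatic transposition involving $A$ moves the $a$-element rightward (since each pair swaps exactly once), and thereby counts the bichromatic $i$-transpositions involving $A$ directly as $\min(i,n/3,n-i)$; then, because $3$-decomposability provides analogous separated permutations for $B$ and for $C$, it multiplies by $3/2$ to get the total. Your approach instead exploits the full block pattern at $\pi_0,\pi_{s+1},\pi_{t+1},\pi_{\binom{n}{2}}$ to confine each bichromatic type to its own phase and then runs the invariant argument three times. Both proofs rest on the same monotone-count idea; the paper's symmetry trick is shorter and avoids the case split in the final summation, while your phase decomposition yields finer structural information (e.g.\ the asymmetric profile $f(j)$) at the cost of more bookkeeping. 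Two small remarks for your write-up: first, make explicit that an $ab$-transposition at position $j$ necessarily has the $a$ on the left (this is what makes $n_A(j,\cdot)$ monotone), which follows because each pair swaps once and $A$ precedes $B$ at time $0$; second, the ``delicate bookkeeping'' in the range $n/3<k<n/2$ disappears if you first combine $f(i)+f(n-i)$, which equals $3i$ for $i\le n/3$ and $n$ for $n/3<i<n/2$, and then sum over $i\le k$.
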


\begin{proof} Each bichromatic transposition is either an $ab$- or an $ac$- or a $bc$-transposition. Since $\Pi $ is $3$-decomposable, $A$ and $B\cup C$ are
separated in $\pi _{0}$. Using only this fact, we compute the number of $i$%
-critical bichromatic transpositions involving $A$, that is, the $ab$- and $%
ac$-transpositions together. This number multiplied by $3/2$ is the total
number of bichromatic $i$-critical transpositions of $\Pi $. This is
because, by definition of $3$-decomposable, there is a permutation $\pi _{s}$%
\ of $\Pi $ where $B$ is separated from $A\cup C$, as well as a permutation $%
\pi _{t}$ where $C$ is separated from $A\cup B$. Thus, multiplying by $3$
counts each $i$-critical bichromatic transposition twice.

For $x\in\{b, c\}$ each $ax$-transposition in $\Pi$ moves the involved $a$ to the right and
the involved $b$ or $c$ to the left. Since $A$ occupies the first $n/3$
positions in $\pi _{0}$, then $A$ must occupy the last $n/3$ positions in $%
\pi _{\binom{n}{2}}$. For each $i\leq n/3$, a bichromatic $i$-transposition
involving $A$, replaces one $a$-element occupying one of the first $i$-positions by a $b$%
- or a $c$-element. This must happen exactly $i$ times in order for $A$ to
leave the first $i$ positions. That is, there are exactly $i$ bichromatic $i$%
-transpositions involving $A$. Similarly, for each $i\geq 2n/3$, there are
exactly $i$ bichromatic $i$-transpositions involving $A$ (each of these
transpositions replaces one $b$- or $c$-element in the last $i$ positions by
an $a$-element). Finally, for $n/3<i<2n/3$, there are exactly $n/3$
bichromatic $i$-transpositions involving $A$, since all elements of $A$ must
leave the region formed by the first $i$ positions. Therefore, the number of
$(\leq k)$-critical bichromatic transpositions is exactly $\sum_{i=1}^{k}3i=3%
\binom{k+1}{2}$ if $k\leq n/3$, and $\sum_{i=1}^{n/3}3i+\sum_{i=n/3}^{k}n=3%
\binom{n/3+1}{2}+(k-n/3)n$ if $n/3<k<n/2$.
\end{proof}

\subsubsection{Bounding ${N_{\leq k}^{{}mono}(\Pi )}$}

A transposition between elements in positions $i$ and $i+1$ with $k<i<n-k$
is called a $\left( >k\right) $\emph{-transposition}. All these
transpositions are said to occur in the $k$\emph{-center} (of $\Pi $). Our
goal is to give a lower bound (Proposition~\ref{pro:boundingnoedges}) for ${%
N_{\leq k}^{{}mono}(\Pi )}$. Each monochromatic transposition is an $aa$- or
$bb$-, or $cc$-transposition. Our approach is to find an upper bound for the
number of $\left( >k\right) $-critical $aa$-, $bb$-, and $cc$%
-transpositions, denoted by ${N_{>k}^{{aa}}(\Pi )}$, ${N_{>k}^{{bb}}(\Pi )}$%
, and ${N_{>k}^{{cc}}(\Pi )}$, respectively. The lower bound for ${N_{\leq
k}^{{}mono}(\Pi )}$ follows from the observation that the number of $(\leq
k) $-critical $aa$-transpositions is exactly $\binom{n/3}{2}-{N_{>k}^{{aa}%
}(\Pi )}$, and similarly for $bb$- and $cc$-transpositions. Thus%
\begin{equation*}
{N_{\leq k}^{{}mono}(\Pi )}=3\binom{n/3}{2}-{N_{>k}^{{aa}}(\Pi )}-{N_{>k}^{{%
bb}}(\Pi )}-{N_{>k}^{{cc}}(\Pi )}.
\end{equation*}%
Again, we bound ${N_{>k}^{{aa}}(\Pi )}$ using only the fact that there is a
permutation where $A$ is separated from $B\cup C$, and thus this bound is
the same for ${N_{>k}^{{bb}}(\Pi )}$ and ${N_{>k}^{{cc}}(\Pi )}$.

It is known that for $k\leq n/3$, the bound ${N_{\leq k}(\Pi )}\geq 3\binom{%
k+1}{2}$ is tight. Since we have shown that there are $3\binom{k+1}{2}$
bichromatic $\left( \leq k\right) $-transposition, we focus on the case $%
n/3<k<n/2$. In this case, let $D_{k}$ be the digraph with vertex set $%
1,2,\ldots ,n/3$, and such that there is a directed edge from $i$ to $j$ if
and only if $i<j$ and the transposition $a_{i}a_{j}$ occurs in the $k$%
-center. Then the number of edges of $D_{k}$ is exactly ${N_{>k}^{{aa}}(\Pi )%
}$.

We now bound the number of edges in $D_{k}$ using the following essential
observation. We denote the outdegree and the indegree of a vertex $v$ in a
digraph by ${[v]^{+}}$ and ${[v]^{-}}$, respectively.

\begin{lemma}
For the graph $D_{k}$,%
\begin{equation}
\left[ i\right] ^{+}\leq \min \{n-2k-1+\left[ i\right]
{^{-}},n/3-i\}. \label{eq:degrees}
\end{equation}
\end{lemma}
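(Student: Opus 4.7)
The plan is to track the trajectory of the element $a_i$ through the halfperiod $\Pi$. Since $\pi_0$ places $a_i$ at position $i$ and $\pi_{\binom{n}{2}}$ is the reverse, $a_i$ ends at position $n+1-i$. Because $i \leq n/3 < k < n/2$, the starting position $i$ lies strictly to the left of the center region $\{k+1,\ldots,n-k\}$, and the ending position $n+1-i$ lies strictly to its right, so $a_i$ must traverse the center region a net of exactly one time from left to right.

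Each transposition involving $a_i$ moves it one site right or one site left. Since $\pi_0$ places $A$ entirely to the left of $B\cup C$ and $\pi_{\binom{n}{2}}$ is its reverse, every $ab$- or $ac$-transposition involving $a_i$ is a right-move of $a_i$. Among $aa$-transpositions, $a_ia_j$ with $j>i$ is a right-move, while $a_ja_i$ with $j<i$ is a left-move. In particular, every left-move of $a_i$ is monochromatic: the center left-moves of $a_i$ are in bijection with the in-neighbors of $i$ in $D_k$, while the monochromatic center right-moves of $a_i$ correspond to its out-neighbors in $D_k$.

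Let $R_c$ and $L_c$ denote the total numbers of right- and left-moves of $a_i$ that occur as center transpositions, i.e., moves of $a_i$ whose two endpoints both lie in $\{k+1,\ldots,n-k\}$. The key step is a flow computation: decompose the trajectory of $a_i$ into maximal segments contained in $\{k+1,\ldots,n-k\}$, and note that each such segment enters from either position $k+1$ or $n-k$ and exits from one of these two positions. Since $a_i$ starts to the left of the region and ends to its right, a simple flow balance shows that the number of segments of type ``enter at $k+1$, exit at $n-k$'' exceeds the number of ``enter at $n-k$, exit at $k+1$'' type by exactly one, while the other two segment types have zero displacement. Summing signed displacements $(\text{exit})-(\text{entry})$ over all segments yields
\begin{equation*}
R_c - L_c = n - 2k - 1.
\end{equation*}
Combining with $L_c = [i]^-$ and $R_c = [i]^+ + R$, where $R\geq 0$ counts the bichromatic right-moves of $a_i$ in the $k$-center, gives $[i]^+ = n-2k-1+[i]^- - R \leq n-2k-1+[i]^-$.

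The remaining bound $[i]^+ \leq n/3 - i$ is immediate, since any out-neighbor of $i$ must be a vertex in $\{i+1,\ldots,n/3\}$. The main obstacle is the segment/flow bookkeeping behind the identity $R_c - L_c = n-2k-1$: one must be careful that entries and exits of the center region are not themselves center transpositions (they are $k$- or $(n-k)$-transpositions), so only the moves strictly inside each segment contribute to $R_c$ and $L_c$. Once this identity is in hand, the rest of the argument is a routine accounting of the four types of moves.
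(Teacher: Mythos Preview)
Your proof is correct and follows essentially the same approach as the paper. Both arguments track the trajectory of $a_i$ through the halfperiod, observe that all left-moves of $a_i$ are monochromatic (so $L_c=[i]^-$), and use the net displacement across the $k$-center to obtain $R_c = n-2k-1+[i]^-$; since $[i]^+$ counts only the monochromatic center right-moves, $[i]^+\le R_c$ follows. Your segment/flow bookkeeping makes the identity $R_c-L_c=n-2k-1$ more explicit than the paper's terser ``forced transpositions plus one extra per left-move'' phrasing, but the underlying idea is identical.
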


\begin{proof}
Clearly, $\left[ i\right] ^{+}\leq n/3-i$ because there are only
$n/3-i$ indices $j>i$. To show that $\left[ i\right] ^{+}\leq
n-2k-1+\left[ i\right] ^{-}$, note that $n-2k-1+\left[ i\right]
^{-}$ is the number of $\left(
>k\right) $-transpositions in which $a_{i}$ moves right, and only $\left[ i%
\right] ^{+}$ of these transpositions involve two $a$-elements. Indeed, $%
\left[ i\right] {^{-}}$ is the number of $\left( >k\right) $-transpositions
involving two $a$-elements in which $a_{i}$ moves backward. There are $%
n-2k-1 $ forced $\left( >k\right) $-transpositions of $a_{i}$: since $a_{i}$
moves from position $i$ to position $n-i+1$, for each $k<j<n-k$ there is at
least one $j$-transposition in which $a_{i}$ moves right. Also, each of the $%
\left[ i\right] {^{-}}$ transpositions in which $a_{i}$ moves left in the $k$%
-center allows an extra transposition in the $k$-center in which $a_{i}$
moves right.
\end{proof}

\begin{proposition}
\label{pro:boundingnoedges} If $\Pi $ is a $3$-decomposable halfperiod on $n$
points, and $n/3<k<n/2$, then%
\begin{equation*}
{N_{\leq k}^{{}mono}(\Pi )}\geq B\left( k,n\right) -3\binom{n/3+1}{2}%
-(k-n/3)n.
\end{equation*}
\end{proposition}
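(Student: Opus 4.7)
The plan is to obtain a common upper bound $U$ for each of $N_{>k}^{aa}(\Pi)$, $N_{>k}^{bb}(\Pi)$, $N_{>k}^{cc}(\Pi)$, exploiting the $3$-decomposable symmetry: the argument for $N_{>k}^{aa}(\Pi)$ will use only the existence in $\Pi$ of a permutation separating $A$ from $B \cup C$, and the analogous statements hold for $B$ and $C$. Combining such a $U$ with the identity $N_{\leq k}^{mono}(\Pi) = 3\binom{n/3}{2} - N_{>k}^{aa}(\Pi) - N_{>k}^{bb}(\Pi) - N_{>k}^{cc}(\Pi)$, Proposition~\ref{pro:heterogeneous}, and the arithmetic identity $3\binom{n/3}{2} + 3\binom{n/3+1}{2} = n^2/3$, the conclusion reduces to
\[
3U \leq kn - B(k,n).
\]

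To produce such a $U$, I would work directly with the digraph $D_k$ introduced before the statement. Write $m := n/3$ and $d := n - 2k - 1$, so that $N_{>k}^{aa}(\Pi) = |E(D_k)|$ and (\ref{eq:degrees}) reads $[i]^+ \leq \min(m-i,\, d + [i]^-)$. Summing $[i]^+ - [i]^- \leq d$ over the first $t$ indices yields the family of cut bounds $C_t \leq td$, where $C_t$ denotes the number of edges from $\{1,\ldots,t\}$ to $\{t+1,\ldots,m\}$; combined with the trivial split $|E(D_k)| \leq \binom{t}{2} + \binom{m-t}{2} + C_t$ for every $t$, this gives the first-order bound $\min_t \bigl[\binom{t}{2} + \binom{m-t}{2} + td\bigr]$, which after substituting $k = (3m - d - 1)/2$ already matches the first two terms $\binom{k+1}{2} + \binom{k+1-n/3}{2}$ of $B(k,n)/3$ up to lower-order error. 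The refined summation $\sum_{j=2}^{s-1} j(j+1)\binom{k+1-c_j n}{2}$ must arise from combining several cut constraints simultaneously: with $c_j = \tfrac{1}{2} - \tfrac{1}{3j(j+1)}$ and $\binom{s}{2} < m/d \leq \binom{s+1}{2}$, I expect the extremal digraph to split $\{1,\ldots,m\}$ into $s$ consecutive blocks whose sizes are approximately $m/s + d(a - (s+1)/2)$ for the $a$-th block (this value emerging from a Lagrangian optimisation of the dual LP), with each internal level $j$ contributing precisely $j(j+1)\binom{k+1-c_j n}{2}$ after expanding binomials and using the explicit formula for $c_j$.

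The main obstacle is rigorously justifying this layered extremal structure and carrying out the matching calculation cleanly. I would address extremality either by LP duality, exhibiting dual variables that form a staircase height function with unit drops between successive blocks, or by induction on $s$: after using the vertex constraint at $i = 1$ to bound the contribution of the outermost block of $D_k$, the residual problem on the remaining vertices can be reduced to one of the same form with smaller parameters, and the recursion unrolls into the summation defining $B(k,n)$. The bookkeeping required to line up the anomalous leading coefficients $1, 1$ of the first two binomial terms of $B(k,n)$ against the general pattern $j(j+1)$ is delicate; these stem from recombining the boundary contribution of the bichromatic calculation of Proposition~\ref{pro:heterogeneous} with the monochromatic bound to yield the unified expression $B(k,n)$.
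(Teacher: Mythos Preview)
Your reduction to an upper bound $U$ on $|E(D_k)|$ with target $3U \le kn - B(k,n)$ is exactly the paper's first move, and the identity $3\binom{n/3}{2}+3\binom{n/3+1}{2}=n^2/3$ is precisely what makes the arithmetic close up. So the framework is correct.

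Where you diverge is in how to obtain the sharp bound on $|E(D_k)|$. The paper does \emph{not} attack this via cut inequalities or LP duality. Instead it observes that $D_k$ lies in the class $\mathcal{D}_{v,m}$ (with $v=n/3$, $m=n-2k-1$) of digraphs on $\{1,\dots,v\}$ with forward edges and $[i]^+\le m+[i]^-$, and invokes a result of Balogh--Salazar that the edge-maximal member of $\mathcal{D}_{v,m}$ is an explicit graph $D_0(v,m)$ defined recursively by $[i]^+=\min([i]^-+m,\,v-i)$ with out-neighbours consecutive. The rest of the proof is then a direct computation: an exact closed form $E(k,n)$ for $|E(D_0)|$ in terms of the structure functions $S_m(v),T_m(v),U_m(v)$, followed by the algebraic identity
\[
E(k,n)+\tfrac{1}{3}\bigl(B(k,n)-kn\bigr)=\frac{4s^2-s^4-3(2+2U_m(v)-s)^2}{24s}\le 0,
\]
valid because $s\ge 2$ when $k>n/3$. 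So the paper trades your optimisation argument for (i) a citation establishing extremality of $D_0$, and (ii) a single exact calculation.

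Your route is not wrong in spirit, but as it stands it has a genuine gap. The telescoping cut bound $C_t\le td$ uses only one half of the constraint~(\ref{eq:degrees}); the other half $[i]^+\le v-i$ is what forces the layered saturation in $D_0$ and is what makes the bound drop from $\binom{v}{2}$ down to something of order $v^{3/2}\sqrt{d}$ when $d$ is small. A single cut $C_t\le td$ plus the trivial $\binom{t}{2}+\binom{v-t}{2}$ cannot see this: for small $d$ the minimum over $t$ of $\binom{t}{2}+\binom{v-t}{2}+td$ is roughly $v^2/4$, which is far too large. Getting the correct answer requires either nesting many cuts simultaneously (your staircase dual) or the inductive peeling you sketch, and in either case you will effectively be reconstructing $D_0$ and its layer decomposition indexed by $S_m(i)$. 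That is feasible, but it is precisely the content of the Balogh--Salazar lemma plus the edge-count computation; you have not yet supplied it, and the ``I expect'' and ``must arise'' passages are where the real work lies.
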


\begin{proof}
We just need to show that $D_{k}$ has at most $\binom{n/3}{2}-\frac{1}{3}%
\left( B\left( k,n\right) -3\binom{n/3+1}{2}-(k-n/3)n\right) =\frac{1}{3}%
\left( kn-B\left( k,n\right) \right) $ edges. We start by giving two
definitions. Let $\mathcal{D}_{v,m}$ be the class of all digraphs on
$v$
vertices $1,2,\ldots ,v$ satisfying that $\left[ i\right] ^{+}\leq m+\left[ i%
\right] ^{-}$ for all $1\leq i\leq v$, and $i<j$ whenever $i\rightarrow j$.
Let $D_{0}\left( v,m\right) $ be the graph in $\mathcal{D}_{v,m}$ with
vertices $1,2,\ldots ,v$ recursively defined by

\begin{itemize}
\item $\left[ 1\right] ^{-}=0$,

\item $\left[ i\right] ^{+}=\min \left\{ \left[ i\right] ^{-}+m,v-i\right\} $
for each $i\geq 1$, and

\item for all $1\leq i<j\leq v$, $i\rightarrow j$ if and only if $i+1\leq
j\leq i+\left[ i\right] ^{+}$.
\end{itemize}

These definitions are equivalent to those in \cite{baloghsalazar} (pages 677
and 683). There, Balogh and Salazar show that the maximum of the function $%
2\sum_{i=1}^{v}\left[ i\right] ^{-}+\sum_{i=1}^{v}\min \left\{ \left[ i%
\right] ^{-}-\left[ i\right] ^{+}+m,m+1\right\} $ over all digraphs in $%
\mathcal{D}_{v,m}$ is attained by $D_{0}\left( v,m\right) $. Their original
statement imposes some dependency between $v$ and $m$, but this is only used
to bound the given function applied to $D_{0}\left( v,m\right) $. And their
proof, actually maximizes separately each of the two sums above. In other
words, they implicitly show that the maximum number of edges of a graph in $%
\mathcal{D}_{v,m}$ is attained by $D_{0}\left( v,m\right) $.

Note that $D_{k}$ is in $\mathcal{D}_{n/3,n-2k-1}$, and thus its number of
edges is bounded above by the number of edges of $D_{0}\left(
n/3,n-2k-1\right) $. Thus, it suffices to bound above the number of edges of
$D_{0}\left( n/3,n-2k-1\right)$.

\begin{lemma}
\label{cla:theclaim}$D_{0}\left( n/3,n-2k-1\right) {\ }$ has at most $\frac{1%
}{3}\left( kn-B\left( k,n\right) \right) $ edges.
\end{lemma}

The next section is devoted to the proof of this claim.
\end{proof}

\bigskip

The proof of Theorem~\ref{prop:main} follows immediately from Propositions~%
\ref{pro:heterogeneous} and \ref{pro:boundingnoedges}.

\section{Proof of Lemma~\ref{cla:theclaim}}

We prove Lemma~\ref{cla:theclaim} in two steps. We first
obtain an expression for the exact number of edges in $D_{0}(n/3,n-2k)$, and
then we show that this value is upper bounded by the expression in Lemma~\ref%
{cla:theclaim}. For brevity, in the rest of the section, we use $%
D_{0}:=D_{0}(n/3,n-2k-1)$, $v:=n/3$ and $m:=n-2k-1$.

\subsection{The exact number of edges in $D_{0}$}

For positive integers $j\leq i$ define (c.f., Definition 16 in \cite%
{baloghsalazar}) $S_{j}(i)$ as the unique nonnegative integer such
that
\begin{equation*}
\binom{S_{j}(i)}{2}<\frac{i}{j}\leq \binom{S_{j}(i)+1}{2}\text{; and}
\end{equation*}
$T_{j}(i)$ and $U_{j}(i)$ as the unique integers satisfying $0\leq
T_{j}(i)\leq j-1$, $0\leq U_{j}(i)\leq S_{j}(i)-1$, and
\begin{equation}
i=1+j\binom{S_{j}(i)}{2}+S_{j}(i)T_{j}(i)+U_{j}(i)\text{.}  \label{eq:a1}
\end{equation}

The key observation is that we know the indegree of each vertex in $D_{0}$.

\begin{proposition}[Proposition 17 in~\protect\cite{baloghsalazar}]
\label{ingrade} For each vertex $1\leq i\leq v$ of $D_{0}$,%
\begin{equation*}
\left[ i\right] ^{-}=m\left( S_{m}(i)-1\right) +T_{m}(i).
\end{equation*}
\end{proposition}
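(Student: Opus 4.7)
My plan is to prove the formula by strong induction on $i$, using a recurrence for $[i]^-$ derived from the greedy structure of $D_0$. For the base case $i = 1$, $[1]^- = 0$ by definition, and the decomposition $1 = 1 + m\cdot 0 + 1\cdot 0 + 0$ forces $S_m(1)=1$ and $T_m(1)=0$, so the formula also returns $m(1-1)+0 = 0$.

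Before the inductive step I would record two preliminary observations. First, the expression $f(j) := m(S_m(j)-1)+T_m(j)$ is non-decreasing in $j$: within a fixed level $s=S_m(j)$, advancing $j$ by $1$ lexicographically advances the pair $(T_m(j),U_m(j))$ through $(0,0),(0,1),\dots,(m-1,s-1)$, so $T_m$ is non-decreasing; at a transition to level $s+1$, $T_m$ resets but $m(S_m-1)$ jumps by $m$, yielding a net $+1$ in $f$. Second, in the non-boundary regime the inductive hypothesis gives $[j]^+ = [j]^- + m$, so $j+[j]^+$ is strictly increasing in $j$; this forces the predecessor set of every vertex $i$ to be a contiguous interval $\{i-[i]^-,\dots,i-1\}$.

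From these facts I obtain the recurrence
\[
[i+1]^- \;=\; [i]^- + 1 \;-\; \bigl|\{\,j<i : j+[j]^+ = i\,\}\bigr|,
\]
with the subtracted quantity lying in $\{0,1\}$ by strict monotonicity. A direct inspection of the defining equation $i = 1 + m\binom{S_m(i)}{2} + S_m(i)T_m(i) + U_m(i)$ also shows that $f(i+1)-f(i) = 0$ precisely when the step $i\mapsto i+1$ advances only $U_m$ (i.e., when $U_m(i) < S_m(i)-1$), and $f(i+1)-f(i)=1$ otherwise. The induction thus reduces to establishing the equivalence: a predecessor of $i$ is lost in the step $i \to i+1$ if and only if $U_m(i) < S_m(i)-1$.

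The main technical obstacle is proving this equivalence. Since $j+[j]^+$ is strictly increasing, the only possible lost predecessor is $a(i):=i-[i]^-$, so the condition reads $[a(i)]^- = [i]^- - m$. I would use the inductive hypothesis to compute $a(i) = 1 + m\binom{s-1}{2} + (s-1)T + U$ with $s:=S_m(i)$, $T:=T_m(i)$, $U:=U_m(i)$, and then read off $S_m(a(i)),T_m(a(i)),U_m(a(i))$ directly from this form. When $U \leq s-2$, the pair $(T,U)$ already sits in level $s-1$, giving $[a(i)]^- = m(s-2)+T = [i]^- - m$ as desired. When $U = s-1$, the rewrite $(s-1)T + U = (s-1)(T+1)$ pushes $a(i)$ either to the start of block $T+1$ in level $s-1$ (if $T<m-1$) or all the way to the start of level $s$ (if $T=m-1$), and in both subcases $[a(i)]^-$ differs from $[i]^- - m$ by exactly $1$, so no predecessor is lost. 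This closes the induction; the boundary regime $[i]^+ = v-i$ can be handled separately, since once it engages, the values $[j]^-$ for $j\leq i$ are already determined by the non-boundary dynamics and the formula carries over.
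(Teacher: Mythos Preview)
The paper does not supply its own proof of this proposition: it is quoted verbatim as Proposition~17 of Balogh--Salazar~\cite{baloghsalazar} and used as a black box. So there is nothing in the present paper to compare your argument against.

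Your inductive argument is essentially correct. The recurrence $[i+1]^-=[i]^-+1-|\{j<i:j+[j]^+=i\}|$ is valid for $i<v$ (since $[i]^+\ge 1$ always gives the gained predecessor $i$), the contiguity of predecessor sets follows from the strict monotonicity of $j\mapsto j+[j]^+$ exactly as you say, and your case analysis on $U_m(i)$ correctly matches the $0/1$ increment of $f$ against the $0/1$ loss term.

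The one soft spot is your treatment of the boundary regime. Your sentence ``once it engages, the values $[j]^-$ for $j\le i$ are already determined'' is true but does not address the real concern, which is whether a boundary value of $[j]^+$ for some $j<i$ could alter $[i]^-$. It cannot: if $j$ is in the boundary regime then $[j]^-+m\ge v-j$, so both $\min\{[j]^-+m,\,v-j\}$ and $[j]^-+m$ give the same out-neighbourhood of $j$ inside $\{1,\dots,v\}$, namely $\{j{+}1,\dots,v\}$. Hence $[i]^-$ is computed identically whether or not one imposes the truncation, and your non-boundary induction already covers all $i\le v$. Making this one-line observation explicit would close the argument cleanly.
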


We now find a close expression for $\sum_{i=1}^{v}\left[ i\right] ^{-}$, the
number of edges in $D_{0}$.

%\label{eq:exactedges}eqnarray

\begin{proposition}
The exact number of edges in $D_{0}$ is%

\begin{equation}
E(k,n):= 2m^{2}\binom{S_{m}(v)}{3}+\binom{m}{2}\binom{S_{m}(v)}{2}+2m\cdot
T_{m}(v)\binom{S_{m}(v)}{2}+ \label{eq:exactedges} 
\end{equation}
\begin{equation*}
\binom{T_{m}(v)}{2}S_{m}(v)+\left( U_{m}(v)+1\right) \left(
m(S_{m}(v)-1)+T_{m}(v)\right) 
\end{equation*} 

\end{proposition}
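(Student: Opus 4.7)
The plan is to compute $\sum_{i=1}^{v}[i]^{-}$ directly from the closed form given by Proposition~\ref{ingrade}, namely $[i]^{-} = m(S_m(i)-1) + T_m(i)$, and to group the indices $i \in \{1,\ldots,v\}$ according to the value of $S_m(i)$. Equation~(\ref{eq:a1}) gives a bijective parametrization of $i$ by the triple $(S_m(i), T_m(i), U_m(i))$: for a fixed value $S_m(i) = s$, the pair $(T_m(i), U_m(i))$ runs bijectively through $\{0,\ldots,m-1\} \times \{0,\ldots,s-1\}$. Consequently, the indices with $S_m(i) = s$ form a block of $sm$ consecutive integers; writing $S := S_m(v)$, $T := T_m(v)$, $U := U_m(v)$, the blocks for $s < S$ are traversed completely, while the block at $s = S$ is traversed only up to the index corresponding to $(T, U)$.

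First I would compute the contribution of the full blocks. For fixed $s < S$, since $[i]^{-}$ depends on $(S_m(i), T_m(i))$ only, each value $t \in \{0,\ldots,m-1\}$ is attained by exactly $s$ indices in the block, hence
\begin{equation*}
\sum_{\substack{1 \le i \le v \\ S_m(i) = s}} [i]^{-} \;=\; s\sum_{t=0}^{m-1}\bigl(m(s-1)+t\bigr) \;=\; 2m^{2}\binom{s}{2} + s\binom{m}{2}.
\end{equation*}
Summing over $s = 1, \ldots, S-1$ and applying the standard identities $\sum_{s=1}^{S-1}\binom{s}{2} = \binom{S}{3}$ and $\sum_{s=1}^{S-1} s = \binom{S}{2}$ yields the first two terms of $E(k,n)$.

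Finally, I would handle the partial block at $s = S$ by splitting according to the value of $T_m(i) = t$. For each $t \in \{0,\ldots,T-1\}$ the row is complete, contributing $S$ indices each of indegree $m(S-1)+t$; for $t = T$ only the $U+1$ indices with $U_m(i) \in \{0,\ldots,U\}$ appear, each of indegree $m(S-1) + T$. Adding these,
\begin{equation*}
S\sum_{t=0}^{T-1}\bigl(m(S-1)+t\bigr) + (U+1)\bigl(m(S-1)+T\bigr) \;=\; 2mT\binom{S}{2} + \binom{T}{2}S + (U+1)\bigl(m(S-1)+T\bigr),
\end{equation*}
which are precisely the last three terms of $E(k,n)$. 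Combining the full-block and partial-block contributions gives the claimed formula. The only delicate point is the careful bookkeeping in the incomplete last block, but the rigid structure enforced by~(\ref{eq:a1}) makes that bookkeeping straightforward; no extremal or optimization step is needed, because Proposition~\ref{ingrade} already supplies the exact indegrees.
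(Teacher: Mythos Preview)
Your proposal is correct and follows essentially the same approach as the paper: the paper also partitions $\{1,\ldots,v\}$ first by the value of $S_m(i)$ (its sets $P_j$) and, within each block, by the value of $T_m(i)$ (its sets $Q_{j,\ell}$), splitting the sum into a full-block part $V_1$ and a partial-block part $V_2+V_3$ that coincide term-for-term with your two computations. The only cosmetic difference is that the paper names the three index ranges via the cut points $v_1=m\binom{S_m(v)}{2}$ and $v_1+v_2=v_1+S_m(v)T_m(v)$, whereas you describe the same ranges directly through the $(s,t,u)$-parametrization from~(\ref{eq:a1}).
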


\noindent \emph{Proof.} We break $\sum_{i=1}^{v}\left[ i\right] {^{-}}$ into
three parts. Let $v_{1}:=m\tbinom{S_{m}(v)}{2}$, $v_{2}:=S_{m}(v)T_{m}(v)$,
and set%
\begin{equation*}
V_{1}=\sum_{i=1}^{v_{1}}\left[ i\right] {^{-}}\text{, }V_{2}=%
\sum_{i=v_{1}+1}^{v_{1}+v_{2}}\left[ i\right] {^{-}}\text{, and }%
V_{3}=\sum_{i=v_{1}+v_{2}+1}^{v}\left[ i\right] {^{-}}
\end{equation*}%
so that
\begin{equation}
\sum_{i=1}^{v}\left[ i\right] {^{-}}=V_{1}+V_{2}+V_{3}.  \label{eq:decomp}
\end{equation}

We calculate $V_{1}$, $V_{2}$, and $V_{3}$ separately.

If $\ell ,j$ are integers such that $1\leq j\leq S_{m}(v)-1$ and $0\leq \ell
\leq m$, we define $P_{j}:=\{i:S_{m}(i)=j\}$ and $Q_{j,\ell }:=\{i\in
P_{j}:T_{m}(i)=\ell \}.$

We first calculate $V_{1}$. Note that $P_{1},P_{2},\ldots ,P_{S_{m}(v)-1}\ $%
is a partition of $\{1,2,...,v_{1}\}$ and $Q_{j,0},Q_{j,1},\ldots ,Q_{j,m}$
is a partition of $P_{j}$, for each $1\leq j\leq S_{m}(v)-1.$ Also, $%
S_{m}(v_{1}+1)=S_{m}(v)$ and $S_{m}(i)\leq S_{m}(v)-1$ for $1\leq i\leq
v_{1} $. Thus $V_{1}$ can be rewritten as $\sum_{j=1}^{S_{m}(v)-1}\sum_{i\in
P_{j}}\left[ i\right] {^{-}}.$ By Proposition \ref{ingrade}, this equals
\begin{eqnarray*}
V_{1} &=&\sum_{j=1}^{S_{m}(v)-1}\left( m\sum_{i\in P_{j}}\left(
S_{m}(i)-1\right) +\sum_{i\in P_{j}}T_{m}(i)\right) \\
&=&\sum_{j=1}^{S_{m}(v)-1}\left( m\sum_{i\in P_{j}}\left( j-1\right)
+\sum_{\ell =0}^{m}\sum_{i\in Q_{j,\ell }}\ell \right) .
\end{eqnarray*}%
On other hand, by definition $\left\vert Q_{j,\ell }\right\vert $ $=j$ for $%
0\leq \ell \leq m-1$, which implies that $\left\vert P_{j}\right\vert =mj$.
Therefore%
\begin{align}
V_{1}\ & =\sum_{j=1}^{S_{m}(v)-1}\left( m^{2}j\left( j-1\right) +\sum_{\ell
=0}^{m}\ell \left\vert Q_{j,\ell }\right\vert \right)
=\sum_{j=1}^{S_{m}(v)-1}\left( m^{2}j\left( j-1\right) +j\sum_{\ell
=1}^{m-1}\ell \right)  \notag \\
& =\sum_{j=1}^{S_{m}(v)-1}\left( 2m^{2}\binom{j}{2}+\binom{m}{2}j\right)
=2m^{2}\binom{S_{m}(v)}{3}+\binom{m}{2}\binom{S_{m}(v)}{2}.  \label{eq:fora}
\end{align}%
$\allowbreak $

Now, we calculate $V_{2}$. Since $S_{m}(i)=S_{m}(v)$\textrm{\ }for each $%
v_{1}+1\leq i\leq v$, and $\left[ i\right] {^{-}}=$ $m\left(
S_{m}(i)-1\right) +T_{m}(i)$, then $V_{2}=\sum_{i=v_{1}+1}^{v_{1}+v_{2}}%
\left[ i\right] ^{-}=\sum_{i=v_{1}+1}^{v_{1}+v_{2}}m\left( S_{m}(v)-1\right)
+T_{m}(i).$ Therefore
\begin{equation*}
V_{2} =\sum_{i=v_{1}+1}^{v_{1}+v_{2}}m\left( S_{m}(v)-1\right)
+\sum_{i=v_{1}+1}^{v_{1}+v_{2}}T_{m}(i)=m\left( S_{m}(v)-1\right)
S_{m}(v)T_{m}(v)+\sum_{i=v_{1}+1}^{v_{1}+v_{2}}T_{m}(i)\text{.}
\end{equation*}%
Again, we have that $\left\vert Q_{S_{m}(v),\ell }\right\vert
=S_{m}(v)$ for every $0\leq \ell \leq m-1$. Because $0\leq
T_{m}(i)\leq T_{m}(v)-1$ for every $v_{1}+1\leq i\leq v_{1}+v_{2}$,
and $T_{m}(v_{1}+v_{2}+1)=T_{m}(v)$, it follows that
$Q_{S_{m}(v),0},Q_{S_{m}(v),1},\ldots ,Q_{S_{m}(v),T_{m}(v)-1}$ is a
partition of $\{v_{1}+1,v_{1}+2,...,v_{1}+v_{2}\}.$ Thus%
\begin{eqnarray*}
\sum_{i=v_{1}+1}^{v_{1}+v_{2}}T_{m}(i) =\sum_{\ell
=0}^{T_{m}(v)-1}\sum_{i\in Q_{S_{m}(v),\ell }}T_{m}(i)=\sum_{\ell
=0}^{T_{m}(v)-1}\ell \left\vert Q_{S_{m}(v),\ell }\right\vert
=\sum_{\ell =1}^{T_{m}(v)-1}\ell \cdot
S_{m}(v)=S_{m}(v)\binom{T_{m}(v)}{2}.
\end{eqnarray*}%
Then%
\begin{eqnarray}
V_{2} = 2m\binom{S_{m}(v)}{2}T_{m}(v)+S_{m}(v)\binom{T_{m}(v)}{2}.
\label{eq:forb}
\end{eqnarray}

Finally, we calculate $V_{3}$. Since $S_{m}(i)=S_{m}(v)$ and $%
T_{m}(i)=T_{m}(v)$ for every $v_{1}+v_{2}+1\leq i\leq v$ and $\left[ i\right]
{^{-}}=$ $m\left( S_{m}(i)-1\right) +T_{m}(i)$, it follows that
\begin{align}
V_{3}=\sum_{i=v_{1}+v_{2}+1}^{v}\left[ i\right] {^{-}}&
=\sum_{i=v_{1}+v_{2}+1}^{v}m\left( S_{m}(i)-1\right)
+T_{m}(i)=\sum_{i=v_{1}+v_{2}+1}^{v}m\left( S_{m}(v)-1\right) +T_{m}(v)
\notag \\
& =(v-v_{1}-v_{2})\left( m\left( S_{m}(v)-1\right) +T_{m}(v)\right)  \notag
\end{align}%
From (\ref{eq:a1}) it follows that $U_{m}(v)+1=v-v_{1}-v_{2}$, and so
\begin{equation}
V_{3}=\left( U_{m}(v)+1\right) (m\left( S_{m}(v)-1\right) +T_{m}(v)).
\label{eq:forc}
\end{equation}%
Now from (\ref{eq:fora}), (\ref{eq:forb}), and (\ref{eq:forc}), it follows
that $E(k,n)=V_{1}+V_{2}+V_{3}$, and so Proposition~\ref{pro:boundingnoedges}
follows from (\ref{eq:decomp}).

\subsection{Upper bound for number of edges in $D_{0}$}

\begin{proof}[\textbf{Proof of Lemma~\protect\ref{cla:theclaim}}]
Recall that $v:=n/3$ and $m:=n-2k-1$. If $k>n/3$, then $v\geq m$. From (\ref%
{eq:a1}), it follows that
\begin{equation*}
T_{m}(v)=\frac{v-1-m\binom{S_{m}(v)}{2}-U_{m}(v)}{S_{m}(v)}\text{.}
\end{equation*}%
Note that $s=s(k,n)$ in the definition of $B(k,n)$ is equal to $S_{m}(v)$.
We use this fact, together with the previous identity substituted in the
expression of $E(k,n)$ in (\ref{eq:exactedges}), to obtain the following
expression for $E(k,n)+(B(k,n)-kn)/3$. The next identity follows from a
long, yet elementary, simplification (which can be efficiently performed in
a CAS like Maxima, Mathematica or Maple).%
\begin{eqnarray*}
E(k,n)+\frac{1}{3}\left( B(k,n)-kn\right) &=&\frac{4s^{2}-s^{4}-3\left(
2+2U_{m}(v)-s\right) ^{2}}{24s} \\
&\leq &\frac{s^2\left( 4-s^{2}\right) }{24}\leq 0\text{.}
\end{eqnarray*}%
The last inequality follows from the fact that $s=S_{m}(v)\geq 2$ whenever $%
k>n/3$.
\end{proof}

%%%%%%%%%%%%%%%%%%%%%%%%%%%%%%%%%%%%%%%%%%%%%%%%%%%%%%%%%%%%%%%%%%%%%%%%%%%%%%%%

\section{Constructing geometric drawings from smaller ones}\label{gencon}

In this section, we describe a refinement of a method used in \cite{af1, aak, brodsky} to grow a geometric drawing $D_m$ of $K_m$ (the {\em base} drawing) into a geometric drawing of $K_n$ (the {\em augmented} drawing) for some $n > m$. The goal is to produce geometric drawings of complete graphs with as few crossings as possible. The method substitutes each point $p_i$ in the underlying point set of $D_m$ by a {\em cluster} of points $C_i$. The cluster $C_i$ is an affine copy of a preset {\em cluster model} $S_i$ (so that the order types of $C_i$ and $S_i$ are the same) carefully placed near $p_i$ and almost aligned along a line $\ell_i$ through $p_i$. More precisely, if $C=\bigcup_{j=1}^{m}{C_j}$, then $\ell_i$ divides the set $C \setminus C_i$ into two sets of sizes as equal as possible, and any line spanned by two points in $C_i$ has the same  ``halving'' property as $\ell_i$ on $C \setminus C_i$. Such a placement helps to minimize the number of convex quadrilaterals that involve two points in $C_i$ and, as a consequence, the total number of crossings in the augmented drawing.

In a nutshell, the difference between our approach and that in~\cite{aak} is that, for each $i$, we allow one cluster $C_{\sigma(i)}$ with $\sigma(i) \neq i$ to be splitted by $\ell_i$, and ask that no two clusters split each other. Whereas in~\cite{aak}, each cluster $C_j$ other than $C_i$ is completely contained in a semiplane of $\ell_i$. While this step further is more general and powerful, it brings new technical complications that are analyzed and sorted out throughout this section.

\subsection{Input and  output}

The primary ingredients of our construction are a base point-set $P$, sets $S_i$ that serve as models for our clusters, and what we call a {\em pre-halving} set of lines (Condition 3 below), which is a generalization of the corresponding ``halving properties'' required in~\cite{af1, aak}.

\vglue 0.4 cm

\noindent{\em The input }

\vglue 0.4 cm

\begin{enumerate}
\item The {\em base set}: a point set $P=\{p_1,p_2,\ldots,p_m\}$ in general position. This is the underlying set of the {\em base} geometric drawing of $K_m$.
\item The {\em cluster models}: for each $i=1,2,\ldots,m$, a nonempty point set $S_i$ in general position. We ask that no two points in a cluster $S_i$ have the same $x$-coordinate. Let $s_i=|S_i|$ and $I=\{i:s_i>1\}$.

\begin{figure}[htbp]
\begin{center}
\includegraphics[width=6.5in]{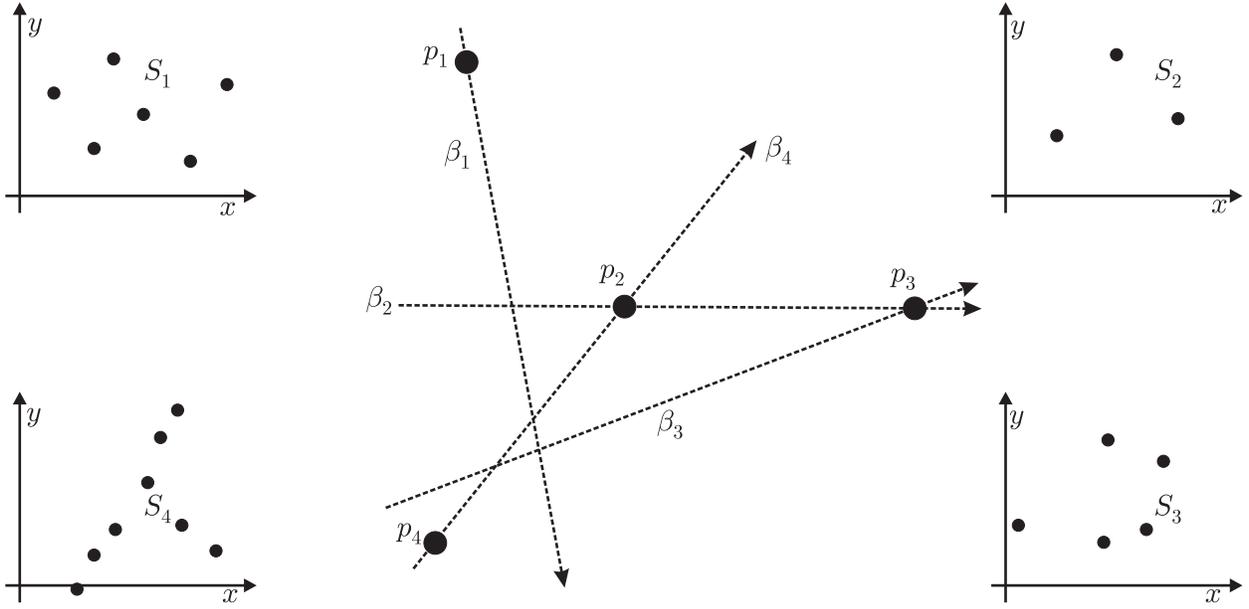}
\caption{The sets $S_1, S_2, S_3$, and $S_4$ are cluster models. We show a pre-halving set of lines $\{\beta_1,\beta_2,\beta_3,\beta_4\}$ for the base point-set $P=\{p_1,p_2,p_3,p_4\}$ and the integers $s_1=|S_1|=6, s_2=|S_2|=3, s_3=|S_3|=5$, and $s_4=|S_4|=8$.}
\label{fig:f01}
\end{center}
\end{figure}

\item The pre-halving set of lines: for each $i \in I$, a directed line $\beta_i$ containing $p_i$. For each $\beta_i$, we let $\ll(i)$ (respectively, $\rr(i)$) denote the set of those $k$ such that $p_k$ is on the left (respectively, right) semiplane of $\beta_i$. If $\beta_i$ goes through a $p_j$ other than $p_i$, we say that $p_i$ and $\beta_i$ are {\em splitting}. In this case, we say that $\beta_i$ {\em splits} $p_j$, and write $j = \sigma(i)$. Otherwise, $p_i$ and $\beta_i$ are called {\em simple}. (Note that $\sigma(i)$ is defined if and only if $p_i$ and $\beta_i$ are splitting.) The collection of these lines must satisfy the following properties.
\begin{description}
\item{(a)} If $i\neq j$, then $\beta_i \neq \beta_j$ and $\beta_i \neq -\beta_j$, the reverse line of $\beta_j$.
\item{(b)} If $\beta_i$ is simple, then $0 \leq \sum_{k \in \ll(i)} s_k - \sum_{k \in \rr(i)} s_k \leq 1$.
\item{(c)} If $\beta_i$ is splitting, then $\beta_i$ is directed from $p_i$ to $p_{\sigma(i)}$ and $|\sum_{k \in \ll(i)} s_k - \sum_{k \in \rr(i)} s_k| \le s_{\sigma (i)} - 1$.
\end{description}
\end{enumerate}

Note that properties (a) to (c) relate only to the point set $P$ and to the integers $s_i$, and are independent of the order types of the sets $S_i$.

\vglue 0.4 cm

\noindent{\em The output }

\vglue 0.4 cm

The construction consists of substituting each $p_i$, with $i \in I$, by a {\em cluster}\ $C_i$. $C_i$ is a suitable affine copy of $S_i$ whose points are aligned along a line $\ell_i$. If $s_i=1$, then $C_i=\{p_i\}$. The result is a set $C:=\bigcup_{i=1}^m C_i$ of $n:=|C|$ points in general position, the {\em augmented} point set. To describe in detail the properties of $C_i$ and $\ell_i$, we need a couple of definitions.

A directed line $\ell$ {\em halves} a set of points $T$ if the left semiplane of $\ell$ contains $\tcl{|T|/2}$ points of $T$, and the right semiplane contains the remaining $\tfl{|T|/2}$ points. It follows from the definition that $\ell$ and $T$ are disjoint.
If $\ell$ is a line that halves a set $T$, and $S$ is a set of points disjoint from $T$, then $S$ {\em halves $T$ as $\ell$}, if every line $\ell'$ spanned by two points in $S$ can be directed so that it halves $T$ in exactly the same way as $\ell$. That is, the left (respectively, right) semiplane of $\ell'$ contains the same subset of $T$ as the left (respectively, right) semiplane of $\ell$.

With this terminology, the key properties of the sets $C_i$ and of the lines $\ell_i$ are the following.

\begin{description}
\item{(1)} {\bf Inherited order type property.} For any three pairwise distinct $i,j,k$, and $q_i \in C_i, q_j\in C_j, q_k\in C_k$, the order type of the triple $q_i q_j q_k$ is the same as the order type of $p_i p_j p_k$.
\item{(2)} {\bf Halving property.} For each $i \in I$, $\ell_i$ halves $C \setminus C_i$ and $C_i$ halves $C \setminus C_i$ as $\ell_i$.
\end{description}

\subsection{The construction}

\bigskip
\noindent{\sc Step 1 } {\sl Enlarging each point $p_i$ to a very small disc $D^i$ that will contain the cluster $C_i$.}
\bigskip

For each $i=1,\ldots,m$, let $D^i$ be a disc of radius $r_i$ centered at $p_i$, such that the collection ${D^i}$ satisfies the following. If $q_i\in D^i, q_j\in D^j, q_k\in D^k$ (with $i,j,k$ pairwise distinct), then the order type of the triple $q_i q_j q_k$ is the same as the order type of $p_i p_j p_k$. It is clear that this can be achieved by making the radius of each $D^i$ sufficiently small.

\bigskip
\noindent{\sc Step 2 } {\sl Replacing each $p_i$ with a set $U_i$ contained on $D^i \cap \beta_i$.}
\bigskip

We now construct a first approximation $U_i$ to each cluster $C_i$. The first simplification is that the each set $U_i$ is collinear, as opposed to $C_i$, which is in general position. Although, we might certainly describe the construction without using intermediate collinear sets, it is a convenient device that greatly simplifies our work.

For each $i \in I$, consider a similarity transformation that takes the origin to $p_i$ and the $x$-axis to $\beta_i$, such that the image $C_i$ of $S_i$ is contained in the interior of the disc centered at the origin with radius $r_i/2$. Let $U_i$ be the projection of $C_i$ onto $\beta_i$, thus $U_i$ lies on $\beta_i$. If $s_i=1$, we make $U_i=C_i=\{p_i\}$. Then $U_i$ is completely contained in $D^i$ for every $i$. Let $U=\bigcup_{j=1}^m {U_j}$. See Figure~\ref{fig:f03}.

\begin{figure}[htbp]
\begin{center}
\includegraphics[width=6.5in]{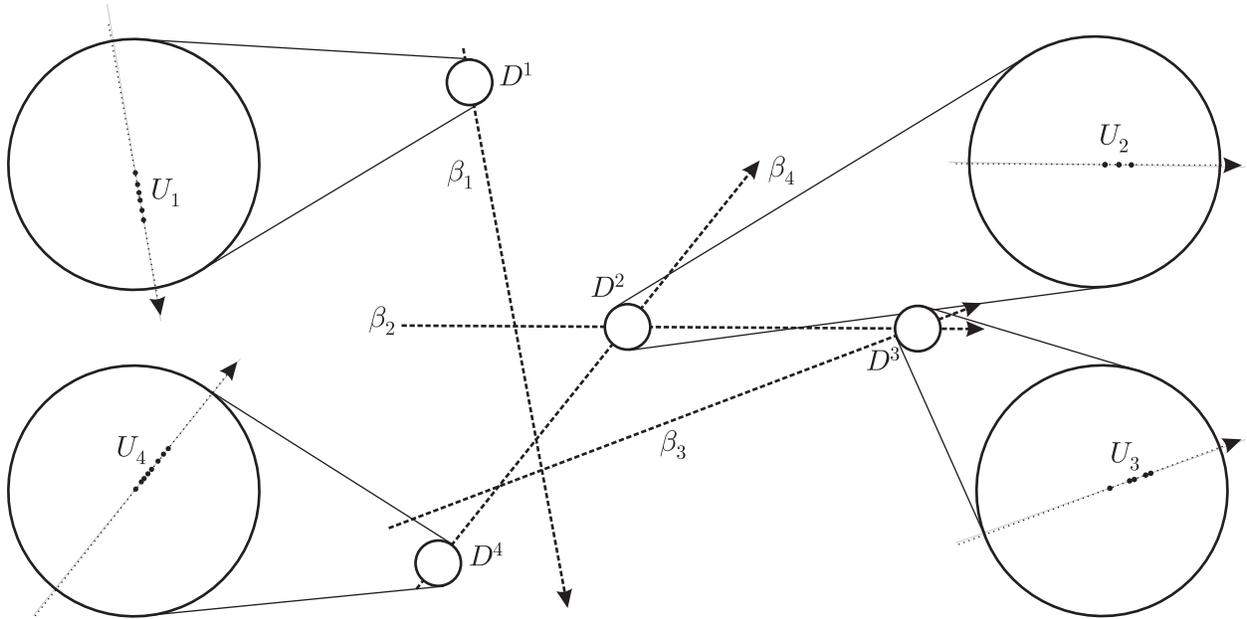}
\caption{Enlarging each point $p_i$ to a small disc $D^i$ of radius $r_i$ (example from Figure~\ref{fig:f01}) and the sets $U_1, U_2, U_3$, and $U_4$ from Step 2. Eeach set $U_i$ lies on $\beta_i$ and is contained in the disc $D^i$.}
\label{fig:f03}
\end{center}
\end{figure}

Before moving on to the next step, we observe that each set $\beta_i$ has a good halving potential. In fact, if $\beta_i$ is simple, it already halves $U \setminus U_i$. And if $\beta_i$ is splitting, then the difference between the number of points in $U \setminus U_i$ on each side of $\beta_i$ is at most $s_{\sigma(i)} - 1$ . In this case, $\beta_i$ does not necessarily halve $U \setminus U_i$, but it intersects $D^{\sigma(i)}$, which contains exactly $s_{\sigma(i)}$ points of $U \setminus U_i$. Thus, a very small rotation of $U_i$ (and $\beta_i$) may balance this difference. A preview of Figure~\ref{fig:f08} may be of help here.
Unfortunately, there is a significant gap to be filled: we may certainly perform this rotation to adjust any particular $\beta_i$, but whenever the turn comes for $\beta_{\sigma(i)}$ to be adjusted, if we rotate this line we may break the halving property previously achieved by $\beta_i$. Taking care of this possible scenario transforms an otherwise intuitive, straightforward procedure into a somewhat technical one. This is the task for the next step.

\bigskip
\noindent{\sc Step 3 } {\sl Moving the sets $U_i$, so that each $U_i$ lies on a line $\ell_i$ that halves $U \setminus U_i$.}
\bigskip

Our goal in this step is to slightly move (rotate or translate) each set $U_i$ with $i \in I$, so that the line containing $U_i$ passes through $p_i$ and halves $U \setminus U_i$. In what follows, $\ell_i$ denotes the line containing $U_i$. We describe a dynamic process that moves $U_i$, and accordingly $\ell_i$ and $C_i$. Even when we are actually transforming the $U_i$, $\ell_i$, and $C_i$, we keep their names all the way through. If $s_i=1$, $U_i=C_i=\{p_i\}$ remains unchanged throughout this process. The central feature of the whole process is the following

\bigskip
\noindent{\bf Key property }
{\sl The set $U_i$ is contained in the interior of $D^i$ and lies on $\ell_i$ (whenever $s_i>1$) during the entire process. In their final position, $\ell_i$ goes through $p_i$ and halves $U \setminus U_i$.}

\begin{figure}[htbp]
\begin{center}
\includegraphics[width=6.5in]{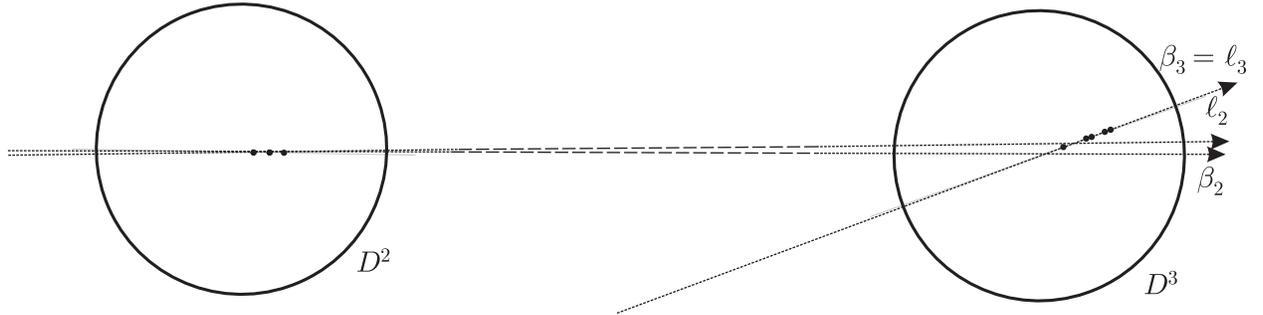}
\caption{We consider $D^2$ and $D^3$ from Figure~\ref{fig:f03}. The  $\ell_2$ halves $U \setminus U_2$ and $\ell_3$ halves $U \setminus U_3$. Since $p_3$ is simple, $U_3$ remains unchanged and $\ell_3=\beta_3$. $p_2$ is splitting, with $\beta_2$ through $p_3$. There are $19$ points in $U \setminus U_2$, $10$ of which must be on the left of $\ell_2$. $U_1$ ($6$ points) is on the left of $\beta_2$ and $U_4$ ($8$ points) is on its right. We use $U_3$ to balance: rotate $U_2$ around $p_2$, so that $\ell_2$ leaves $4$ points of $U_3$ on its left. }
\label{fig:f08}
\end{center}
\end{figure}

To describe the process, we consider the digraph $G$ with vertex set $P^{\prime}=\{p_i\in P:i\in I\}$, induced by the set of splitting pre-halving lines, that is, there is an arc from $p_i$ to $p_j$ if and only if $\sigma(i) =j$, see Figure~\ref{fig:graphs}. Thus, if $p_i$ is simple, then its outdegree is zero, and if it is splitting, then its outdegree is one. These properties guarantee that each strong component of $G$ is either acyclic, or contains at most one directed cycle. In any case, each strong component must have a vertex, called {\em root}, that can be {\em reached} from all other vertices in the component. (That is, for each vertex $p$ in the component, there is a directed path from $p$ to the root.)

\begin{figure}[h]
\begin{center}
\includegraphics[width=6.5in]{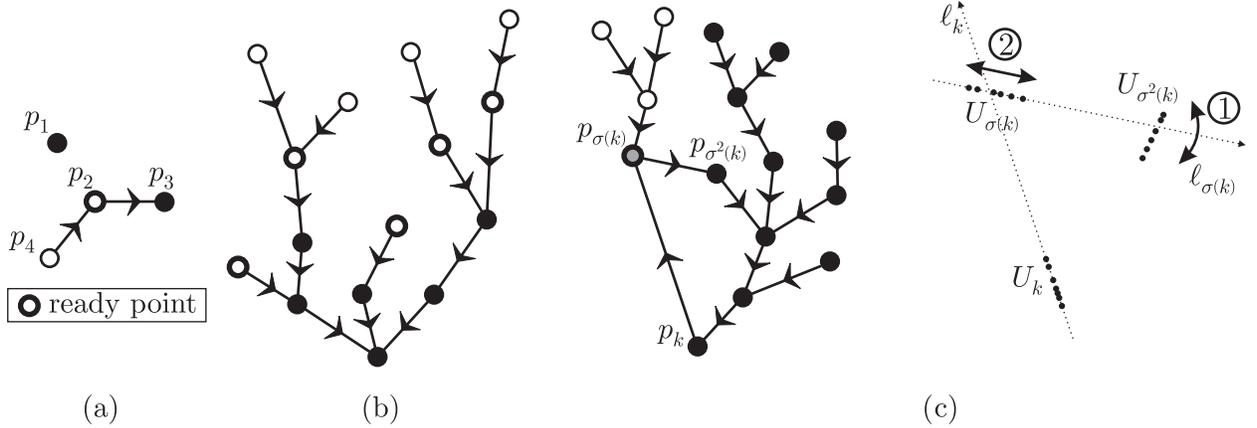}
\caption{(a) The graph $G$ corresponding to the example in Figure~\ref{fig:f01}. (b) An acyclic component. (c) A component with a cycle at the time (2) is applied in Step 3.}
\label{fig:graphs}
\end{center}
\end{figure}

We work on one component at a time. Let $P_c\subseteq P^{\prime}$ be a strong component of $G$ and $p_k$ its root. Start by coloring all vertices of $G$ white. Coloring a point $p_i$ black means that $\ell_i$ and $U_i$ have reached their final position. Color $p_k$ black, and if $p_k$ is splitting, then color $p_{\sigma (k)}$ grey. A white or a grey point is said to be {\em ready} if $p_{\sigma (k)}$ is black. As long as there are ready points, we apply (1) or (2) below.

\begin{description}
\item{(1)} If possible, arbitrarily choose a white ready point $p_i$. Slightly rotate $U_i$ around $p_i$ until $\ell_i$ halves $U \setminus U_i$. This is always possible asking that $\ell_i$ intersects $D^{\sigma (i)}$ at all times, because $\beta_i \neq \pm \beta_j$, $\ell_i$ intersects $D^{\sigma (i)}$, $D^{\sigma (i)}$ has $s_{\sigma (i)}$ points, and before rotating $U_i$, we have an unbalance of at most $s_{\sigma (i)}-1$. Color $p_i$ black.

\item{(2)} If (1) cannot be applied, then work with the grey point $p_{\sigma (k)}$. First, proceed as in (1), that is, rotate $\ell_{\sigma (k)}$ until it halves $U \setminus U_{\sigma (k)}$. Then translate $U_{\sigma (k)}$ along $\ell_{\sigma (k)}$ until $\ell_k$ (which stays still) halves $U \setminus U_{\sigma (k)}$. Since $U_{\sigma (k)}$ was originally contained on a disc of radius $r_{\sigma (k)}/2$ centered at $p_{\sigma (k)}$, then $U_{\sigma (k)}$ is still contained in $D^{\sigma (k)}$ during the translation. Color $p_{\sigma (k)}$ black. See Figure~\ref{fig:graphs}(c).
\end{description}

Note that (2) is applied at most once, and if we cannot apply (1) or (2), then all points are already black. Since the key property is maintained at all times during the process, then at the end we have achieved our goal: Each $U_i$ lies on $\ell_i$ and is contained in the interior of $D^i$. Also, $\ell_i$ goes through $p_i$ and halves $U \setminus U_i$.

\bigskip
\noindent{\sc Step 4 } {\sl Flattening $C_i$ towards $U_i$.}
\bigskip

Finally, for $i \in I$, we affinely flatten each $C_i$ towards $U_i$ to obtain its final position. Again, if $s_i=1$, then $C_i=\{p_i\}$. For each $0 \leq \epsilon \leq 1$ and each $i\in I$, let $C_i(\epsilon)$ be the set obtained from $C_i$ by orthogonally moving its points towards $U_i$ reducing their distance to $\ell_i$ by a factor of $\epsilon$. (If $s_i=1$, then $C_i(\epsilon)=\{p_i\}$. For each $i$, measure the distances from all points in $\bigcup_{j\neq i}{S_j(\epsilon)}$ to $\ell_i$, making it negative if the point and its corresponding point in $U$ are on different sides of $\ell_i$. Let $f(\epsilon)$ be the minimum of these distances for fixed $\epsilon$ and over all $i\in I$. Note that the function $f$ is continuous and $f(0)>0$ as $\bigcup_{i=1}^m{C_i(0)}=U$. Then there must be an $\epsilon^\prime > 0$ such that $f(\epsilon^\prime)>0$. The final position of $C_i$ is $C_i(\epsilon^\prime)$. Let $C:=\bigcup_{i=1}^m{C_i}$. Since each $C_i$ is contained in $D^i$, then $C$ satisfies the inherited order type property and the halving property. And because $U$ satisfies the halving property then $C$ also satisfies it. The fact that each $C_i$ is an affine copy of $S_i$, preserving this ways order types,  will allow us to count the number of crossings in $C$.

\subsection{Keeping $3$-symmetry and $3$-decomposability}

Let $\theta$ be the counterclockwise rotation of $2 \pi /3$ around the origin. We say that the input set $(P, \{ \beta_i \}_{i\in I}, \{S_i \}_{i=1}^m)$ is \emph{$3$-symmetric} if: the base point-set $P$ is $3$-symmetric, say via the function $\theta$, the pre-halving set of lines $\{ \beta_i \}_{i\in I}$ is $3$-symmetric under the same function $\theta$, and the collection of cluster models $\{ S_i \}_{i=1}^m$ is partitioned into orbits of equal clusters according to the function $\theta$. That is, if $p_i=\theta (p_j)=\theta^2 (p_k)$, then $\beta_i=\theta (\beta_j)=\theta^2 (\beta_k)$ and $S_i=S_j=S_k$.

Similarly, we say that the input set is \emph{$3$-decomposable}, if the base point-set $P$ is $3$-decomposable, with partition $A$, $B$, and $C$, and if the collection of cluster models satisfies that
\[
\sum_{i:p_i \in A}{s_i}=\sum_{j:p_j \in B}{s_j}=\sum_{k:p_k \in C}{s_k}.
\]
Note that no assumption is made on the pre-halving set of lines.

The following observations are worth highlighting.

\begin{remark}\label{keep3s}
If the input set is $3$-symmetric, then the construction can be
performed so that the resulting augmented point set $C$ is
$3$-symmetric. Similarly, if the input set is $3$-decomposable, then
the construction can be performed so that the resulting augmented
point set $C$ is $3$-decomposable.
\end{remark}

\subsection{Counting the crossings in the augmented drawing}

Now we count the number of crossings in the resulting point set $C=\bigcup_{i=1}^m C_i$, equivalently, the number of convex quadrilaterals $\square(C)$. The most important aspect of the calculation is that it only depends on the input set, that is, on the base point set $P$, the cluster models $S_i$, and the collection of pre-halving lines. Thus the number of crossings in the augmented drawing can be calculated (perhaps using a computer) without explicitly doing the construction. This is particularly useful in Section \ref{doubling}, where we iterate this construction and, as a consequence, we obtain
the currently best general drawings of $K_n$.

\subsubsection{A closer look into how clusters get splitted}

Before going into the calculation, we introduce some terminology. If
$p_i$ is simple (respectively, splitting), then we say that $C_i$
itself is {\em simple} (respectively, {\em splitting}). If $C_i$ is
simple, then each $C_j$ with $ i\neq j$ is completely contained in a
semiplane of $\ell_i$. If $C_i$ is splitting, then the same holds
except for the cluster $C_{\sigma(i)}$: a nonempty subset $L_i$ of
$C_{\sigma(i)}$ is on the left semiplane of $\ell_i$, while the also
nonempty subset $R_i = C_{\sigma(i)} \setminus L_i$ is on  the right
semiplane.  We remark that $L_i$ and $R_i$ are {\em not} subsets of
$C_i$, but of $C_{\sigma(i)}$. By convention, if $C_i$ is simple, so
that $\sigma(i)$ is not defined, then we let $L_i = R_i =
\emptyset$.

Note that the previously defined set $\ll(i)$ (respectively, $\rr(i)$) coincides with the set of those $j$ such that $C_j$ is completely contained in the left (respectively, right) semiplane of $\ell_i$. Thus, if $C_i$ is simple, then $\ll(i) \cup \rr(i) = \{1,2,\ldots,m\}$, and if $C_i$ is splitting, then $\ll(i) \cup \rr(i) = \{1,2,\ldots,m\} \setminus\{\sigma (i)\}$. We also remark that the sizes of $L_i$ and $R_i$ are fully determined by $\sum_{j\in\ll(i)} s_j$ and $s_i$. Indeed, the left semiplane of $\ell_i$ contains $\tcl{(n-s_i)/2}$ points of $C\setminus C_i$, $\sum_{j\in\ll(i)} s_j$ of which belong to a $C_j$ other than $C_{\sigma(i)}$.  Therefore, $|L_i|= \tcl{(n-s_i)/2} - \sum_{j\in\ll(i)} s_j$. The size of $R_i$ is analogously calculated.

\subsubsection{The calculation of crossings}

We now count the number of crossings in $D_n$, that is, the number $\square(C)$ of convex quadrilaterals defined by points in $C$. We count separately five different types of convex quadrilaterals contributing to $\square(C)$. Adding the five contributions gives the exact value of $\square(C)$.

\vglue 0.3 cm
\noindent{\bf Type I } {\sl Convex quadrilaterals whose points all
  belong to different clusters.}
\vglue 0.3 cm

It follows from the inherited order type property that the number of quadrilaterals of Type I is:
\begin{equation*}
\sum_{\substack{ i<j<k<\ell \\ p_{i},p_{j},p_{k,}p_{\ell}\text{ is a
convex quadrilateral} }}s_{i}s_{j}s_{k}s_{\ell}.
\end{equation*}

\vglue 0.3 cm
\noindent{\bf Type II } {\sl Convex quadrilaterals whose points belong to three distinct clusters.}
\vglue 0.3 cm

Every convex quadrilateral of Type II has two points in a cluster $C_i$ and the other two points in clusters $C_j, C_k$, with $i,j,k$ pairwise distinct.  Now any four such points define a convex quadrilateral if and only if the points in $C_j$ and $C_k$ are on the same semiplane determined by $\ell_i$.  Recalling that the set of points in $C\backslash C_i$ on the left (respectively, right)  halfplane of $\ell_i$ is $(\bigcup_{j\in\ll(i)}
  C_j) \cup L_i$ (respectively, $(\bigcup_{j\in\rr(i)} C_j) \cup R_i$), it follows that the total number of convex quadrilaterals of Type II equals:
\begin{equation*}
\sum_{i=1}^m \  \binom{s_i}{2} \biggl( \ \sum_{\substack{
j,k\in\ll(i) \\ j < k}} s_j s_k + \sum_{j\in\ll(i)} s_j |L_i| +
\sum_{\substack{j,k\in\rr(i) \\ j < k}} s_j s_k+ \sum_{j\in\rr(i)}
s_j |R_i| \ \biggr).
\end{equation*}

\vglue 0.3 cm
\noindent{\bf Type III } {\sl Convex quadrilaterals whose points belong to two distinct clusters, with two points in each cluster.}
\vglue 0.3 cm

For each fixed $C_i$, and points $p,q$ in $C_i$, $p$ and $q$ define a convex quadrilateral of Type III with those pairs of points that are on the same $C_j$ and on the same halfspace of $\ell_i$, except when $i = \sigma(j)$ and one of $p$ and $q$ belongs to $L_j$ and the other to $R_j$. Thus the number of convex quadrilaterals of Type III that involve two points in $C_i$ is $\binom{s_i}{2} \left(\sum_{j\notin\{i,\sigma(i)\}} \binom{s_j}{2} + \binom{|L_i|}{2} + {|R_i| \choose 2}\right) - \sum_{ j:i=\sigma(j)} {s_j \choose 2} |L_j||R_j| $.  When summing over all $i$, each convex quadrilateral of Type III gets counted exactly twice. Thus the total number of convex quadrilaterals of Type III is:
\begin{equation*}
\frac{1}{2} \sum_{i=1}^m \left( \binom{s_i}{2} \left(
\sum_{j\notin\{i,\sigma(i)\}} \binom{s_j}{2} + \binom{|L_i|}{2} +
{|R_i|
  \choose 2}\right) -
\sum_{\substack{j: \\ i=\sigma(j) }} {s_j \choose 2} |L_j||R_j|
\right).
\end{equation*}

\vglue 0.3 cm
\noindent{\bf Type IV } {\sl Convex quadrilaterals with three points in the same cluster and the other point in a distinct cluster.}
\vglue 0.3 cm

To count these crossings we need to introduce a bit of terminology. If $S$ is a point set in general position in the plane, and $p=(p_x,p_y),q=(q_x,q_y),r=(r_x,r_y)\in S$, with $p_x < q_x < r_x$, then the concatenation of the segments $\overline{pq}$ and $\overline{qr}$ is either concave up or concave down.  In the former case, we say that $\{p,q,r\}$ is itself {\em concave up}, and in the latter case, we say it is {\em concave down}. We let $\sqcup(S)$ (respectively, $\sqcap(S)$) denote the number of $3$-subsets of $S$ that are concave up (respectively, concave down).  If no two points in $S$ have the same $x$-coordinate, then each $3$-subset of $S$ is
either concave up or concave down, and so in this case $\sqcup(S) + \sqcap(S) = \binom{|S|}{3}$.

Now it follows from the construction of the clusters $C_i$, that given any $3$ points $p,q,r \in C_i$, then a fourth point $s$ in
another cluster forms a convex quadrilateral with $p,q$, and $r$ if and only if either (i) $s$ is in the left semiplane of $\ell_i$ and
$\{p,q,r\}$ is concave up in $S_i$; or (ii) $s$ is in the right semiplane of $\ell_i$ and $\{p,q,r\}$ is concave down in $S_i$.

Since there are $\tcl{(n-s_i)/2}$ points $s$ in $C\setminus C_i$ in the left halfspace of $\ell_i$, and $\tfl{(n-s_i)/2}$ points $s$ of
$C\setminus C_i$ in the right halfspace of $\ell_i$, it follows that the total number of quadrilaterals of Type IV equals:

\begin{equation*}
\sum_{i=1}^m  \left( \ \sqcup(S_i) \cdot \left\lceil
\frac{n-s_{i}}{2}\right\rceil
 + \sqcap(S_i) \cdot
\left\lfloor \frac{n-s_{i}}{2}\right\rfloor \right).
\end{equation*}

\vglue 0.3 cm
\noindent{\bf Type V } {\sl Convex quadrilaterals with all four points in the same cluster.}
\vglue 0.3 cm

This is simply the sum of the number of convex quadrilaterals in each $C_i$, or equivalently, in each $S_i$:
\begin{equation*}
\sum_{i=1}^m \square(C_i)  = \sum_{i=1}^m \square(S_i).
\end{equation*}

\section{Doubling all points of a set with an odd number of points}\label{doubling}

There is a case in which the construction from Section~\ref{gencon} is particularly useful: when the cluster models are all equal to each other. This is the approach followed by Aichholzer et al.~\cite{aak} and by \'Abrego and Fern\'andez-Merchant~\cite{af1}.

In ~\cite{aak}, the equivalent of our $\ell_i$s do not split any cluster, and the cluster models are sets in convex position called {\em lens arrangements}. This is the best possible choice (under the   no-splitting assumption) to minimize the number of crossings of the augmented point set.

In~\cite{af1}, clusters of size $2$ are used in an iterative process, starting from a base point set with $m$ points, and producing augmented point sets with $2^k m$ points for $k = 0, 1, \ldots$.  This has been used to obtain the best upper bounds known for the rectilinear crossing number prior to the present work. The only limitations of the process in~\cite{af1} are that (i) the base configuration $P$ is assumed to have an even number of points; and (ii) the base configuration $P$ is assumed to have a {\em halving matching}, that is, an injection from $P$ to the set of halving lines of $P$, such that each $p\in P$ gets mapped to a line incident with $p$. The base for this iterative process is the following result.

\bigskip
\noindent{\bf Lemma 3 in~\cite{af1}} If $P$ is an $m$-element set, $m$ even, and $P$ has a halving-line matching, then there is a point set $Q=Q(P)$ in general position, $|Q|=2m$, $Q$ also has a halving-line matching, and $\square(Q) = 16\square(P) + (m/2)(2m^2 - 7m + 5)$.
\bigskip

As in~\cite{af1}, we now use clusters of size $2$, but within the more general framework described in the previous section, we can use a base
configuration with an {\em odd} number of points. This also has the advantage that the existence of a pre-halving set of lines is trivially satisfied. Moreover, after one iteration, we get a set with an even number of points and a halving matching, allowing us to use the iterative construction in~\cite{af1}.

 \begin{proposition}\label{baseodd}
Starting from any point set $P$ with $m:=|P|$ odd, and duplicating each point (that is, substituting each point by a $2$-point cluster), our construction yields a $2m$-point set $C$ in general position with $\square(C) = 16\square(P) + (m/2)(2m^2-7m+5)$.  Moreover, $C$ has a halving matching.
\end{proposition}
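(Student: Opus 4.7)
The plan is to apply the construction of Section~\ref{gencon} with $S_i$ a $2$-point set for every $i$, so that $s_i=2$ and $I=\{1,\dots,m\}$. Since $m$ is odd, $|P\setminus\{p_i\}|=m-1$ is even, so through every $p_i$ there is a line halving $P\setminus\{p_i\}$ into two sets of $(m-1)/2$; choose $\beta_i$ to be such a line, slightly perturbed so it misses every $p_j\ne p_i$. Each $\beta_i$ is then simple, and input conditions (a), (b), (c) reduce to $\sum_{k\in\ll(i)}s_k=\sum_{k\in\rr(i)}s_k=m-1$, which holds. The construction then returns $C$ with $|C|=2m$ in general position and $\ell_i=\beta_i$, and by the halving property the line $\ell_i'$ through the two points of $C_i$ halves $C\setminus C_i$, so $\ell_i'$ is a halving line of $C$.

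Next I would compute $\square(C)$ using the Type~I--V decomposition of Section~\ref{gencon}. Types IV and V vanish because $s_i=2<3$. The inherited order-type property gives Type~I equal to $2^4\square(P)=16\square(P)$. For Type~II, no splitting yields $L_i=R_i=\emptyset$ and $|\ll(i)|=|\rr(i)|=(m-1)/2$, so the contribution is $m\cdot 8\binom{(m-1)/2}{2}=m(m-1)(m-3)$. Type~III similarly simplifies to $\frac{1}{2}\sum_{i=1}^m\binom{s_i}{2}\sum_{j\ne i}\binom{s_j}{2}=\frac{m(m-1)}{2}$. Summing,
\[
\square(C)=16\square(P)+m(m-1)(m-3)+\frac{m(m-1)}{2}=16\square(P)+\frac{m(m-1)(2m-5)}{2}=16\square(P)+\frac{m}{2}(2m^2-7m+5),
\]
as claimed.

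The main obstacle is exhibiting a halving matching of $C$. The $m$ lines $\{\ell_i'\}_{i=1}^m$ are $m$ distinct halving lines of $C$, each through the two points of one cluster; assign $\ell_i'$ to one of the two points of $C_i$. It remains to find, for each of the other $m$ points (one per cluster), a further distinct halving line of $C$ through it. The plan is to exploit the freedom in choosing the affine copies (which of the two points of $S_i$ is placed on which side of $\beta_i$, and the small perpendicular shift of Step~4) so that, for every edge $\overline{p_ip_j}$ of $P$ whose line splits $P\setminus\{p_i,p_j\}$ into groups of sizes $(m-1)/2$ and $(m-3)/2$, some line through one point of $C_i$ and one point of $C_j$ becomes a halving line of $C$: for such pairs the side-count condition $2a+\gamma+\delta=m-1$ with $\gamma,\delta\in\{0,1\}$ is satisfiable because $m-1$ is even and $a\in\{(m-1)/2,(m-3)/2\}$. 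The technical heart of the argument, which I expect to be the hardest step, is to verify a Hall-type condition guaranteeing an injective assignment of these ``Type~B'' halving lines to the $m$ unmatched points; this I would handle componentwise on the digraph of Section~\ref{gencon}, Step~3, using that for $m$ odd every vertex of $P$ participates in a balanced edge of the required type.
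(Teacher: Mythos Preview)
Your computation of $\square(C)$ is correct and essentially identical to the paper's: the same choice of simple pre-halving lines $\beta_i$, the same Type~I--V breakdown, and the same arithmetic.

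The gap is in the halving-matching part. You correctly isolate this as the main obstacle, but your proposed route---adjusting the affine copies and then verifying a Hall-type condition for an injective assignment of ``Type~B'' halving lines---is left unexecuted, and it is not clear it goes through without substantial further work. (Your mention of the digraph from Step~3 of Section~\ref{gencon} is also off the mark here: with all $\beta_i$ simple, that digraph has no arcs, so it carries no useful structure for a componentwise argument.)

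The paper avoids any global matching argument entirely. It works directly with the constructed $C$ (no further adjustment of the affine copies is needed), and for each cluster $C_i=\{p_i',p_i''\}$ it produces a \emph{second} halving line by a purely local rotation: rotate $\ell_i$ counterclockwise around $p_i'$ until the first point $q\in C$ is hit, obtaining a line $\overline{\ell_i'}$ through $p_i'$ and $q$. If $q$ lies before $p_i'$ on this line, then $\overline{\ell_i'}$ itself is a halving line; otherwise the line through $p_i''$ and $q$ is. In either case one gets a halving line through exactly one of $p_i',p_i''$, and $\ell_i$ is assigned to the other. Distinctness of these $2m$ lines is then immediate from the construction (different $i$ give different first-hit points $q$, and $\ell_i\neq\ell_j$ trivially). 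This local rotation trick is the idea you are missing; it replaces your proposed Hall-condition verification with a two-line case analysis.
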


\noindent{\em Proof.}  To apply our construction, we first need to check the existence of a pre-halving set of lines. This is trivial because $s_i = 2$ for every $i=1,\ldots,m$. That is, it suffices to choose, for each $p_i$, a line $\beta_i$ through $p_i$ that leaves $(m-1)/2$ points of $P$ on
each side. Moreover, such a line is simple, and thus $L_i=R_i=\emptyset$. Knowing the existence of a pre-halving set of lines, we may proceed to calculate the number of convex quadrilaterals in the augmented $2m$-set $C$.

\begin{itemize}
\item {\bf Type I.} Since $s_i=2$ for each $i$, then $C$ has $16\square(P)$ convex quadrilaterals of Type I.
\item {\bf Type II.} For each $i$, the line $\ell_i$ has exactly $(m-1)/2$ clusters $C_j$ on each side.  Thus $C$ has
  $\sum_{i=1}^m \binom{2}{2} \left( \binom{(m-1)/2}{2} \cdot 4 + \binom{(m-1)/2}{2} \cdot 4 \right) = m(m-1)(m-3)$ convex quadrilaterals of Type II.
\item {\bf Type III.} For each $i$, $\sigma(i)$ is undefined and $L_i=R_i=\emptyset$. Thus $C$ has
  $\frac{1}{2} \sum_{i=1}^m \binom{2}{2}\left(\sum_{j\neq i} \binom{2}{2}\right)$ $ = \frac{1}{2} m(m-1)$ convex quadrilaterals in $C$ of Type III.
\item {\bf Types IV and V.} Since there are no clusters of size $3$ or larger, then $C$ has no convex quadrilaterals of Types IV or V.
\end{itemize}

Summing up the contributions of Types I, II, and III, it follows that $\square(C)=16\square(P) + (m/2)(2m^2-7m+5)$, as claimed.

Finally, we show that $C$ has a halving matching. If $\ell$ is a directed line that spans points $p$ and $q$, then $p$ is {\em before} $q$ {\em in} $\ell$ if as we traverse $\ell$, first we find $p$ and then $q$. Recall that in the last step in the construction we start with all points in each cluster $C_i$ lying on line $\ell_i$, and perturb them so that the order type of $C_i$ coincides with that of $S_i$.  Since here all clusters have size $2$, there is no need to perturb them: their final position may as well be on $\ell_i$. For each $p_i\in P$, we let $p_i', p_i''$ denote the two
points in $C$ into which $p_i$ get splitted, labelled so that $p_i'$ is before $p_i''$ in $\ell_i$. We assume without any loss of generality that all lines $\ell_i$ are directed so that their angles with the $x$-axis are between $0$ and $\pi$.

Now $\ell_i$ is clearly a halving line for every $i$. Thus we may associate $\ell_i$ to one of $p_i'$ and $p_i''$, and only need to seek a halving line to associate to the other point. We rotate $\ell_i$ counterclockwise around $p_i'$ until we hit another point in $C$ (say $q$), and let $\overline{\ell_i'}$ denote the line through $p_i'$ and $q$, with the direction it naturally inherits from $\ell_i$.
 If $q$ is before $p_i'$ in $\alli$, then let $\elli:=\alli$. Otherwise, let $\elli$ denote the line spanning $p_i''$ and $q$ with the orientation it naturally inherits  from $\ell_i$, that is, so that $q$ is before $p_i''$ in $\elli$. In either case, $\elli$ is a halving line that goes through one of $p_i'$ or $p_i''$.  We associate this halving line to the point in $\{p_i', p_i''\}$ belonging to it, and to the other point we associate $\ell_i$.  It is easily checked that if $i\neq j$, then $\elli \neq \ellj$ (and trivially $\ell_i \neq \ell_j$). Therefore this defines an injection from $C$ to the set of its halving lines. Thus $C$ has a halving matching, as claimed.  \hal

\vglue 0.4 cm

Now, we use Proposition~\ref{baseodd} to prove Theorem~\ref{baseoddite}, which together with Theorem 2 in~\cite{af1} gives
\begin{equation}\label{qstar}
q_* \le \frac{24\rcr(P) + 3m^3 - 7m^2 + (30/7)m}{m^4},
\end{equation}
for any $m$-set $P$ in general position with either $m$ odd, or $m$ even and $P$ with a halving matching.
\vglue 0.4 cm

\noindent{\textbf{Proof of Theorem~\ref{baseoddite}.}  We closely follow the proof of Theorem 2 in~\cite{af1}. (Note that Lemma 3 in~\cite{af1}, the equivalent to our Proposition~\ref{baseodd}, may also be derived from the construction in Section~\ref{gencon}).

Applying Proposition~\ref{baseodd} to $P_{-1}:=P$, we obtain an even cardinality point set $P_0$ with a halving matching. Thus, we can apply iteratively Lemma 3 in~\cite{af1} with $P_0$ as the base configuration. Then, for all $k>0$, if $P_k$ denotes the set obtained from $P_{k-1}$ using Lemma 3 in~\cite{af1}, we have
\begin{equation}\nonumber
\rcr(P_k) = 16 \rcr(P) + m^3 8^{k-1} (2^k -1) -\frac{7}{6}m^2 4^{k-1}
(4^k - 1) + \frac{5}{14} m 2^{k-1} (8^k - 1).
\end{equation}
Now by letting $n:=|P_k| = 2^k m$, we get
\begin{equation}\nonumber
\rcr(P_k ) = \left(\frac{24\rcr(P) + 3m^3 - 7m^2 +
(30/7)m}{24m^4}\right) n^4 - \frac{1}{8} n^3 + \frac{7}{24} n^2 -
\frac{5}{28} n. \ \hal
\end{equation}

We cannot overemphasize the importance of Theorem~\ref{baseoddite}
and Theorem 2 in~\cite{af1}: they constitute the best tools
available to obtain upper bounds for the rectilinear crossing number
constant $q_*$.  As of the time of writing, the best bound known for
$q_*$, namely
\begin{equation}\nonumber
q_* \leq \frac{83247328}{218791125}  < 0.380488,
\end{equation}
is obtained by applying
Theorem~\ref{baseoddite} to a particular drawing of $K_{315}$. See
Section~\ref{symdra}.

%%%%%%%%%%%%%%%%%%%%%%%%%%%%%%%%%%%%%%%%%%%%%%%%%%%%%%%%%%%%%%%%%%%%%%%%%%%%%%%%%%%%%%%%%%%

\section{Symmetric geometric drawings}\label{symdra}

The most fruitful and comprehensive effort to produce good geometric
drawings of $K_n$ is the Rectilinear Crossing Number Project, led by
Oswin Aichholzer~\cite{aweb2}.  Prior to the present work, the
drawings in~\cite{aweb2} constitute the state-of-the-art in the
subject: for every $n \le 100$, the previously best crossing-wise
geometric drawing of $K_n$ can be found in~\cite{aweb2}. A detailed
look at the information in~\cite{aweb2} shows that the {\em vast majority} of
drawing seems close to being $3$-symmetric.

We have successfully produced $3$-symmetric and $3$-decomposable
drawings that match or improve the best drawings reported
in~\cite{aweb2}. Our results are summarized as follows.

\begin{description}
\item{(1)}
For every positive integer $n < 100$, $n$ a multiple of  $3$, we
produced a $3$-symmetric and $3$-decomposable geometric drawing of
$K_n$ whose number of crossings is less than or equal to that
in~\cite{aweb2}. Some of these drawings were obtained using
heuristic methods based on previous drawings, and the rest using our
replacing-by-clusters construction in Section~\ref{gencon}. For a
brief summary of our results, see Table~\ref{summary}.
\item{(2)}
The best upper bound for the rectilinear crossing number constant
$q_*=\lim_{n\to\infty}\rcr(K_n)/\binom{n}{4}$ is now achieved by
$3$-symmetric and $3$-decomposable drawings. For this we apply
Theorem~\ref{baseoddite} to a $3$-symmetric and $3$-decomposable
drawing of $K_{315}$ with $152210640$  crossings, and recall
Remark~\ref{keep3s}.
\end{description}

Trying to produce $3$-symmetric geometric drawings of $K_{n}$ that
improve those of Aichholzer is a formidable task, specially for
large values of $n$. Prior to our work, no good crossing-wise
$3$-symmetric drawings had been reported, other than those for very
small values of $n$. For each positive integer $n$ multiple of 3, we
produced 3-symmetric drawings of $K_{n}$ whose number of crossings
is less than or equal to the previous best drawing. Our drawings are
optimal for $n\leq 27$~\cite{k27}, and we conjecture they are
optimal for $n=36$, $39$, and $45$. The drawings for $n\leq 57$,
with the exception of $n=33$, were obtained independently. A good
sample of these drawings is our $3$-symmetric drawing of $K_{24}$,
sketched in Figure~\ref{fig:k24}. The precise coordinates of the
eight points in one wing $W$ are: $ p_1 = (-51, 113); p_2 = (6,
834); p_3 = (16, 989); p_4 = (18, 644); p_5 =(18, 1068); p_6 = (22,
211); p_7 = (-26, 313); p_8 = (17, 1036)$. If $\theta$ denotes the
counterclockwise rotation of $2 \pi /3$ around the origin, then the
whole $24$-point set is $P=W\cup \theta(W) \cup \theta^2 (W)$.

\begin{figure}[ht]
\begin{center}
\includegraphics[width=6.5in]{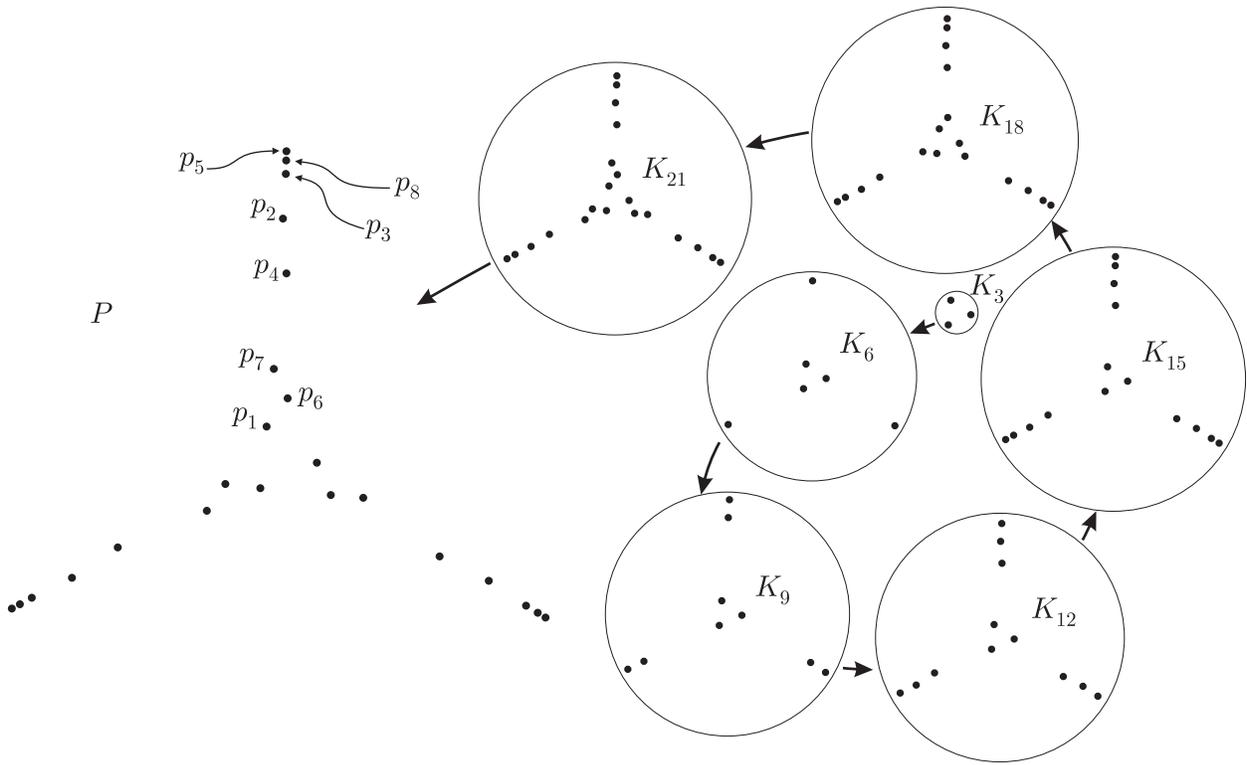}
\end{center}
\caption{The underlying vertex set of an optimal $3$-symmetric
geometric drawing of $K_{24}$. This point set contains optimal
nested $3$-symmetric drawings of $K_{21}, K_{18}, K_{15}, K_{12},
K_9,K_6$, and $K_3$.} \label{fig:k24}
\end{figure}

The geometric drawing induced by this point-set has $3699$
crossings, and is thus optimal~\cite{k27}. A remarkable property of
this drawing is that it contains a chain of optimal $3$-symmetric
subdrawings of $K_{21}, K_{18}, K_{15}, K_{12}, K_9,K_6$, and $K_3$.
Indeed, if $W_i=\{p_1,p_2,\ldots,p_i\}$ then the point-set $W_i\cup
\theta(W_i) \cup \theta^2 (W_i)$ is an optimal drawing of $K_{3i}$
for $1\leq i\leq 8$, that is, its number of crossings matches the
one known to be optimal (see~\cite{k27} and~\cite{agor}).

We also include $3$--symmetric drawings of 
$K_{27}$ and $K_{30}$ (Figure~\ref{fig:s27}), 
$K_{36}$ and $K_{39}$ (Figure~\ref{fig:s36}), and
$K_{45}$ (Figure~\ref{fig:s45}).  
The drawing of $K_{27}$ is known to be optimal~\cite{k27}. 
For reasons that are beyond the scope of
this work, we firmly believe that the given drawings of $K_{30}$,
$K_{36}$, $K_{39}$, and $K_{45}$  are also optimal. 

\begin{figure}[htb!]
\begin{center}
\includegraphics[width=6.5in]{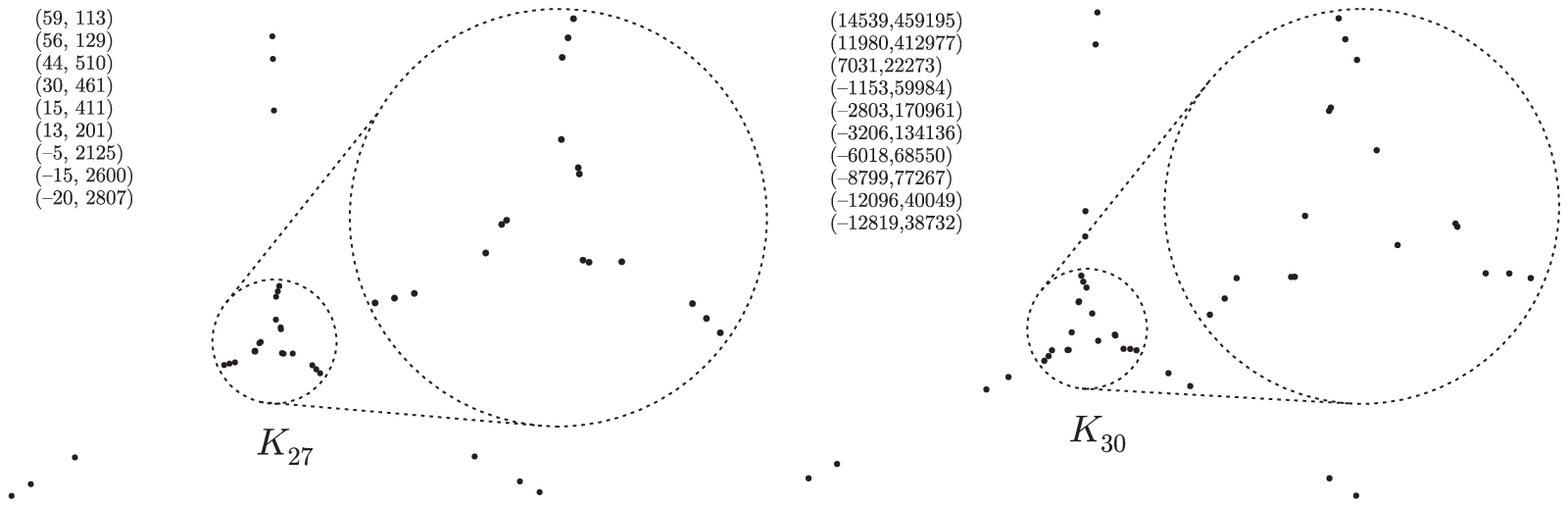}
\end{center}
\caption{The underlying vertex sets of $3$-symmetric
geometric drawings of $K_{27}$ (left) and $K_{30}$ (right). In each
case, the
coordinates given correspond to one third of the points; the other two
thirds are obtained by rotating the given set $120$ and $240$ degrees.
The
induced drawing of $K_{27}$ is known to be optimal, and we conjecture that the
induced drawing of $K_{30}$ is also optimal.} 
\label{fig:s27}
\end{figure}

\vglue 1 cm 

\begin{figure}[htb!]
\begin{center}
\includegraphics[width=6.5in]{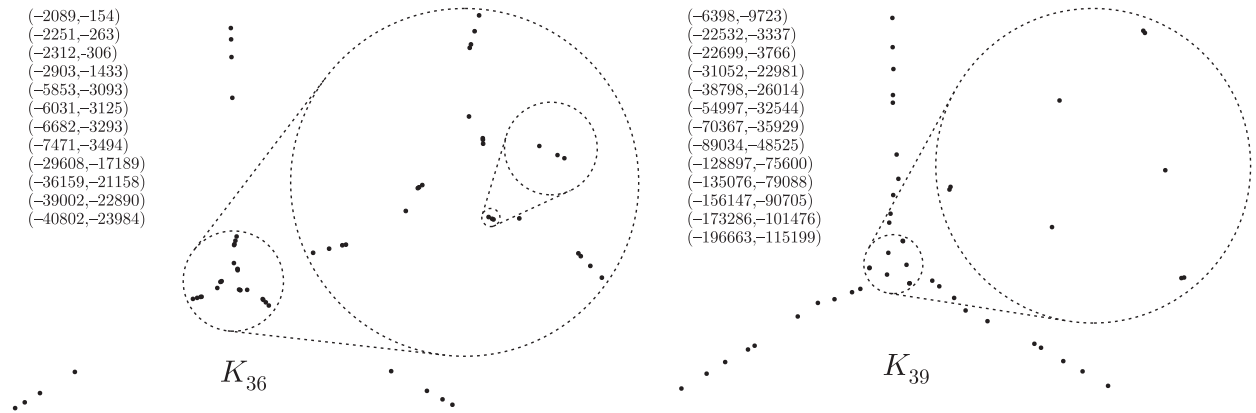}
\end{center}
\caption{The underlying vertex sets of $3$-symmetric
geometric drawings of $K_{36}$ (left) and $K_{39}$ (right), both of
which we conjecture are optimal. In each
case, the
coordinates given correspond to one third of the points; the other two
thirds are obtained by rotating the given set $120$ and $240$ degrees.}
\label{fig:s36}
\end{figure}

\vglue 1 cm

\begin{figure}[htb!]
\begin{center}
\includegraphics[height=2in]{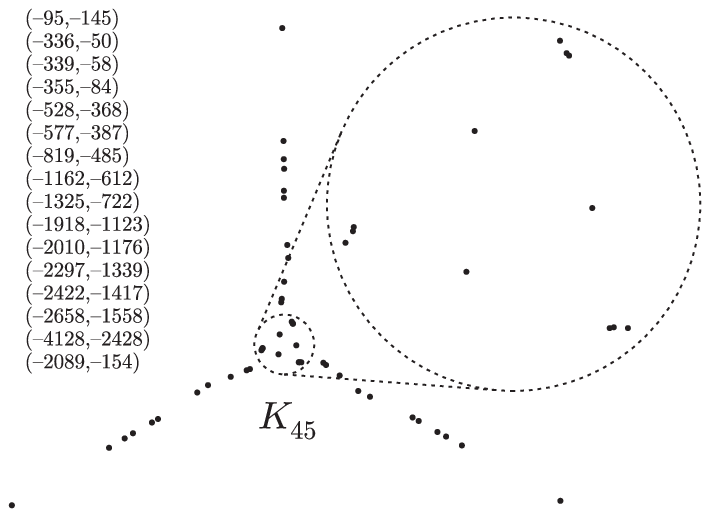}
\end{center}
\caption{The underlying vertex set of a $3$-symmetric
geometric drawing of $K_{45}$, which we conjecture is optimal. The
coordinates given correspond to one third of the points; the other two
thirds are obtained by rotating the given set $120$ and $240$ degrees.}
\label{fig:s45}
\end{figure}

% Different in extended version, the following paragraph:

In the Appendix we give $3$--symmetric drawings of
$K_{42}$, $K_{48}$, $K_{51}$, $K_{54}$, and $K_{57}$. 

% Different in extended version, the following paragraph is shorter
% here:

To obtain the drawings for $n\geq 60$, and for the special case
$n=33$, we use the construction in Section~\ref{gencon}. For each
such $K_{n}$, it suffices to give the base drawing $D_{m}$ for some
suitable $m<n$, the cluster models $S_{i}$, and a pre-halving set of
lines $\{\beta_i\}_{i\in I}$ for $D_{m}$. This determines the
information relevant to calculate the number of crossings of the
resulting drawing of $K_{n}$: the sizes of the clusters that lie to
the left of each line $\ell _{i}$, and the sizes of the sets $L_{i}$
and $R_{i}$ of the cluster (if any) that is splitted by $\ell _{i}$.
We use a base drawing of $K_{30}$ to obtain drawings for $K_{33}$
and $K_{60}$, and a base drawing of $K_{51}$ to obtain drawings of
$K_{n}$ with $60<n<100$ and for $n=315$.  

The details are given in the Appendix.

\renewcommand{\arraystretch}{0.85}

\begin{table}[h]
\centering
\begin{tabular}{|c|c|c|c|}\hline
 &{\small  Number of crossings}  & {\small Number of crossings}   &
  {\small How we obtained the}
  \\
{\small $n$} & {\small in previous}  & {\small in currently best} &
{\small drawing reported} \\
& {\small best drawing~\cite{aweb2}} & {\small $3$-symmetric drawing}
&
{\small in the third column} \\ \hline
\raisebox{-1ex}{\small $n \le 27$,} &   & &  \\
{\small $n$ divisible by $3$} &  %\raisebox{2.9ex}
\raisebox{1ex}{\small Optimal for
  each $n$} & \raisebox{1ex}{\small Optimal for each $n$}  & \raisebox{1ex}{\small Independently}  \\ \hline
 {\small 30} & {\small 9726} & {\small 9726} & {\small Independently} \\ \hline 
{\small 33} & {\small 14634} & {\small 14634} & {\small From $K_{30}$} \\ \hline
 {\small 36} &  {\small 21175} &  {\small 21174}  &  {\small Independently} \\ \hline
 {\small 39} &  {\small 29715} &  {\small 29715} &  {\small Independently} \\ \hline
 {\small 42} &  {\small 40595}&  {\small 40593}  &  {\small Independently} \\ \hline
 {\small 45} &  {\small 54213}&  {\small 54213} &  {\small Independently} \\ \hline
 {\small 48} &  {\small 71025}&  {\small 71022} &  {\small Independently} \\ \hline
 {\small 51} &  {\small 91452} & {\small 91452} &  {\small Independently} \\ \hline
 {\small 54} &  {\small 115994} &  {\small 115977} &  {\small Independently} \\ \hline
 {\small 57} &  {\small 145178}&  {\small 145176} &  {\small Independently} \\ \hline
 {\small 60 } & {\small  179541} & {\small  179541 } &  {\small From $K_{30}$} \\ \hline
 {\small 63 } & {\small  219683} & {\small  219681 } & {\small From $K_{51}$} \\ \hline
 {\small 66 } & {\small  266188} & {\small  266181  } & {\small From $K_{51}$}  \\ \hline
 {\small 69 } & {\small  319737} & {\small  319731 } & {\small From $K_{51}$}\\ \hline
 {\small 72 } & {\small  380978} & {\small  380964 } & {\small From $K_{51}$}\\ \hline
 {\small 75 } & {\small  450550} & {\small  450540 } & {\small From $K_{51}$}\\ \hline
 {\small 78 } & {\small  529350} & {\small  529332 } & {\small From $K_{51}$}\\ \hline
 {\small 81 } & {\small  618048} & {\small  618018 } & {\small From $K_{51}$}\\ \hline
 {\small 84 } & {\small  717384} & {\small  717360 } & {\small From $K_{51}$}\\ \hline
 {\small 87 } & {\small  828233} & {\small  828225 } & {\small From $K_{51}$}\\ \hline
 {\small 90 } & {\small  951526} & {\small  951459 } & {\small From $K_{51}$}\\ \hline
 {\small 93 } & {\small  1088217} & {\small     1088055 } & {\small From $K_{51}$}\\ \hline
 {\small 96 } & {\small  1239003} & {\small     1238646 } & {\small From $K_{51}$}\\ \hline
 {\small 99 } & {\small 1405132} & {\small  1404552 } & {\small From $K_{51}$}\\ \hline
 {\small 315 } & {\small --   } & {\small 152210640}& {\small From $K_{51}$}\\ \hline
\end{tabular}
\caption{\small For each $n < 100$, $n$ a multiple of $3$, we have
found a $3$-symmetric and $3$-decomposable drawing whose number of
crossings is less than or equals to the number of crossings in the previously best geometric drawing of $K_n$.
We also include our current record for $K_{315}$, the drawing that
gives, in combination with Theorem~\ref{baseoddite}, $q_* <
0.380488$.} \label{summary}
\end{table}

\clearpage

\section{Appendix}
%%%%%%%%%%%%%%%%%%%%%%%%%%%%%%%%%%%%%%%%%%%%%%%%%%%%%%%%%%%%

\subsection{A $3$--symmetric drawing of $K_{42}$ with $40593$ crossings}

\begin{figure}[htb!]
\begin{center}
\includegraphics[width=5in]{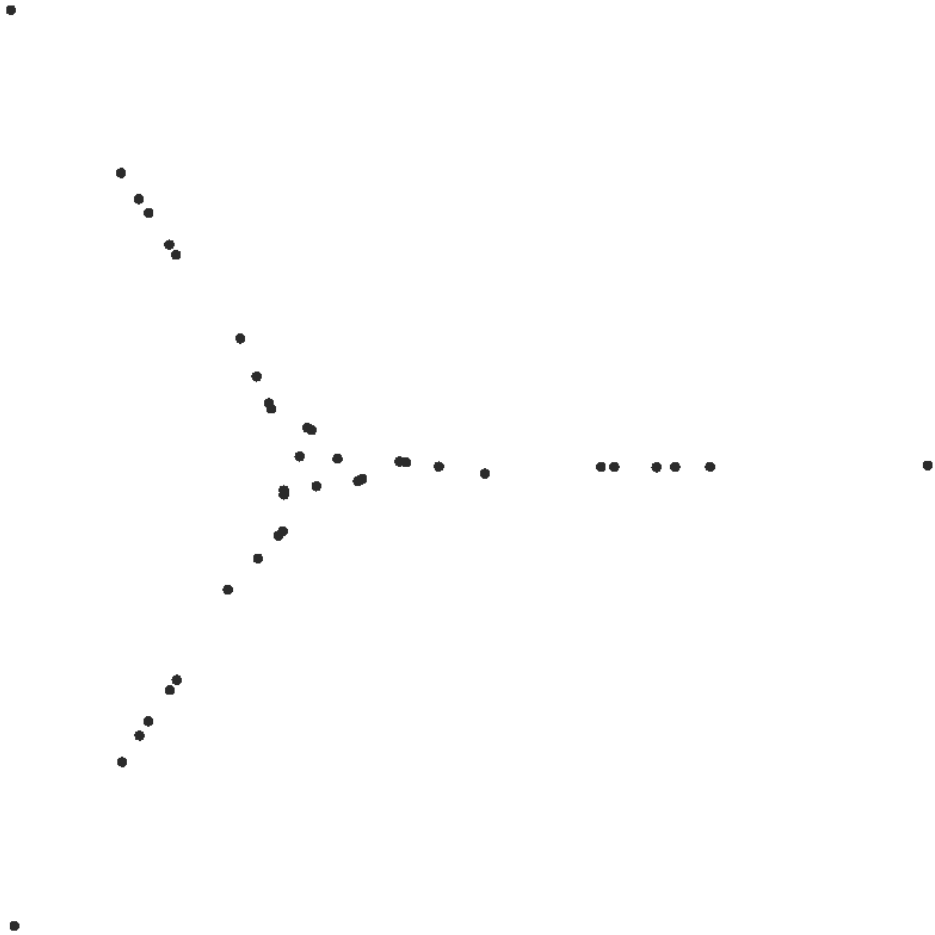}
\end{center}
\caption{The underlying vertex set of a $3$--symmetric
geometric drawing of $K_{42}$ with $40593$ crossings.} 
\label{fig:k42}
\end{figure}

Consider the $42$--point set obtained from the points
$p_{	1	}=(	620	,	308	)$, 
$p_{	2	}=(	1260	,	-504	)$, 
$p_{	3	}=(	1288	,	-482	)$, 
$p_{	4	}=(	1396	,	-427	)$, 
$p_{	5	}=(	2564	,	206	)$, 
$p_{	6	}=(	2775	,	173	)$, 
$p_{	7	}=(	3806	,	25	)$,
$p_{	8	}=(	5250	,	-229	)$, 
$p_{	9	}=(	8891	,	12	)$, 
$p_{	10	}=(	9315	,	10	)$, 
$p_{	11	}=(	10634	,	-6	)$, 
$p_{	12	}=(	11224	,	13	)$, 
$p_{	13	}=(	12322	,	21	)$, and 
$p_{	14	}=(	19157	,	64	)$,
plus the points obtained by rotating each of
these points $120$ and $240$ degrees around the origin.  See
Figure~\ref{fig:k42}. The induced geometric drawing of $K_{42}$ has
$40593$ crossings.

%%%%%%%%%%%%%%%%%%%%%%%%%%%%%%%%%%%%%%%%%%%%%%%%%%%%%%%%%%%%
\clearpage

\subsection{A $3$--symmetric drawing of $K_{48}$ with $71022$ crossings}

\begin{figure}[htb!]
\begin{center}
\includegraphics[width=5in]{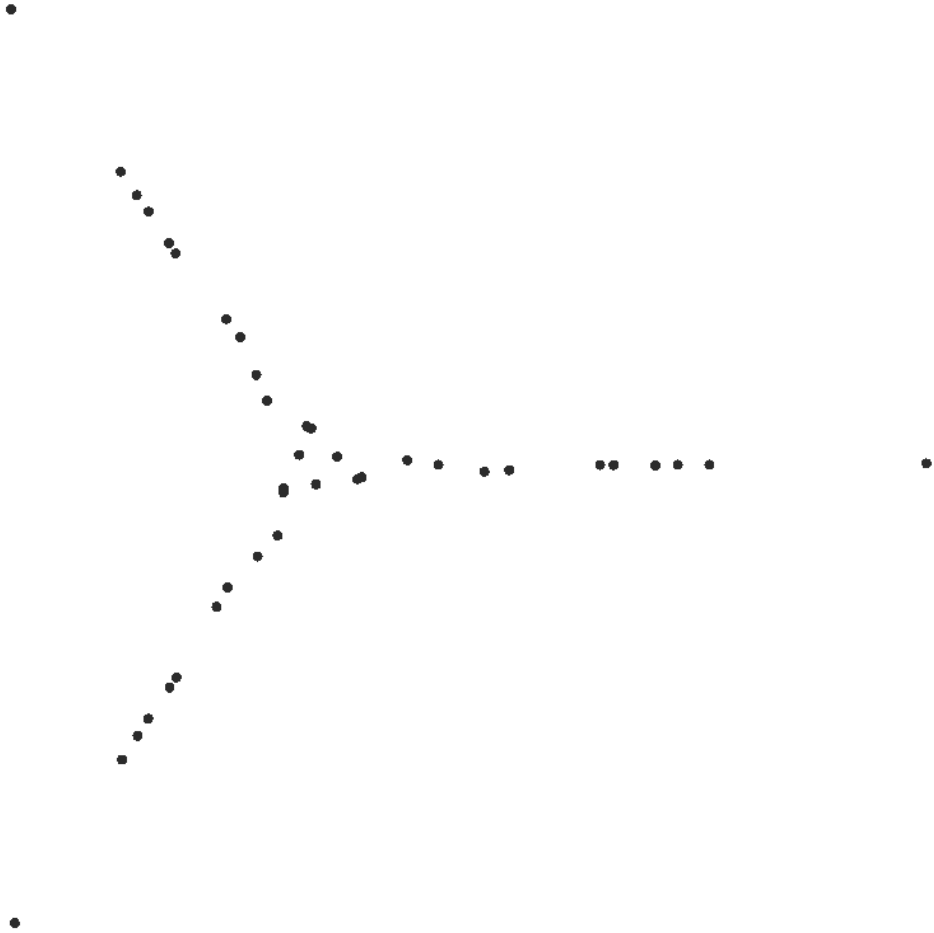}
\end{center}
\caption{The underlying vertex set of a $3$--symmetric
geometric drawing of $K_{48}$ with $71022$ crossings.} 
\label{fig:k48}
\end{figure}

Consider the $48$--point set obtained from the points
$p_{	1	}=(	-57807.48847	,	99345.28317	)$, 
$p_{	2	}=(	-57806.65857	,	99343.86617	)$, 
$p_{	3	}=(	-34105.90293	,	58848.08466	)$,
$p_{	4	}=(	-37110.08631	,	64005.82257	)$, 
$p_{	5	}=(	-31864.30787	,	55277.26387	)$, 
$p_{	6	}=(	-27997.58687	,	48376.53697	)$, 
$p_{	7	}=(	-26732.18287	,	46163.98867	)$, 
$p_{	8	}=(	-14558.27587	,	27959.08197	)$, 
$p_{	9	}=(	-17179.16207	,	31883.97347	)$, 
$p_{	10	}=(	-11528.14000	,	19697.46500	)$, 
$p_{	11	}=(	-9487.09731	,	14127.03628	)$, 
$p_{	12	}=(	-3461.52707	,	2301.65997	)$, 
$p_{	13	}=(	-3460.33257	,	2299.31657	)$, 
$p_{	14	}=(	-1969.55837	,	8536.56197	)$, 
$p_{	15	}=(	-1305.99477	,	8113.10777	)$, and  
$p_{	16	}=(	-1153.06188	,	8052.81507	)$,
plus the points obtained by rotating each of
these points $120$ and $240$ degrees around the origin.  See
Figure~\ref{fig:k48}. The induced geometric drawing of $K_{48}$ has
$71022$ crossings.

%%%%%%%%%%%%%%%%%%%%%%%%%%%%%%%%%%%%%%%%%%%%%%%%%%%%%%%%%%%%
\clearpage

\subsection{A $3$--symmetric drawing of $K_{51}$ with $91452$ crossings}

\begin{figure}[htb!]
\begin{center}
\includegraphics[width=5in]{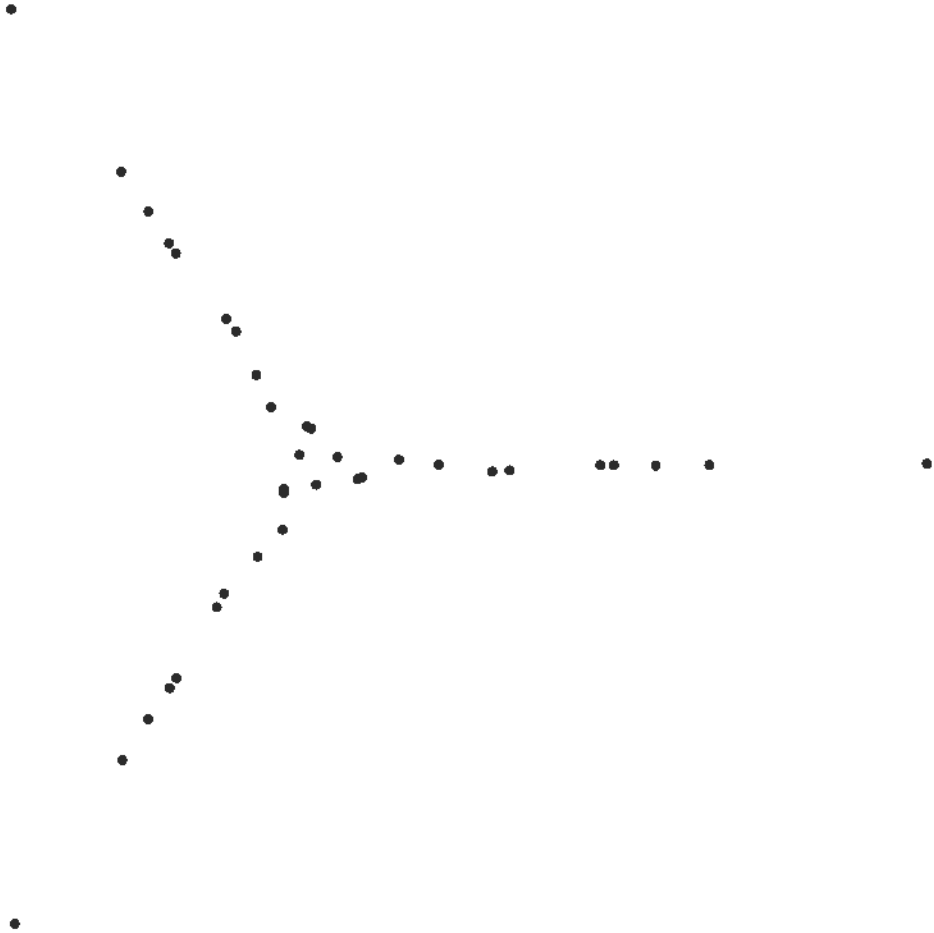}
\end{center}
\caption{The underlying vertex set of a $3$--symmetric
geometric drawing of $K_{51}$ with $91452$ crossings.} 
\label{fig:k51}
\end{figure}

Consider the $51$--point set obtained from the points $p_{ 1 }=(
3716.08787 , 1847.16703 )$, $p_{ 2 }=( 3723.66827 , 1846.89633 )$,
$p_{ 3 }=( 7559.84917 , -3018.73497 )$, $p_{ 4 }=( 7681.27767 ,
-2924.32337 )$, $p_{ 5 }=( 8372.80747 , -2555.43267 )$, $p_{ 6 }=(
15380.80127 , 1242.65413 )$, $p_{ 7 }=( 22830.08397 , 149.29793 )$,
$p_{ 8 }=( 22833.62767 , 150.01693 )$, $p_{ 9 }=( 32961.31257 ,
-1302.20837 )$, $p_{ 10 }=( 36202.07107 , -1066.09417 )$, $p_{ 11 }=(
53346.71877 , 75.35363 )$, $p_{ 12 }=( 55888.52997 , 69.24083 )$, $p_{
  13 }=( 63804.95917 , -36.22667 )$, $p_{ 14 }=( 63807.51607 ,
-36.12177 )$, $p_{ 15 }=( 73923.83417 , 125.04913 )$, $p_{ 16 }=(
73924.52987 , 125.05093 )$, and $p_{ 17 }=( 114944.97357 , 395.74573
)$, plus the points obtained by rotating each of these points $120$
and $240$ degrees around the origin.  See Figure~\ref{fig:k57}. The
induced geometric drawing of $K_{51}$ has $91452$ crossings.

%%%%%%%%%%%%%%%%%%%%%%%%%%%%%%%%%%%%%%%%%%%%%%%%%%%%%%%%%%%%
\clearpage

\subsection{A $3$--symmetric drawing of $K_{54}$ with $115977$ crossings}

\begin{figure}[htb!]
\begin{center}
\includegraphics[width=4.5in]{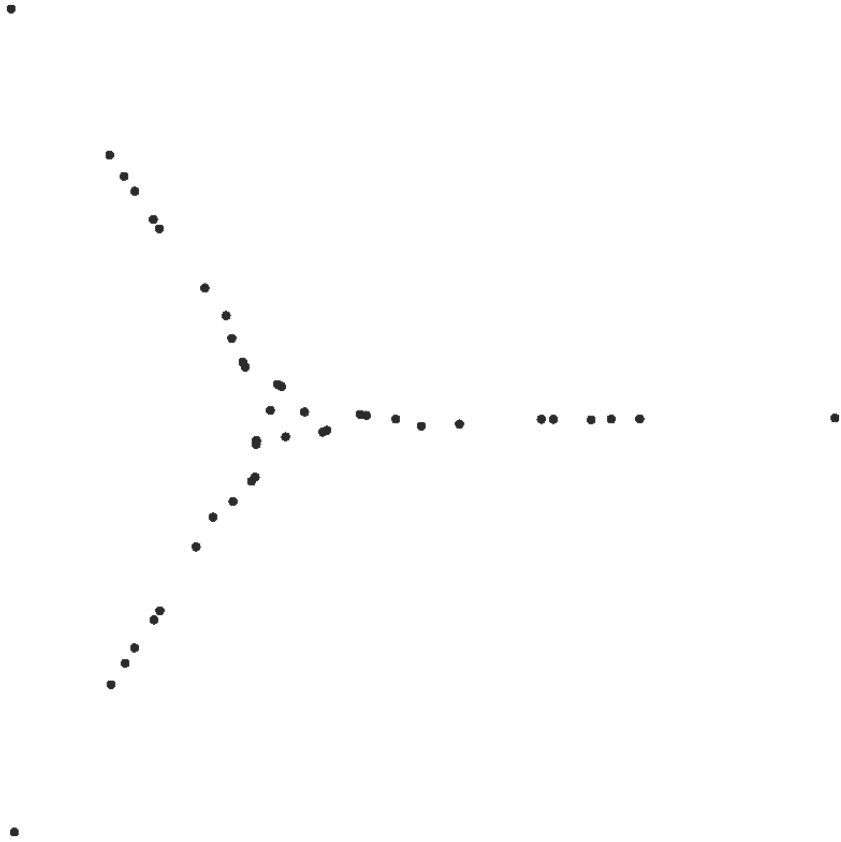}
\end{center}
\caption{The underlying vertex set of a $3$--symmetric
geometric drawing of $K_{54}$ with $115977$ crossings.} 
\label{fig:k54}
\end{figure}

Consider the $54$--point set obtained from the points
$p_{	1	}=(	-57807.48847	,	99345.28317	)$, 
$p_{	2	}=(	-57806.65857	,	99343.86617	)$, 
$p_{	3	}=(	-34105.90293	,	58848.08466	)$, 
$p_{	4	}=(	-37110.08631	,	64005.82257	)$, 
$p_{	5	}=(	-31864.30787	,	55277.26387	)$, 
$p_{	6	}=(	-27997.58687	,	48376.53697	)$, 
$p_{	7	}=(	-26732.18287	,	46163.98867	)$, 
$p_{	8	}=(	-17179.16207	,	31883.97347	)$, 
$p_{	9	}=(	-17177.09877	,	31880.90437	)$, 
$p_{	10	}=(	-12710.94699	,	25192.60584	)$, 
$p_{	11	}=(	-11528.14000	,	19697.46500	)$, 
$p_{	12	}=(	-9224.14377	,	13900.95197	)$, 
$p_{	13	}=(	-8764.40677	,	12704.76127	)$, 
$p_{	14	}=(	-3461.52707	,	2301.65997	)$, 
$p_{	15	}=(	-3460.33257	,	2299.31657	)$, 
$p_{	16	}=(	-1969.55837	,	8536.56197	)$, 
$p_{	17	}=(	-1305.99477	,	8113.10777	)$, and 
$p_{	18	}=(	-1153.06188	,	8052.81507	)$,
plus the points obtained by rotating each of these points $120$ and $240$ degrees around the origin.  See
Figure~\ref{fig:k54}. The induced geometric drawing of $K_{54}$ has $115977$ crossings.

%%%%%%%%%%%%%%%%%%%%%%%%%%%%%%%%%%%%%%%%%%%%%%%%%%%%%%%%%%%%
\clearpage

\subsection{A $3$--symmetric drawing of $K_{57}$ with $145176$ crossings}

\begin{figure}[htb!]
\begin{center}
\includegraphics[width=4.5in]{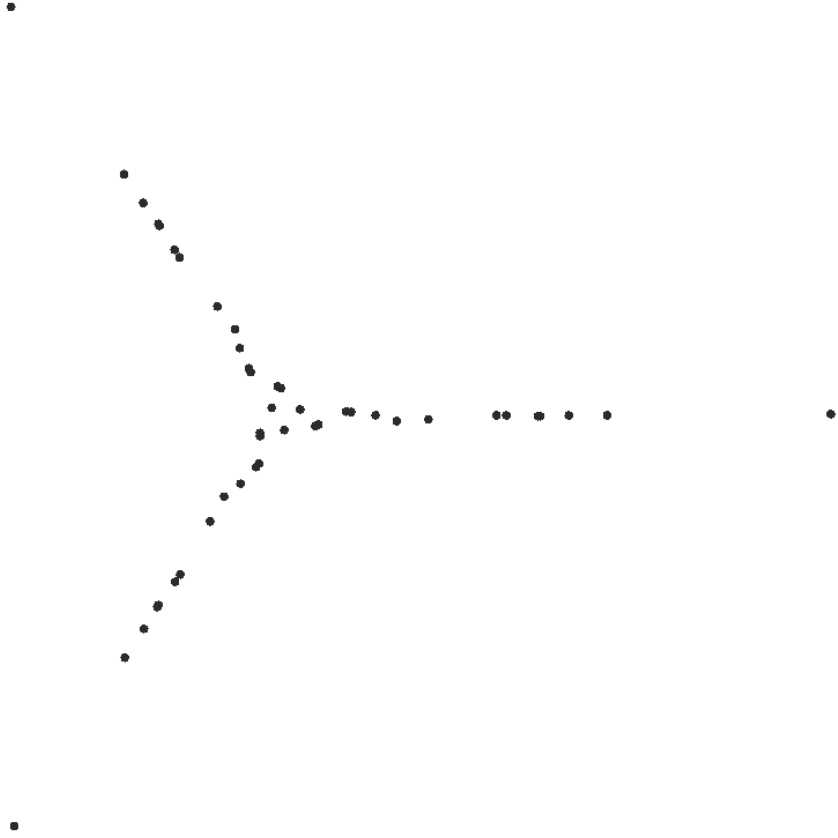}
\end{center}
\caption{The underlying vertex set of a $3$-symmetric
geometric drawing of $K_{57}$ with $145176$ crossings.} 
\label{fig:k57}
\end{figure}

Consider the $57$--point set obtained from the points
$p_{	1	}=(	-31817.67721	,	55426.14425	)$, 
$p_{	2	}=(	-69368.98616	,	119214.33860	)$, 
$p_{	3	}=(	-69367.99028	,	119212.63940	)$, 
$p_{	4	}=(	-40804.41177	,	70433.67069	)$, 
$p_{	5	}=(	-35943.52523	,	62061.44313	)$, 
$p_{	6	}=(	-32126,	55922	)$, 
$p_{	7	}=(	-28013.28687	,	48376.53697	)$, 
$p_{	8	}=(	-26778.48287	,	46163.98867	)$, 
$p_{	9	}=(	-17179.16207	,	31883.97347	)$, 
$p_{	10	}=(	-17177.09877	,	31880.90437	)$, 
$p_{	11	}=(	-12710.94699	,	25192.60584	)$, 
$p_{	12	}=(	-11528.14	,	19697.465	)$, 
$p_{	13	}=(	-9224.14377,	13900.95197	)$, 
$p_{	14	}=(	-8764.40677,	12704.76127	)$, 
$p_{	15	}=(	-3461.52707,	2301.65997	)$, 
$p_{	16	}=(	-3460.33257,	2299.31657	)$, 
$p_{	17	}=(	-1969.55837,	8536.56197	)$, 
$p_{	18	}=(	-1305.99477,	8113.10777	)$, and  
$p_{	19	}=(	-1153.06188,	8052.81507	)$,
plus the points obtained by rotating each of these points $120$ and $240$ degrees around the origin.  See
Figure~\ref{fig:k57}. The induced geometric drawing of $K_{57}$ has $145176$ crossings.

%%%%%%%%%%%%%%%%%%%%%%%%%%%%%%%%%%%%%%%%%%%%%%%%%%%%%%%%%%%%
\clearpage

\subsection{(How to construct) A drawing 
of $K_{315}$ with $152210640$ crossings}

We describe how to obtain a drawing of $K_{315}$ with $152210640$
crossings using the construction technique in Section~\ref{gencon}.
As explained at the end of Section~\ref{symdra},  
it suffices to give a base drawing $D_{m}$ for some
suitable $m<n$ (equivalently, the underlying point set $P_m$), 
the cluster models $S_{i}$, $i=1,\ldots,m$, and a pre--halving set of
lines $\{\beta_i\}_{i\in I}$ for those points in $P_{m}$ that get
transformed into a cluster.  In this case, we work with
a base set with $51$ points, that is, $m=51$.

These ingredients are given below. The result is a drawing of
$K_{315}$ with $152210640$ crossings.

\subsubsection{The base point configuration}\label{t315}

We use as base configuration a $51$--point set
$P=\{p_1,p_2,\ldots,p_{51}\}$.  We give explicitly the coordinates of
$17$ of the $51$ points, and obtain the remaining $34$ points by
rotating each of these points $120$ and $240$ degrees around the origin.

Thus, we let:
$p_1=  (114935.3031, 381.37451)$,
$p_2=  (73931.7862, 127.25511)$, 
$p_3=  (67347.3942, 75.62961)$,
$p_4=  (63815.8559, -37.63049)$, 
$p_5=  (55899.7316, 58.88221)$,
$p_6=  (53352.4837, 69.45451)$, 
$p_7=  (36214.634, -1062.97569)$,
$p_8=  (31509.8338, -1373.94309)$, 
$p_9=  (22847.349, 151.00411)$,
$p_{10}=  (17043.162, 1175.66911)$, 
$p_{11}=  (16655.0717, 1034.97731)$,
$p_{12}=(15393.4257, 1230.20761)$, 
$p_{13}=(8387.4352, -2549.11369)$,
$p_{14}=  (7690.1479, -2921.61509)$, 
$p_{15}=  (7573.2312, -3011.73969)$,
$p_{16}=  (3717.1198, 1845.13511)$, and 
$p_{17}=  (3714.3655, 1845.37901)$.

We also let:
$p_{18}=\theta^2(p_{17})$,
$p_{19}=\theta^2(p_{16})$, 
$p_{20}=\theta(p_{15})$,
$p_{21}=\theta(p_{14})$,
$p_{22}=\theta(p_{13})$,
$p_{23}=\theta(p_{17})$,
$p_{24}=\theta(p_{16})$,
$p_{25}=\theta^2(p_{14})$,
$p_{26}=\theta^2(p_{15})$,
$p_{27}=\theta^2(p_{13})$,
$p_{28}=\theta^2(p_{12})$,
$p_{29}=\theta^2(p_{11})$,
$p_{30}=\theta^2(p_{10})$,
$p_{31}=\theta(p_{12})$,
$p_{32}=\theta(p_{11})$,
$p_{33}=\theta(p_{10})$,
$p_{34}=\theta^2(p_{9})$,
$p_{35}=\theta(p_{9})$,
$p_{36}=\theta(p_{8})$,
$p_{37}=\theta^2(p_{8})$,
$p_{38}=\theta(p_{7})$,
$p_{39}=\theta^2(p_{7})$,
$p_{40}=\theta^2(p_{6})$,
$p_{41}=\theta(p_{6})$,
$p_{42}=\theta^2(p_{5})$,
$p_{43}=\theta(p_{5})$,
$p_{44}=\theta(p_{4})$,
$p_{45}=\theta^2(p_{4})$,
$p_{46}=\theta^2(p_{3})$,
$p_{47}=\theta(p_{3})$,
$p_{48}=\theta^2(p_{2})$,
$p_{49}=\theta(p_{2})$,
$p_{50}=\theta^2(p_{1})$, and
$p_{51}=\theta(p_{1})$.

\subsubsection{The cluster models}\label{whereais}

The cluster models for those points that do not get augmented or get
augmented into a cluster of size $2$ or $3$ are trivial (any point
sets in general position work). 
For all other cases (clusters
of size $n$, $4\le n \le 12$), we have used as cluster models the
underlying point sets $A_n$ of the drawings of $K_n$ 
given in~\cite{aweb2}.
 We remark that by using other cluster models,
slightly better results can be obtained.

\bigskip

For $i=3, 46$, and $47$, we let $S_i$ have one point (there is no need
to specify its coordinates, as we mentioned above).

For $i = 10, 30$, and $33$, we let $S_i$ have two points  (there is no need
to specify its coordinates, as we mentioned above).

For $i = 11, 16, 19, 24, 29$, and $32$, we let $S_i=A_4:= 
\{
     (0      ,16865),
     (41470      ,13435),
     ( 2213        ,  0),$ $
     (24229      ,14674)\}$. 

For $i = 6, 17, 18, 23, 40$, and $41$, we let $S_i=A_5:=\{
(    56337,      50707)
  (       0,      38814), $ $
   (  42575,          0),  $ $
    ( 51990,      40716), $ $
    ( 30815,      21467)\}$. 

For $i = 8, 14, 15, 20, 21, 25, 26, 36$, and $37$, we let $S_i=A_6:=\{
(     31913,      61624), $ $
(         0     , 39366), $ $
(    13197,      35824), $ $
(   49018 ,         0), $ $
(  27438 ,     48183), $ $
( 34377 ,     27824), $ $
\}$.   

For $i = 5, 7, 12, 28, 31, 38, 39, 42$, and $43$, we let $S_i=A_7:= \{
(     10881,      31696),  $ $
(     36061,       6218), $ $
(      5214 ,     39717), $ $
(         0,      59285), $ $
(      8359,      24119), $ $
(        59,      26990), $ $
(     44957,          0)
\}$. 

For $i = 2, 9, 13, 22, 27, 34, 35, 48$, and $49$, we let $S_i=A_8:=\{
(     55255 ,     59712), $ $
(     16631 ,     25552), $ $
(     23666 ,     43408), $ $
(     26741 ,     44334), $ $
(     15615 ,         0), $ $
(      3227  ,    56082), $ $
(         0  ,    62548), $ $
(     12393,      15412)
\}$. 

For $i = 4, 44$, and $45$, we let $S_i:=A_{9}:=\{
(      15928,      20352), $ $
(     22642,      16618), $ $
(      3049 ,         0), $ $
(     18325,      13804),$ $
(     32948,      11155), $ $
(     15236,      11815), $ $
 (        0     , 29904), $ $
 (    30218,      12585), $ $
 (     3815,      27123)
\}$. 

For $i = 1, 50$, and $51$, we let $S_i=A_{12}:=\{
(     13290     , 30827), $ $
(     45233    ,  24125), $ $
(     10217   ,   11859), $ $
(      6294   ,       0), $ $
 (        0     , 49579), $ $
  (   13699      ,33996), $ $
(      2314      ,46508),$ $
 (    16411    ,  17184), $ $
  (   29175   ,   22801), $ $
   (  52500  ,    24275), $ $
    ( 24447 ,     26182), $ $
     ( 8784 ,      6906)\}$.

Thus, the set $I$ of those subscripts $i$ such that $s_i:=|S_i|>1$ is 
$
I= \{1,2,\ldots,51\} \setminus \{3,46,47\}$. 

\subsubsection{A pre--halving set of lines}

We finally define a pre--halving set of lines 
$\{\beta_i\}_{i\in I}$.

\begin{enumerate}

\item Let $\beta_1$ be the line that goes through $p_1$ and $p_2$,
  directed from $p_1$ towards $p_2$. 
Thus $\beta_1$ is splitting.
\item Let $\beta_2$ be the line that goes through $p_2$ and $p_4$,
  directed from $p_2$ towards $p_4$. 
Thus $\beta_2$ is splitting.
\item Let $\beta_4$ be the line that goes through $p_4$ and $p_7$,
  directed from $p_4$ towards $p_7$. 
Thus $\beta_4$ is splitting.
\item Let $\beta_5$ be the line that goes through $p_5$ and $p_7$,
  directed from $p_4$ towards $p_7$. 
Thus $\beta_5$ is splitting.
\item Let $\beta_{6}$ be the line that goes through $p_{6}$ with slope $-0.0072$.
Thus $\beta_6$ is simple.
\item Let $\beta_{7}$ be the line that goes through $p_{7}$ with slope $ 0.0656 $.
Thus $\beta_7$ is simple.
\item Let $\beta_{8}$ be the line that goes through $p_{8}$ with slope $ -0.17 $.
Thus $\beta_8$ is simple.
\item Let $\beta_{9}$ be the line that goes through $p_{9}$ with slope $ -0.1763 $.
Thus $\beta_9$ is simple.
\item Let $\beta_{10}$ be the line that goes through $p_{10}$ with slope $ -0.04 $.
Thus $\beta_{10}$ is simple.
\item Let $\beta_{11}$ be the line that goes through $p_{11}$ with slope $ -0.3942 $.
Thus $\beta_{11}$ is simple.
\item Let $\beta_{12}$ be the line that goes through $p_{12}$ with slope $ -0.052668 $.
Thus $\beta_{12}$ is simple.
\item Let $\beta_{13}$ be the line that goes through $p_{13}$ with slope $  0.052 $.
Thus $\beta_{13}$ is simple.
\item Let $\beta_{14}$ be the line that goes through $p_{14}$ with slope $ -1.1994 $.
Thus $\beta_{14}$ is simple.
\item Let $\beta_{15}$ be the line that goes through $p_{15}$ with slope $ -1.2591 $.
Thus $\beta_{15}$ is simple.
\item Let $\beta_{16}$ be the line that goes through $p_{16}$ with slope $-0.07$.
Thus $\beta_{16}$ is simple.
\item Let $\beta_{17}$ be the line that goes through $p_{17}$ with slope $-1.35028010$.
Thus $\beta_{17}$ is simple.
\item Let $\beta_{18}$ be the line $\theta^2(\beta_{17}) $.
Thus $\beta_{18}$ is simple and goes through $p_{18}$.  
\item Let $\beta_{19}$ be the line $\theta^2(\beta_{16}) $.
Thus $\beta_{19}$ is simple and goes through $p_{19}$.  
\item Let $\beta_{20}$ be the line $\theta(\beta_{15}) $.
Thus $\beta_{20}$ is simple and goes through $p_{20}$.  
\item Let $\beta_{21}$ be the line $\theta(\beta_{14}) $.
Thus $\beta_{21}$ is simple and goes through $p_{21}$.  
\item Let $\beta_{22}$ be the line $\theta(\beta_{13}) $.
Thus $\beta_{22}$ is simple and goes through $p_{22}$.
\item Let $\beta_{23}$ be the line $\theta(\beta_{17}) $.
Thus $\beta_{23}$ is simple and goes through $p_{23}$.
\item Let $\beta_{	24	}$ be the line $\theta(\beta_{		16}) $.
Thus $\beta_{	24	}$ is simple and goes through $p_{	24	} $.
\item Let $\beta_{	25	}$ be the line $\theta^2(\beta_{	14	}) $.
Thus $\beta_{	25	}$ is simple and goes through $p_{	25	} $.
\item Let $\beta_{	26	}$ be the line $\theta^2(\beta_{	15	}) $.
Thus $\beta_{	26	}$ is simple and goes through $p_{	26	} $.
\item Let $\beta_{	27	}$ be the line $\theta^2(\beta_{ 13	}) $.
Thus $\beta_{	27	}$ is simple and goes through $p_{	27	} $.
\item Let $\beta_{	28	}$ be the line $\theta^2(\beta_{	12	}) $.
Thus $\beta_{	28	}$ is simple and goes through $p_{	28	} $.
\item Let $\beta_{	29	}$ be the line $\theta^2(\beta_{	11}) $.
Thus $\beta_{	29	}$ is simple and goes through $p_{	29	} $.
\item Let $\beta_{	30	}$ be the line $\theta^2(\beta_{	10	}) $.
Thus $\beta_{	30	}$ is simple and goes through $p_{	30	} $.
\item Let $\beta_{	31	}$ be the line $\theta(\beta_{	12	}) $.
Thus $\beta_{	31	}$ is simple and goes through $p_{	31	} $.
\item Let $\beta_{	32	}$ be the line $\theta(\beta_{	11	}) $.
Thus $\beta_{	32	}$ is simple and goes through $p_{	32	} $.
\item Let $\beta_{	33	}$ be the line $\theta(\beta_{	10	}) $.
Thus $\beta_{	33	}$ is simple and goes through $p_{	33	} $.
\item Let $\beta_{	34	}$ be the line $\theta^2(\beta_{	9	}) $.
Thus $\beta_{	34	}$ is simple and goes through $p_{	34	} $.
\item Let $\beta_{	35	}$ be the line $\theta(\beta_{	9	}) $.
Thus $\beta_{	35	}$ is simple and goes through $p_{	35	} $.
\item Let $\beta_{	36	}$ be the line $\theta(\beta_{	8	}) $.
Thus $\beta_{	36	}$ is simple and goes through $p_{	36	} $.
\item Let $\beta_{	37	}$ be the line $\theta^2(\beta_{	8	}) $.
Thus $\beta_{	37	}$ is simple and goes through $p_{	37	} $.
\item Let $\beta_{	38	}$ be the line $\theta(\beta_{	7	}) $.
Thus $\beta_{	38	}$ is simple and goes through $p_{	38	} $.
\item Let $\beta_{	39	}$ be the line $\theta^2(\beta_{	7	}) $.
Thus $\beta_{	39	}$ is simple and goes through $p_{	39	} $.
\item Let $\beta_{	40	}$ be the line $\theta^2(\beta_{	6	}) $.
Thus $\beta_{	40	}$ is simple and goes through $p_{	40	} $.
\item Let $\beta_{	41	}$ be the line $\theta(\beta_{	6	}) $.
Thus $\beta_{	41	}$ is simple and goes through $p_{	41	} $.
\item Let $\beta_{	42	}$ be the line $\theta^2(\beta_{	5	}) $.
Thus $\beta_{	42	}$ is splitting and goes through $p_{	42	} $ and $p_{39} $, directed from $p_{42}$ towards $p_{39}$.
\item Let $\beta_{	43	}$ be the line $\theta(\beta_{	5	}) $.
Thus $\beta_{	43	}$ is splitting and goes through $p_{	43
} $ and $p_{	38} $, directed from $p_{43}$ towards $p_{38}$.
\item Let $\beta_{	44	}$ be the line $\theta(\beta_{	4	}) $.
Thus $\beta_{	44	}$ is splitting and goes through $p_{	44	} $ and $p_{38} $, directed from $p_{44}$ towards $p_{38}$.
\item Let $\beta_{	45	}$ be the line $\theta^2(\beta_{	4	}) $.
Thus $\beta_{	45	}$ is splitting and goes through $p_{	45	} $ and $p_{39} $, directed from $p_{45}$ towards $p_{39}$.
\item Let $\beta_{	48	}$ be the line $\theta^2(\beta_{	2	}) $.
Thus $\beta_{	48	}$ is splitting and goes through $p_{	48	} $ and $p_{45	} $, directed from $p_{48}$ towards $p_{45}$.
\item Let $\beta_{	49	}$ be the line $\theta(\beta_{	2	}) $.
Thus $\beta_{	49	}$ is splitting and goes through $p_{	49	} $ and $p_{44} $, directed from $p_{49}$ towards $p_{44}$.
\item Let $\beta_{	50	}$ be the line $\theta^2(\beta_{	1	}) $.
Thus $\beta_{	50	}$ is splitting and goes through $p_{	50	} $ and $p_{48} $, directed from $p_{50}$ towards $p_{48}$.
\item Let $\beta_{	51	}$ be the line $\theta(\beta_{	1	}) $.
Thus $\beta_{	51	}$ is splitting and goes through $p_{	51	}$ and $p_{	49} $, directed from $p_{51}$ towards $p_{49}$. 
\end{enumerate}

\newpage

\subsection{(How to construct) A drawing 
of $K_{33}$ with $14634$ crossings}

We describe how to obtain a drawing of $K_{33}$ with $14634$
crossings using the construction technique in Section~\ref{gencon}.
As explained at the end of Section~\ref{symdra},  
it suffices to give a base drawing $D_{m}$ for some
suitable $m<n$ (equivalently, the underlying point set $P_m$), 
the cluster models $S_{i}$, $i=1,\ldots,m$, and a pre--halving set of
lines $\{\beta_i\}_{i\in I}$ for those points in $P_{m}$ that get
transformed into a cluster.  In this case, we work with
a base set with $30$ points, that is, $m=30$.

These ingredients are given below. The result is a drawing of
$K_{33}$ with $14634$ crossings.

\subsubsection{The base point configuration}\label{base30}

%We use as base configuration the  $30$--point set given in Figure:~\ref{fig:s27}.

We use as base configuration a $30$--point set
$P=\{p_1,p_2,\ldots,p_{30}\}$.  We give explicitly the coordinates of
$10$ of the $30$ points, and obtain the remaining $20$ points by
rotating each of these points $120$ and $240$ degrees around the origin.

Thus, we let:
$p_{	1	}= (	-500218.885	,	793018.474	)$,
$p_{	2	}= (	-451723.944	,	711948.989	)$,
$p_{	5	}= (	-200125.330	,	285855.310	)$,
$p_{	6	}= (	-158721.037	,	223132.241	)$,
$p_{	9	}= (	-103183.924	,	120586.624	)$,
$p_{	10	}= (	-88519.236	,	109026.774	)$,
$p_{11	}= (	-70502.886	,	100103.259	)$,
$p_{	12	}= (	-66221.918	,	53889.958	)$,
$p_{	13}= (	-65940.116	,	50836.878	)$, and 
$p_{	18	}= (	-13567.216	,	45695.226	)$.

We also let:
$p_{3}=\theta(p_{1})$,
$p_{4}=\theta(p_{2})$,
$p_{7}=\theta(p_{5})$,
$p_{8}=\theta(p_{6})$,
$p_{14}=\theta(p_{9})$,
$p_{16}=\theta(p_{10})$,
$p_{15}=\theta(p_{11})$,
$p_{19}=\theta(p_{12})$,
$p_{20}=\theta(p_{13})$,
$p_{17}=\theta(p_{18})$,
$p_{30}=\theta^2(p_{1})$,
$p_{29}=\theta^2(p_{2})$,
$p_{28}=\theta^2(p_{5})$,
$p_{27}=\theta^2(p_{6})$,
$p_{26}=\theta^2(p_{9})$,
$p_{25}=\theta^2(p_{10})$,
$p_{24}=\theta^2(p_{11})$,
$p_{23}=\theta^2(p_{12})$,
$p_{22}=\theta^2(p_{13})$,
$p_{21}=\theta^2(p_{18})$,

\subsubsection{The cluster models}

The cluster models for those points that do not get augmented or get
augmented into a cluster of size $2$ are trivial (any point
set in general position work). 

For $i = 1, 2, 3, 4, 5, 6, 7, 8, 10, 11, 12, 13, 15, 16, 17, 18, 19, 20, 21, 22, 23, 24, 25, 27, 28, 29$, an $30 $, we let $S_i$ have one point  (there is no need
to specify its coordinates, as we mentioned above).

For $i = 9, 14, 26$, we let $S_i$ have two points  (there is no need
to specify its coordinates, as we mentioned above).

Thus, the set $I$ of those subscripts $i$ such that $s_i:=|S_i|>1$ is 
$I=\{ 9, 14, 26 \}$. 

\subsubsection{A pre--halving set of lines}

We finally define a pre--halving set of lines 
$\{\beta_i\}_{i\in I}$.

\begin{enumerate}

\item Let $\beta_{9}$ be the line that goes through $p_{9}$ with slope
$  -2 $.
Thus $\beta_9$ is simple.
\item Let $\beta_{	14	}$ be the line $\theta(\beta_{	9	}) $.
Thus $\beta_{	14	}$ is simple and goes through $p_{	14	} $.
\item Let $\beta_{	26	}$ be the line $\theta^2(\beta_{	9	}) $.
Thus $\beta_{	26	}$ is simple and goes through $p_{	26	} $.

\end{enumerate}

\clearpage

\subsection{(How to construct) A drawing 
of $K_{60}$ with $179541$ crossings}

We describe how to obtain a drawing of $K_{60}$ with $179541$
crossings using the construction technique in Section~\ref{gencon}.
As explained at the end of Section~\ref{symdra},  
it suffices to give a base drawing $D_{m}$ for some
suitable $m<n$ (equivalently, the underlying point set $P_m$), 
the cluster models $S_{i}$, $i=1,\ldots,m$, and a pre--halving set of
lines $\{\beta_i\}_{i\in I}$ for those points in $P_{m}$ that get
transformed into a cluster.  In this case, we work with
a base set with $30$ points, that is, $m=30$.

These ingredients are given below. The result is a drawing of
$K_{60}$ with $179541$ crossings.

\subsubsection{The base point configuration}

We use as base configuration the $30$--point set
$P=\{p_1,p_2,\ldots,p_{30}\}$ from Section~\ref{base30}.  

\subsubsection{The cluster models}

The cluster models for those points that do not get augmented or get
augmented into a cluster of size $2$ or $3$ are trivial (any point
sets in general position work). Since all clusters in this case are of
size $1$, $2$, or $3$, the description is greatly simplified in this
case:

For $i = 1, 2, \ldots, 30$, we let $S_i$ have two points  (there is no need
to specify its coordinates, as we mentioned above).

Thus, the set $I$ of those subscripts $i$ such that $s_i:=|S_i|>1$ is 
$I=\{1, 2, \ldots, 30\}$. 

\subsubsection{A pre--halving set of lines}

We finally define a pre--halving set of lines 
$\{\beta_i\}_{i\in I}$.

\begin{enumerate}

\item Let $\beta_{ 1 }$ be the line that goes through $p_{ 1 }$ and $p_{
2 }$,
 directed from $p_{ 1 }$ towards $p_{ 2 }$.
\item Let $\beta_{ 2 }$ be the line that goes through $p_{ 2 }$ and $p_{
5 }$,
 directed from $p_{ 2 }$ towards $p_{ 5 }$.
\item Let $\beta_{    3    }$ be the line $\theta(\beta_{    1    }) $.
Thus $\beta_{    3    }$ is splitting and goes through $p_{    3    }$
and $p_{     4 } $, directed from $p_{ 3 }$ towards $p_{ 4 }$.
\item Let $\beta_{    4    }$ be the line $\theta(\beta_{    2    }) $.
Thus $\beta_{    4    }$ is splitting and goes through $p_{    4    }$
and $p_{    7  } $, directed from $p_{ 4 }$ towards $p_{ 7 }$.
\item Let $\beta_{ 5 }$ be the line that goes through $p_{ 5 }$ and $p_{
9 }$,
 directed from $p_{ 5 }$ towards $p_{ 9 }$.
\item Let $\beta_{ 6 }$ be the line that goes through $p_{ 6 }$ and $p_{
10 }$,
 directed from $p_{ 6 }$ towards $p_{ 10 }$.

\item Let $\beta_{    7    }$ be the line $\theta(\beta_{    5    }) $.
Thus $\beta_{    7    }$ is splitting and goes through $p_{    7    }$
and $p_{     14 } $, directed from $p_{ 7 }$ towards $p_{ 14 }$.

\item Let $\beta_{    8    }$ be the line $\theta(\beta_{    6    }) $.
Thus $\beta_{    8    }$ is splitting and goes through $p_{    8    }$
and $p_{    16  } $, directed from $p_{ 8 }$ towards $p_{ 16 }$.

\item Let $\beta_{ 9 }$ be the line that goes through $p_{ 9 }$ and $p_{
10 }$,
 directed from $p_{ 9 }$ towards $p_{ 10 }$.

\item Let $\beta_{ 10 }$ be the line that goes through $p_{ 10 }$ and
$p_{ 11 }$,
 directed from $p_{ 10 }$ towards $p_{ 11 }$.

\item Let $\beta_{ 11 }$ be the line that goes through $p_{ 11 }$ and
$p_{ 12 }$,
 directed from $p_{ 11 }$ towards $p_{ 12 }$.

\item Let $\beta_{ 12 }$ be the line that goes through $p_{ 12 }$ and
$p_{ 13 }$,
 directed from $p_{ 12 }$ towards $p_{ 13 }$.

\item Let $\beta_{ 13 }$ be the line that goes through $p_{ 10 }$ and
$p_{ 13 }$,
 directed from $p_{ 13 }$ towards $p_{ 10 }$.

\item Let $\beta_{    14    }$ be the line $\theta(\beta_{    9    }) $.
Thus $\beta_{    14    }$ is splitting and goes through $p_{    14    }$
and $p_{    16  } $, directed from $p_{ 14 }$ towards $p_{ 16 }$.

\item Let $\beta_{    15    }$ be the line $\theta(\beta_{    11    }) $.
Thus $\beta_{    15    }$ is splitting and goes through $p_{    15    }$
and $p_{     19 } $, directed from $p_{ 15 }$ towards $p_{ 19 }$.

\item Let $\beta_{    16    }$ be the line $\theta(\beta_{    10    }) $.
Thus $\beta_{    16    }$ is splitting and goes through $p_{    15    }$
and $p_{    16  } $, directed from $p_{ 16 }$ towards $p_{ 15 }$.

\item Let $\beta_{    17    }$ be the line $\theta(\beta_{    18    }) $.
Thus $\beta_{    17    }$ is splitting and goes through $p_{    17    }$
and $p_{    19  } $, directed from $p_{ 17 }$ towards $p_{ 19 }$.

\item Let $\beta_{ 18 }$ be the line that goes through $p_{ 12 }$ and
$p_{ 18 }$,
 directed from $p_{ 18 }$ towards $p_{ 12 }$.

\item Let $\beta_{    19    }$ be the line $\theta(\beta_{    12    }) $.
Thus $\beta_{    19    }$ is splitting and goes through $p_{    19    }$
and $p_{    20  } $, directed from $p_{ 19 }$ towards $p_{ 20 }$.

\item Let $\beta_{    20    }$ be the line $\theta(\beta_{    13    }) $.
Thus $\beta_{    20    }$ is splitting and goes through $p_{    16    }$
and $p_{    20  } $, directed from $p_{ 20 }$ towards $p_{ 16 }$.

\item Let $\beta_{    21    }$ be the line $\theta^2(\beta_{    18    }) $.
Thus $\beta_{    21    }$ is splitting and goes through $p_{    21    }
$ and $p_{ 23 } $, directed from $p_{ 21 }$ towards $p_{ 23 }$.

\item Let $\beta_{    22    }$ be the line $\theta^2(\beta_{    13    }) $.
Thus $\beta_{    22    }$ is splitting and goes through $p_{    22    }
$ and $p_{ 25 } $, directed from $p_{ 22 }$ towards $p_{ 25 }$.

\item Let $\beta_{    23    }$ be the line $\theta^2(\beta_{    12    }) $.
Thus $\beta_{    23    }$ is splitting and goes through $p_{    22    }
$ and $p_{ 23 } $, directed from $p_{ 23 }$ towards $p_{ 22 }$.

\item Let $\beta_{    24    }$ be the line $\theta^2(\beta_{    11    }) $.
Thus $\beta_{    24    }$ is splitting and goes through $p_{    23    }
$ and $p_{ 24 } $, directed from $p_{ 24 }$ towards $p_{ 23 }$.

\item Let $\beta_{    25    }$ be the line $\theta^2(\beta_{    10    }) $.
Thus $\beta_{    25    }$ is splitting and goes through $p_{    24    }
$ and $p_{ 25 } $, directed from $p_{ 25 }$ towards $p_{ 24 }$.

\item Let $\beta_{    26    }$ be the line $\theta^2(\beta_{    9    }) $.
Thus $\beta_{    26    }$ is splitting and goes through $p_{    25    }
$ and $p_{ 26 } $, directed from $p_{ 26 }$ towards $p_{ 25 }$.

\item Let $\beta_{    27    }$ be the line $\theta^2(\beta_{    6    }) $.
Thus $\beta_{    27    }$ is splitting and goes through $p_{    25    }
$ and $p_{ 27 } $, directed from $p_{ 27 }$ towards $p_{ 25 }$.

\item Let $\beta_{    28    }$ be the line $\theta^2(\beta_{    5    }) $.
Thus $\beta_{    28    }$ is splitting and goes through $p_{    26    }
$ and $p_{ 28 } $, directed from $p_{ 28 }$ towards $p_{ 26 }$.

\item Let $\beta_{    29    }$ be the line $\theta^2(\beta_{    2    }) $.
Thus $\beta_{    29    }$ is splitting and goes through $p_{    28    }
$ and $p_{ 29 } $, directed from $p_{ 29 }$ towards $p_{ 28 }$.

\item Let $\beta_{    30    }$ be the line $\theta^2(\beta_{    1    }) $.
Thus $\beta_{    30    }$ is splitting and goes through $p_{    29    }
$ and $p_{ 30 } $, directed from $p_{ 30 }$ towards $p_{ 29 }$.

\end{enumerate}

%%%%%

\clearpage

\subsection{(How to construct) A drawing 
of $K_{63}$ with $219681$ crossings}

We describe how to obtain a drawing of $K_{63}$ with $219681$
crossings using the construction technique in Section~\ref{gencon}.
As explained at the end of Section~\ref{symdra},  
it suffices to give a base drawing $D_{m}$ for some
suitable $m<n$ (equivalently, the underlying point set $P_m$), 
the cluster models $S_{i}$, $i=1,\ldots,m$, and a pre--halving set of
lines $\{\beta_i\}_{i\in I}$ for those points in $P_{m}$ that get
transformed into a cluster.  In this case, we work with
a base set with $51$ points, that is, $m=51$.

These ingredients are given below. The result is a drawing of
$K_{63}$ with $219681$ crossings.

\subsubsection{The base point configuration}

We use as base configuration the $51$--point set
$P=\{p_1,p_2,\ldots,p_{51}\}$ from Section~\ref{t315}.  

\subsubsection{The cluster models}

The cluster models for those points that do not get augmented or get
augmented into a cluster of size $2$ or $3$ are trivial (any point
sets in general position work). Since all clusters in this case are of
size $1$, $2$, or $3$, the description is greatly simplified in this
case:

For $i= 1 , 2 , 5 , 6 , 8 , 9 , 10 , 11 , 12 , 14 , 15 , 16 , 17 , 18 , 19 , 20 , 21 , 23 , 24 , 25 , 26 , 28 , 29 , 30 , 31 , 32, 33 , 34 $,
$35 ,36 , 37 , 40 , 41 , 42 , 43 , 48 , 49 , 50$ and $51$ we let $S_i$ have one point (there is no need
to specify its coordinates, as we mentioned above).

For $i = 3, 4, 7, 13, 22, 27, 38, 39, 44, 45, 46$ and $47$, we let $S_i$ have two points  (there is no need
to specify its coordinates, as we mentioned above).

Thus, the set $I$ of those subscripts $i$ such that $s_i:=|S_i|>1$ is $I=\{ 3, 4, 7, 13$, 
$22, 27, 38, 39, 44, 45, 46, 47 \}$. 

\subsubsection{A pre--halving set of lines}

We finally define a pre--halving set of lines 
$\{\beta_i\}_{i\in I}$.

\begin{enumerate}

\item Let $\beta_{3}$ be the line that goes through $p_{3}$ with slope $0.001$.
Thus $\beta_3$ is simple.
\item Let $\beta_{4}$ be the line that goes through $p_{4}$ with slope $-0.02$.
Thus $\beta_4$ is simple.
\item Let $\beta_{7}$ be the line that goes through $p_{7}$ with slope $-0.1$.
Thus $\beta_7$ is simple.
\item Let $\beta_{13}$ be the line that goes through $p_{13}$ with slope $-1$.
Thus $\beta_{13}$ is simple.
\item Let $\beta_{22}$ be the line $\theta(\beta_{13}) $.
Thus $\beta_{22}$ is simple and goes through $p_{22}$.  
\item Let $\beta_{27}$ be the line $\theta^2(\beta_{13}) $.
Thus $\beta_{27}$ is simple and goes through $p_{27}$.  
\item Let $\beta_{38}$ be the line $\theta(\beta_{7}) $.
Thus $\beta_{38}$ is simple and goes through $p_{38}$.  
\item Let $\beta_{39}$ be the line $\theta^2(\beta_{7}) $.
Thus $\beta_{39}$ is simple and goes through $p_{39}$.  
\item Let $\beta_{44}$ be the line $\theta(\beta_{4}) $.
Thus $\beta_{44}$ is simple and goes through $p_{44}$.  
\item Let $\beta_{45}$ be the line $\theta^2(\beta_{4}) $.
Thus $\beta_{45}$ is simple and goes through $p_{45}$.  
\item Let $\beta_{46}$ be the line $\theta^2(\beta_{3}) $.
Thus $\beta_{46}$ is simple and goes through $p_{46}$.  
\item Let $\beta_{47}$ be the line $\theta(\beta_{3}) $.
Thus $\beta_{47}$ is simple and goes through $p_{47}$.

\end{enumerate}

\clearpage

\subsection{(How to construct) A drawing 
of $K_{66}$ with $266181$ crossings}

We describe how to obtain a drawing of $K_{63}$ with $266181$
crossings using the construction technique in Section~\ref{gencon}.
As explained at the end of Section~\ref{symdra},  
it suffices to give a base drawing $D_{m}$ for some
suitable $m<n$ (equivalently, the underlying point set $P_m$), 
the cluster models $S_{i}$, $i=1,\ldots,m$, and a pre--halving set of
lines $\{\beta_i\}_{i\in I}$ for those points in $P_{m}$ that get
transformed into a cluster.  In this case, we work with
a base set with $51$ points, that is, $m=51$.

These ingredients are given below. The result is a drawing of
$K_{63}$ with $266181$ crossings.

\subsubsection{The base point configuration}

We use as base configuration the $51$--point set
$P=\{p_1,p_2,\ldots,p_{51}\}$ from Section~\ref{t315}.  

\subsubsection{The cluster models}

The cluster models for those points that do not get augmented or get
augmented into a cluster of size $2$ or $3$ are trivial (any point
sets in general position work). Since all clusters in this case are of
size $1$, $2$, or $3$, the description is greatly simplified in this
case:

For $i=1 , 2 , 5 , 7 , 9 , 10 , 11 , 12 , 13 , 15 , 16 , 17 , 18 , 19 , 20 , 22 , 23 , 24 , 26 , 27 , 28 , 29 , 30 , 31 , 32 , 33 , 34$, 
$35 , 38 , 39 , 42 , 43 , 48 , 49 , 50$, and $51$ we let $S_i$ have one point (there is no need
to specify its coordinates, as we mentioned above).

For $i =3, 4, 6, 8, 14, 21, 25, 36, 37, 40, 41, 44, 45, 46$, and $47 $, we let $S_i$ have two points  (there is no need
to specify its coordinates, as we mentioned above).

Thus, the set $I$ of those subscripts $i$ such that $s_i:=|S_i|>1$ is 
$I=\{ 3, 4, 6, 8, 14, 21, 25, 36, 37, 40, 41, 44, 45, 46, 47\}$. 

\subsubsection{A pre--halving set of lines}

We finally define a pre--halving set of lines 
$\{\beta_i\}_{i\in I}$.

\begin{enumerate}

\item Let $\beta_{3}$ be the line that goes through $p_{3}$ with slope $0.001$.
Thus $\beta_3$ is simple.
\item Let $\beta_{4}$ be the line that goes through $p_{4}$ with slope $-0.011$.
Thus $\beta_4$ is simple.
\item Let $\beta_{6}$ be the line that goes through $p_{6}$ with slope $-0.0305$.
Thus $\beta_6$ is simple.
\item Let $\beta_{8}$ be the line that goes through $p_{8}$ with slope $-0.17$.
Thus $\beta_8$ is simple.
\item Let $\beta_{14}$ be the line that goes through $p_{14}$ with slope $-1.21$.
Thus $\beta_{14}$ is simple.
\item Let $\beta_{21}$ be the line $\theta(\beta_{14}) $.
Thus $\beta_{21}$ is simple and goes through $p_{21}$.  
\item Let $\beta_{25}$ be the line $\theta^2(\beta_{14}) $.
Thus $\beta_{25}$ is simple and goes through $p_{25}$.  
\item Let $\beta_{36}$ be the line $\theta(\beta_{8}) $.
Thus $\beta_{36}$ is simple and goes through $p_{37}$.  
\item Let $\beta_{37}$ be the line $\theta^2(\beta_{8}) $.
Thus $\beta_{37}$ is simple and goes through $p_{37}$.  
\item Let $\beta_{40}$ be the line $\theta^2(\beta_{6}) $.
Thus $\beta_{40}$ is simple and goes through $p_{40}$.  
\item Let $\beta_{41}$ be the line $\theta(\beta_{6}) $.
Thus $\beta_{41}$ is simple and goes through $p_{41}$.  
\item Let $\beta_{44}$ be the line $\theta(\beta_{4}) $.
Thus $\beta_{44}$ is simple and goes through $p_{44}$.  
\item Let $\beta_{45}$ be the line $\theta^2(\beta_{4}) $.
Thus $\beta_{45}$ is simple and goes through $p_{45}$.  
\item Let $\beta_{46}$ be the line $\theta^2(\beta_{3}) $.
Thus $\beta_{46}$ is simple and goes through $p_{46}$.  
\item Let $\beta_{47}$ be the line $\theta(\beta_{3}) $.
Thus $\beta_{47}$ is simple and goes through $p_{47}$.  

\end{enumerate}

\clearpage

\subsection{(How to construct) A drawing 
of $K_{69}$ with $319731$ crossings}

We describe how to obtain a drawing of $K_{69}$ with $319731$
crossings using the construction technique in Section~\ref{gencon}.
As explained at the end of Section~\ref{symdra},  
it suffices to give a base drawing $D_{m}$ for some
suitable $m<n$ (equivalently, the underlying point set $P_m$), 
the cluster models $S_{i}$, $i=1,\ldots,m$, and a pre--halving set of
lines $\{\beta_i\}_{i\in I}$ for those points in $P_{m}$ that get
transformed into a cluster.  In this case, we work with
a base set with $51$ points, that is, $m=51$.

These ingredients are given below. The result is a drawing of
$K_{69}$ with $319731$ crossings.

\subsubsection{The base point configuration}

We use as base configuration the $51$--point set
$P=\{p_1,p_2,\ldots,p_{51}\}$ from Section~\ref{t315}.  

\subsubsection{The cluster models}

The cluster models for those points that do not get augmented or get
augmented into a cluster of size $2$ or $3$ are trivial (any point
sets in general position work). Since all clusters in this case are of
size $1$, $2$, or $3$, the description is greatly simplified in this
case:

For $i=  1 , 5 , 6 , 7 , 10 , 11 , 12 , 13 , 15 , 16 , 17 , 18 , 19 , 20 , 22 , 23 , 24 , 26 , 27 , 28 , 29 , 30 , 31 , 32 , 33 , 38 , 39$, 
$40 , 41 , 42 , 43 , 50 , 51$ we let $S_i$ have one point (there is no need
to specify its coordinates, as we mentioned above).

For $i = 2, 3, 4, 8, 9, 14, 21, 25, 34, 35, 36, 37, 44, 45, 46, 47, 48$, and $49$, we let $S_i$ have two points  (there is no need
to specify its coordinates, as we mentioned above).

Thus, the set $I$ of those subscripts $i$ such that $s_i:=|S_i|>1$ is 
$I=\{ 2, 3, 4, 8, 9, 14, 21, 25, 34, 35, 36, 37, 44, 45, 46, 47, 48, 49\}$. 

\subsubsection{A pre--halving set of lines}

We finally define a pre--halving set of lines 
$\{\beta_i\}_{i\in I}$.

\begin{enumerate}

\item Let $\beta_{2}$ be the line that goes through $p_{2}$ with slope $0.005$.
Thus $\beta_2$ is simple.
\item Let $\beta_{3}$ be the line that goes through $p_{3}$ with slope $-0.0006$.
Thus $\beta_3$ is simple.
\item Let $\beta_{4}$ be the line that goes through $p_{4}$ with slope $-0.017$.
Thus $\beta_4$ is simple.
\item Let $\beta_{8}$ be the line that goes through $p_{8}$ with slope $ -0.17$.
Thus $\beta_8$ is simple.
\item Let $\beta_{9}$ be the line that goes through $p_{9}$ with slope $ -0.25 $.
Thus $\beta_9$ is simple.
\item Let $\beta_{14}$ be the line that goes through $p_{14}$ with slope $ -1.21 $.
Thus $\beta_{14}$ is simple.
\item Let $\beta_{21}$ be the line $\theta(\beta_{14}) $.
Thus $\beta_{21}$ is simple and goes through $p_{21}$.  
\item Let $\beta_{	25	}$ be the line $\theta^2(\beta_{	14	}) $.
Thus $\beta_{	25	}$ is simple and goes through $p_{	25	} $.
\item Let $\beta_{	34	}$ be the line $\theta^2(\beta_{	9	}) $.
Thus $\beta_{	34	}$ is simple and goes through $p_{	34	} $.
\item Let $\beta_{	35	}$ be the line $\theta(\beta_{	9	}) $.
Thus $\beta_{	35	}$ is simple and goes through $p_{	35	} $.
\item Let $\beta_{	36	}$ be the line $\theta(\beta_{	8	}) $.
Thus $\beta_{	36	}$ is simple and goes through $p_{	36	} $.
\item Let $\beta_{	37	}$ be the line $\theta^2(\beta_{	8	}) $.
Thus $\beta_{	37	}$ is simple and goes through $p_{	37	} $.
\item Let $\beta_{44}$ be the line $\theta(\beta_{4}) $.
Thus $\beta_{44}$ is simple and goes through $p_{44}$.  
\item Let $\beta_{45}$ be the line $\theta^2(\beta_{4}) $.
Thus $\beta_{45}$ is simple and goes through $p_{45}$.  
\item Let $\beta_{46}$ be the line $\theta^2(\beta_{3}) $.
Thus $\beta_{46}$ is simple and goes through $p_{46}$.  
\item Let $\beta_{47}$ be the line $\theta(\beta_{3}) $.
Thus $\beta_{47}$ is simple and goes through $p_{47}$.  
\item Let $\beta_{	48	}$ be the line $\theta^2(\beta_{	2	}) $.
Thus $\beta_{48}$ is simple and goes through $p_{48}$.  
\item Let $\beta_{	49	}$ be the line $\theta(\beta_{	2	}) $.
Thus $\beta_{49}$ is simple and goes through $p_{49}$.  

\end{enumerate}

\clearpage

\subsection{(How to construct) A drawing 
of $K_{72}$ with $380964$ crossings}

We describe how to obtain a drawing of $K_{72}$ with $380964$
crossings using the construction technique in Section~\ref{gencon}.
As explained at the end of Section~\ref{symdra},  
it suffices to give a base drawing $D_{m}$ for some
suitable $m<n$ (equivalently, the underlying point set $P_m$), 
the cluster models $S_{i}$, $i=1,\ldots,m$, and a pre--halving set of
lines $\{\beta_i\}_{i\in I}$ for those points in $P_{m}$ that get
transformed into a cluster.  In this case, we work with
a base set with $51$ points, that is, $m=51$.

These ingredients are given below. The result is a drawing of
$K_{72}$ with $380964$ crossings.

\subsubsection{The base point configuration}

We use as base configuration the $51$--point set
$P=\{p_1,p_2,\ldots,p_{51}\}$ from Section~\ref{t315}.  

\subsubsection{The cluster models}

The cluster models for those points that do not get augmented or get
augmented into a cluster of size $2$ or $3$ are trivial (any point
sets in general position work). Since all clusters in this case are of
size $1$, $2$, or $3$, the description is greatly simplified in this
case:

For $i = 1 , 6 , 7 , 8 , 10 , 11 , 12 , 15 , 16 , 17 , 18 , 19 , 20 , 23 , 24 , 26 , 28 , 29 , 30 , 31 , 32 , 33 , 36 , 37 , 38 , 39 , 40$, $41 , 50$, and $51$ we let $S_i$ have one point (there is no need
to specify its coordinates, as we mentioned above).

For $i = 2 ,3, 4, 5, 9, 13, 14, 21, 22, 25, 27, 34, 35, 42, 43, 44, 45, 46, 47, 48$, and $49$ we let $S_i$ have two points  (there is no need
to specify its coordinates, as we mentioned above).

Thus, the set $I$ of those subscripts $i$ such that $s_i:=|S_i|>1$ is 
$I=\{ 2 ,3, 4, 5, 9, 13, 14, 21, 22, 25, 27, 34, 35, 42, 43, 44, 45, 46, 47, 48, 49\}$. 

\subsubsection{A pre--halving set of lines}

We finally define a pre--halving set of lines 
$\{\beta_i\}_{i\in I}$.

\begin{enumerate}

\item Let $\beta_{2}$ be the line that goes through $p_{2}$ with slope $0.005$.
Thus $\beta_2$ is simple.
\item Let $\beta_{3}$ be the line that goes through $p_{3}$ with slope $0.001$.
Thus $\beta_3$ is simple.
\item Let $\beta_{4}$ be the line that goes through $p_{4}$ with slope $0.04$.
Thus $\beta_4$ is simple.
\item Let $\beta_{5}$ be the line that goes through $p_{5}$ with slope $-0.027$.
Thus $\beta_5$ is simple.
\item Let $\beta_{9}$ be the line that goes through $p_{9}$ with slope $ -0.1763 $.
Thus $\beta_9$ is simple.
\item Let $\beta_{13}$ be the line that goes through $p_{13}$ with slope $  0.052 $.
Thus $\beta_{13}$ is simple.
\item Let $\beta_{14}$ be the line that goes through $p_{14}$ with slope $ -1.1994 $.
Thus $\beta_{14}$ is simple.
\item Let $\beta_{21}$ be the line $\theta(\beta_{14}) $.
Thus $\beta_{21}$ is simple and goes through $p_{21}$.  
\item Let $\beta_{22}$ be the line $\theta(\beta_{13}) $.
Thus $\beta_{22}$ is simple and goes through $p_{22}$.
\item Let $\beta_{	25	}$ be the line $\theta^2(\beta_{	14	}) $.
Thus $\beta_{	25	}$ is simple and goes through $p_{	25	} $.
\item Let $\beta_{	27	}$ be the line $\theta^2(\beta_{ 13	}) $.
Thus $\beta_{	27	}$ is simple and goes through $p_{	27	} $.
\item Let $\beta_{	34	}$ be the line $\theta^2(\beta_{	9	}) $.
Thus $\beta_{	34	}$ is simple and goes through $p_{	34	} $.
\item Let $\beta_{	35	}$ be the line $\theta(\beta_{	9	}) $.
Thus $\beta_{	35	}$ is simple and goes through $p_{	35	} $.
\item Let $\beta_{	42	}$ be the line $\theta^2(\beta_{	5	}) $.
Thus $\beta_{	42	}$ is simple and goes through $p_{	42	} $.
\item Let $\beta_{	43	}$ be the line $\theta(\beta_{	5	}) $.
Thus $\beta_{	43	}$ is simple and goes through $p_{	43	} $.
\item Let $\beta_{44}$ be the line $\theta(\beta_{4}) $.
Thus $\beta_{44}$ is simple and goes through $p_{44}$.  
\item Let $\beta_{45}$ be the line $\theta^2(\beta_{4}) $.
Thus $\beta_{45}$ is simple and goes through $p_{45}$.  
\item Let $\beta_{46}$ be the line $\theta^2(\beta_{3}) $.
Thus $\beta_{46}$ is simple and goes through $p_{46}$.  
\item Let $\beta_{47}$ be the line $\theta(\beta_{3}) $.
Thus $\beta_{47}$ is simple and goes through $p_{47}$.  
\item Let $\beta_{	48	}$ be the line $\theta^2(\beta_{	2	}) $.
Thus $\beta_{48}$ is simple and goes through $p_{48}$.  
\item Let $\beta_{	49	}$ be the line $\theta(\beta_{	2	}) $.
Thus $\beta_{49}$ is simple and goes through $p_{49}$.  

\end{enumerate}

\clearpage

\subsection{(How to construct) A drawing 
of $K_{75}$ with $450540$ crossings}

We describe how to obtain a drawing of $K_{75}$ with $450540$
crossings using the construction technique in Section~\ref{gencon}.
As explained at the end of Section~\ref{symdra},  
it suffices to give a base drawing $D_{m}$ for some
suitable $m<n$ (equivalently, the underlying point set $P_m$), 
the cluster models $S_{i}$, $i=1,\ldots,m$, and a pre--halving set of
lines $\{\beta_i\}_{i\in I}$ for those points in $P_{m}$ that get
transformed into a cluster.  In this case, we work with
a base set with $51$ points, that is, $m=51$.

These ingredients are given below. The result is a drawing of
$K_{75}$ with $450540$ crossings.

\subsubsection{The base point configuration}

We use as base configuration the $51$--point set
$P=\{p_1,p_2,\ldots,p_{51}\}$ from Section~\ref{t315}.  

\subsubsection{The cluster models}

The cluster models for those points that do not get augmented or get
augmented into a cluster of size $2$ or $3$ are trivial (any point
sets in general position work). Since all clusters in this case are of
size $1$, $2$, or $3$, the description is greatly simplified in this
case:

For $i=3 , 6 , 7 , 10 , 11 , 12 , 15 , 16 , 17 , 18 , 19 , 20 , 23 , 24 , 26 , 28 , 29 , 30 , 31 , 32 , 33 , 38 , 39 , 40 , 41 , 46$, and $47$ we let $S_i$ have one point (there is no need
to specify its coordinates, as we mentioned above).

For $i = 1, 2, 4, 5, 8, 9, 13, 14, 21, 22, 25, 27, 34, 35, 36, 37, 42, 43, 44, 45, 48, 49, 50$, and $51$ we let $S_i$ have two points  (there is no need
to specify its coordinates, as we mentioned above).

Thus, the set $I$ of those subscripts $i$ such that $s_i:=|S_i|>1$ is 
$I=\{ 1, 2, 4, 5, 8, 9, 13, 14, 21, 22, 25, 27, 34, 35, 36, 37, 42, 43, 44, 45, 48, 49, 50, 51 \}$. 

\subsubsection{A pre--halving set of lines}

We finally define a pre--halving set of lines 
$\{\beta_i\}_{i\in I}$.

\begin{enumerate}

\item Let $\beta_{1}$ be the line that goes through $p_{1}$ with slope $0.0063$.
Thus $\beta_1$ is simple.
\item Let $\beta_{2}$ be the line that goes through $p_{2}$ with slope $0.005$.
Thus $\beta_2$ is simple.
\item Let $\beta_{4}$ be the line that goes through $p_{4}$ with slope $-0.011$.
Thus $\beta_4$ is simple.
\item Let $\beta_{5}$ be the line that goes through $p_{5}$ with slope $-0.027$.
Thus $\beta_5$ is simple.
\item Let $\beta_{8}$ be the line that goes through $p_{8}$ with slope $ -0.17 $.
Thus $\beta_8$ is simple.
\item Let $\beta_{9}$ be the line that goes through $p_{9}$ with slope $ -0.25 $.
Thus $\beta_9$ is simple.
\item Let $\beta_{13}$ be the line that goes through $p_{13}$ with slope $ -1.08 $.
Thus $\beta_{13}$ is simple.
\item Let $\beta_{14}$ be the line that goes through $p_{14}$ with slope $ -1.2 $.
Thus $\beta_{14}$ is simple.
\item Let $\beta_{21}$ be the line $\theta(\beta_{14}) $.
Thus $\beta_{21}$ is simple and goes through $p_{21}$.  
\item Let $\beta_{22}$ be the line $\theta(\beta_{13}) $.
Thus $\beta_{22}$ is simple and goes through $p_{22}$.
\item Let $\beta_{	25	}$ be the line $\theta^2(\beta_{	14	}) $.
Thus $\beta_{	25	}$ is simple and goes through $p_{	25	} $.
\item Let $\beta_{	27	}$ be the line $\theta^2(\beta_{ 13	}) $.
Thus $\beta_{	27	}$ is simple and goes through $p_{	27	} $.
\item Let $\beta_{	34	}$ be the line $\theta^2(\beta_{	9	}) $.
Thus $\beta_{	34	}$ is simple and goes through $p_{	34	} $.
\item Let $\beta_{	35	}$ be the line $\theta(\beta_{	9	}) $.
Thus $\beta_{	35	}$ is simple and goes through $p_{	35	} $.
\item Let $\beta_{	36	}$ be the line $\theta(\beta_{	8	}) $.
Thus $\beta_{	36	}$ is simple and goes through $p_{	36	} $.
\item Let $\beta_{	37	}$ be the line $\theta^2(\beta_{	8	}) $.
Thus $\beta_{	37	}$ is simple and goes through $p_{	37	} $.
\item Let $\beta_{	42	}$ be the line $\theta^2(\beta_{	5	}) $.
Thus $\beta_{	42	}$ is simple and goes through $p_{	42	} $.
\item Let $\beta_{	43	}$ be the line $\theta(\beta_{	5	}) $.
Thus $\beta_{	43	}$ is simple and goes through $p_{	43	} $.
\item Let $\beta_{44}$ be the line $\theta(\beta_{4}) $.
Thus $\beta_{44}$ is simple and goes through $p_{44}$.  
\item Let $\beta_{45}$ be the line $\theta^2(\beta_{4}) $.
Thus $\beta_{45}$ is simple and goes through $p_{45}$.  
\item Let $\beta_{	48	}$ be the line $\theta^2(\beta_{	2	}) $.
Thus $\beta_{48}$ is simple and goes through $p_{48}$.  
\item Let $\beta_{	49	}$ be the line $\theta(\beta_{	2	}) $.
Thus $\beta_{49}$ is simple and goes through $p_{49}$.  
\item Let $\beta_{	50	}$ be the line $\theta^2(\beta_{	1	}) $.
Thus $\beta_{50}$ is simple and goes through $p_{50}$.  
\item Let $\beta_{	51	}$ be the line $\theta(\beta_{	1	}) $.
Thus $\beta_{51}$ is simple and goes through $p_{51}$.  

\end{enumerate}

\clearpage

\subsection{(How to construct) A drawing 
of $K_{78}$ with $529332$ crossings}

We describe how to obtain a drawing of $K_{78}$ with $529332$
crossings using the construction technique in Section~\ref{gencon}.
As explained at the end of Section~\ref{symdra},  
it suffices to give a base drawing $D_{m}$ for some
suitable $m<n$ (equivalently, the underlying point set $P_m$), 
the cluster models $S_{i}$, $i=1,\ldots,m$, and a pre--halving set of
lines $\{\beta_i\}_{i\in I}$ for those points in $P_{m}$ that get
transformed into a cluster.  In this case, we work with
a base set with $51$ points, that is, $m=51$.

These ingredients are given below. The result is a drawing of
$K_{78}$ with $529332$ crossings.

\subsubsection{The base point configuration}

We use as base configuration the $51$--point set
$P=\{p_1,p_2,\ldots,p_{51}\}$ from Section~\ref{t315}.  

\subsubsection{The cluster models}

The cluster models for those points that do not get augmented or get
augmented into a cluster of size $2$ or $3$ are trivial (any point
sets in general position work). Since all clusters in this case are of
size $1$, $2$, or $3$, the description is greatly simplified in this
case:

For $i= 3 , 6 , 7 , 10 , 11 , 15 , 16 , 17 , 18 , 19 , 20 , 23 , 24 , 26 , 29 , 30 , 32 , 33 , 38 , 39 , 40 , 41 , 46 , 47 $ we let $S_i$ have one point (there is no need
to specify its coordinates, as we mentioned above).

For $i = 1, 2, 4, 5, 8, 9, 12, 13, 14, 21, 22, 25, 27, 28, 31, 34, 35, 36, 37, 42, 43, 44, 45, 48, 49, 50$, and $51$ we let $S_i$ have two points  (there is no need
to specify its coordinates, as we mentioned above).

Thus, the set $I$ of those subscripts $i$ such that $s_i:=|S_i|>1$ is 
$I=\{  1, 2, 4, 5, 8, 9, 12, 13, 14, 21, 22, 25, 27, 28, 31, 34, 35, 36, 37, 42, 43, 44, 45, 48, 49, 50, 51 \}$. 

\subsubsection{A pre--halving set of lines}

We finally define a pre--halving set of lines 
$\{\beta_i\}_{i\in I}$.

\begin{enumerate}

\item Let $\beta_1$ be the line that goes through $p_1$ and $p_2$,
  directed from $p_1$ towards $p_2$. 
Thus $\beta_1$ is splitting.
\item Let $\beta_{2}$ be the line that goes through $p_{2}$ with slope $ 0.01 $.
Thus $\beta_2$ is simple.
\item Let $\beta_{4}$ be the line that goes through $p_{4}$ with slope $ 0.02 $.
Thus $\beta_4$ is simple.
\item Let $\beta_{5}$ be the line that goes through $p_{5}$ with slope $ -0.027 $.
Thus $\beta_5$ is simple.
\item Let $\beta_{8}$ be the line that goes through $p_{8}$ with slope $ -0.17 $.
Thus $\beta_8$ is simple.
\item Let $\beta_{9}$ be the line that goes through $p_{9}$ with slope $ -0.1763 $.
Thus $\beta_9$ is simple.
\item Let $\beta_{12}$ be the line that goes through $p_{12}$ with slope $ -0.416 $.
Thus $\beta_{12}$ is simple.
\item Let $\beta_{13}$ be the line that goes through $p_{13}$ with slope $  0.052 $.
Thus $\beta_{13}$ is simple.
\item Let $\beta_{14}$ be the line that goes through $p_{14}$ with slope $  -1.1994 $.
Thus $\beta_{14}$ is simple.
\item Let $\beta_{21}$ be the line $\theta(\beta_{14}) $.
Thus $\beta_{21}$ is simple and goes through $p_{21}$.  
\item Let $\beta_{22}$ be the line $\theta(\beta_{13}) $.
Thus $\beta_{22}$ is simple and goes through $p_{22}$.
\item Let $\beta_{	25	}$ be the line $\theta^2(\beta_{	14	}) $.
Thus $\beta_{	25	}$ is simple and goes through $p_{	25	} $.
\item Let $\beta_{	27	}$ be the line $\theta^2(\beta_{ 13	}) $.
Thus $\beta_{	27	}$ is simple and goes through $p_{	27	} $.
\item Let $\beta_{	28	}$ be the line $\theta^2(\beta_{	12	}) $.
Thus $\beta_{	28	}$ is simple and goes through $p_{	28	} $.
\item Let $\beta_{	31	}$ be the line $\theta(\beta_{	12	}) $.
Thus $\beta_{	31	}$ is simple and goes through $p_{	31	} $.
\item Let $\beta_{	34	}$ be the line $\theta^2(\beta_{	9	}) $.
Thus $\beta_{	34	}$ is simple and goes through $p_{	34	} $.
\item Let $\beta_{	35	}$ be the line $\theta(\beta_{	9	}) $.
Thus $\beta_{	35	}$ is simple and goes through $p_{	35	} $.
\item Let $\beta_{	36	}$ be the line $\theta(\beta_{	8	}) $.
Thus $\beta_{	36	}$ is simple and goes through $p_{	36	} $.
\item Let $\beta_{	37	}$ be the line $\theta^2(\beta_{	8	}) $.
Thus $\beta_{	37	}$ is simple and goes through $p_{	37	} $.
\item Let $\beta_{	42	}$ be the line $\theta^2(\beta_{	5	}) $.
Thus $\beta_{	42	}$ is simple and goes through $p_{	42	} $.
\item Let $\beta_{	43	}$ be the line $\theta(\beta_{	5	}) $.
Thus $\beta_{	43	}$ is simple and goes through $p_{	43	} $.
\item Let $\beta_{44}$ be the line $\theta(\beta_{4}) $.
Thus $\beta_{44}$ is simple and goes through $p_{44}$.  
\item Let $\beta_{45}$ be the line $\theta^2(\beta_{4}) $.
Thus $\beta_{45}$ is simple and goes through $p_{45}$.  
\item Let $\beta_{	48	}$ be the line $\theta^2(\beta_{	2	}) $.
Thus $\beta_{48}$ is simple and goes through $p_{48}$.  
\item Let $\beta_{	49	}$ be the line $\theta(\beta_{	2	}) $.
Thus $\beta_{49}$ is simple and goes through $p_{49}$.  
\item Let $\beta_{	50	}$ be the line $\theta^2(\beta_{	1	}) $.
Thus $\beta_{	50	}$ is splitting and goes through $p_{	50	} $ and $p_{48} $, directed from $p_{50}$ towards $p_{48}$.
\item Let $\beta_{	51	}$ be the line $\theta(\beta_{	1	}) $.
Thus $\beta_{	51	}$ is splitting and goes through $p_{	51	}$ and $p_{	49} $, directed from $p_{51}$ towards $p_{49}$. 

\end{enumerate}

\clearpage

\subsection{(How to construct) A drawing 
of $K_{81}$ with $618018$ crossings}

We describe how to obtain a drawing of $K_{81}$ with $618018$
crossings using the construction technique in Section~\ref{gencon}.
As explained at the end of Section~\ref{symdra},  
it suffices to give a base drawing $D_{m}$ for some
suitable $m<n$ (equivalently, the underlying point set $P_m$), 
the cluster models $S_{i}$, $i=1,\ldots,m$, and a pre--halving set of
lines $\{\beta_i\}_{i\in I}$ for those points in $P_{m}$ that get
transformed into a cluster.  In this case, we work with
a base set with $51$ points, that is, $m=51$.

These ingredients are given below. The result is a drawing of
$K_{81}$ with $618018$ crossings.

\subsubsection{The base point configuration}

We use as base configuration the $51$--point set
$P=\{p_1,p_2,\ldots,p_{51}\}$ from Section~\ref{t315}.  

\subsubsection{The cluster models}

The cluster models for those points that do not get augmented or get
augmented into a cluster of size $2$ or $3$ are trivial (any point
sets in general position work). Since all clusters in this case are of
size $1$, $2$, or $3$, the description is greatly simplified in this
case:

For $i=3 , 6 , 7 , 10 , 11 , 15 , 16 , 17 , 18 , 19 , 20 , 23 , 24 , 26 , 29 , 30 , 32 , 33 , 38 , 39 , 40 , 41 , 46$, and $47$ we let $S_i$ have one point (there is no need
to specify its coordinates, as we mentioned above).

For $i = 1, 2, 5, 8, 9, 12, 13, 14, 21, 22, 25, 27, 28, 31, 34, 35, 36, 37, 42, 43, 48, 49, 50, 51$, we let $S_i$ have two points  (there is no need
to specify its coordinates, as we mentioned above).

For $i = 4, 44, 45$, we let $S_i$ have three points  (there is no need
to specify its coordinates, as we mentioned above).

Thus, the set $I$ of those subscripts $i$ such that $s_i:=|S_i|>1$ is 
$I=\{ 1, 2, 4, 5, 8, 9, 12, 13, 14, 21, 22, 25, 27, 28, 31, 34, 35, 36, 37, 42, 43, 44, 45, 48, 49, 50, 51 \}$. 

\subsubsection{A pre--halving set of lines}

We finally define a pre--halving set of lines 
$\{\beta_i\}_{i\in I}$.

\begin{enumerate}

\item Let $\beta_{1}$ be the line that goes through $p_{1}$ with slope $ 0.0063 $.
Thus $\beta_1$ is simple.
\item Let $\beta_{2}$ be the line that goes through $p_{2}$ with slope $ 0.005 $.
Thus $\beta_2$ is simple.
\item Let $\beta_{4}$ be the line that goes through $p_{4}$ with slope $ 0.02 $.
Thus $\beta_4$ is simple.
\item Let $\beta_{5}$ be the line that goes through $p_{5}$ with slope $ -0.0288 $.
Thus $\beta_5$ is simple.
\item Let $\beta_{8}$ be the line that goes through $p_{8}$ with slope $ -0.169 $.
Thus $\beta_8$ is simple.
\item Let $\beta_{9}$ be the line that goes through $p_{9}$ with slope $ -0.25 $.
Thus $\beta_9$ is simple.
\item Let $\beta_{12}$ be the line that goes through $p_{12}$ with slope $ -0.416 $.
Thus $\beta_{12}$ is simple.
\item Let $\beta_{13}$ be the line that goes through $p_{13}$ with slope $ -1.08 $.
Thus $\beta_{13}$ is simple.
\item Let $\beta_{14}$ be the line that goes through $p_{14}$ with slope $ -1.21 $.
Thus $\beta_{14}$ is simple.
\item Let $\beta_{21}$ be the line $\theta(\beta_{14}) $.
Thus $\beta_{21}$ is simple and goes through $p_{21}$.  
\item Let $\beta_{22}$ be the line $\theta(\beta_{13}) $.
Thus $\beta_{22}$ is simple and goes through $p_{22}$.
\item Let $\beta_{	25	}$ be the line $\theta^2(\beta_{	14	}) $.
Thus $\beta_{	25	}$ is simple and goes through $p_{	25	} $.
\item Let $\beta_{	27	}$ be the line $\theta^2(\beta_{ 13	}) $.
Thus $\beta_{	27	}$ is simple and goes through $p_{	27	} $.
\item Let $\beta_{	28	}$ be the line $\theta^2(\beta_{	12	}) $.
Thus $\beta_{	28	}$ is simple and goes through $p_{	28	} $.
\item Let $\beta_{	31	}$ be the line $\theta(\beta_{	12	}) $.
Thus $\beta_{	31	}$ is simple and goes through $p_{	31	} $.
\item Let $\beta_{	34	}$ be the line $\theta^2(\beta_{	9	}) $.
Thus $\beta_{	34	}$ is simple and goes through $p_{	34	} $.
\item Let $\beta_{	35	}$ be the line $\theta(\beta_{	9	}) $.
Thus $\beta_{	35	}$ is simple and goes through $p_{	35	} $.
\item Let $\beta_{	36	}$ be the line $\theta(\beta_{	8	}) $.
Thus $\beta_{	36	}$ is simple and goes through $p_{	36	} $.
\item Let $\beta_{	37	}$ be the line $\theta^2(\beta_{	8	}) $.
Thus $\beta_{	37	}$ is simple and goes through $p_{	37	} $.
\item Let $\beta_{	42	}$ be the line $\theta^2(\beta_{	5	}) $.
Thus $\beta_{	42	}$ is simple and goes through $p_{	42	} $.
\item Let $\beta_{	43	}$ be the line $\theta(\beta_{	5	}) $.
Thus $\beta_{	43	}$ is simple and goes through $p_{	43	} $.
\item Let $\beta_{44}$ be the line $\theta(\beta_{4}) $.
Thus $\beta_{44}$ is simple and goes through $p_{44}$.  
\item Let $\beta_{45}$ be the line $\theta^2(\beta_{4}) $.
Thus $\beta_{45}$ is simple and goes through $p_{45}$.  
\item Let $\beta_{	48	}$ be the line $\theta^2(\beta_{	2	}) $.
Thus $\beta_{48}$ is simple and goes through $p_{48}$.  
\item Let $\beta_{	49	}$ be the line $\theta(\beta_{	2	}) $.
Thus $\beta_{49}$ is simple and goes through $p_{49}$.  
\item Let $\beta_{	50	}$ be the line $\theta^2(\beta_{	1	}) $.
Thus $\beta_{50}$ is simple and goes through $p_{50}$.  
\item Let $\beta_{	51	}$ be the line $\theta(\beta_{	1	}) $.
Thus $\beta_{51}$ is simple and goes through $p_{51}$.  

\end{enumerate}

\clearpage

\subsection{(How to construct) A drawing 
of $K_{84}$ with $717360$ crossings}

We describe how to obtain a drawing of $K_{84}$ with $717360$
crossings using the construction technique in Section~\ref{gencon}.
As explained at the end of Section~\ref{symdra},  
it suffices to give a base drawing $D_{m}$ for some
suitable $m<n$ (equivalently, the underlying point set $P_m$), 
the cluster models $S_{i}$, $i=1,\ldots,m$, and a pre--halving set of
lines $\{\beta_i\}_{i\in I}$ for those points in $P_{m}$ that get
transformed into a cluster.  In this case, we work with
a base set with $51$ points, that is, $m=51$.

These ingredients are given below. The result is a drawing of
$K_{84}$ with $717360$ crossings.

\subsubsection{The base point configuration}

We use as base configuration the $51$--point set
$P=\{p_1,p_2,\ldots,p_{51}\}$ from Section~\ref{t315}.  

\subsubsection{The cluster models}

The cluster models for those points that do not get augmented or get
augmented into a cluster of size $2$ or $3$ are trivial (any point
sets in general position work). Since all clusters in this case are of
size $1$, $2$, or $3$, the description is greatly simplified in this
case:

For $i=3 , 6 , 10 , 11 , 12 , 15 , 17 , 18 , 20 , 23 , 26 , 28 , 29 , 30 , 31 , 32 , 33 , 40 , 41 , 46$, and $47 $ we let $S_i$ have one point (there is no need
to specify its coordinates, as we mentioned above).

For $i = 1, 4, 5, 7, 8, 9, 13, 14, 16, 19, 21, 22, 24, 25, 27, 34, 35, 36, 37, 38, 39, 42, 43, 44, 45, 50$, and $51$, 
we let $S_i$ have two points  (there is no need
to specify its coordinates, as we mentioned above).

For $i = 2, 48, 49$, we let $S_i$ have three points  (there is no need
to specify its coordinates, as we mentioned above).

Thus, the set $I$ of those subscripts $i$ such that $s_i:=|S_i|>1$ is 
$I=\{ 1, 2, 4, 5, 7, 8, 9, 13, 14, 16, 19, 21, 22, 24, 25, 27, 34, 35, 36, 37, 38, 39, 42, 43, 44, 45, 48, 49, 50, 51\}$. 

\subsubsection{A pre--halving set of lines}

We finally define a pre--halving set of lines 
$\{\beta_i\}_{i\in I}$.

\begin{enumerate}

\item Let $\beta_1$ be the line that goes through $p_1$ and $p_2$,
  directed from $p_1$ towards $p_2$. 
\item Let $\beta_{2}$ be the line that goes through $p_{2}$ with slope $ 0.005 $.
Thus $\beta_2$ is simple.
\item Let $\beta_{4}$ be the line that goes through $p_{4}$ with slope $ 0.04 $.
Thus $\beta_4$ is simple.
\item Let $\beta_{5}$ be the line that goes through $p_{5}$ with slope $ -0.027 $.
Thus $\beta_5$ is simple.
\item Let $\beta_{7}$ be the line that goes through $p_{7}$ with slope $ -0.09 $.
Thus $\beta_7$ is simple.
\item Let $\beta_{8}$ be the line that goes through $p_{8}$ with slope $ -0.17 $.
Thus $\beta_8$ is simple.
\item Let $\beta_{9}$ be the line that goes through $p_{9}$ with slope $ -0.1763 $.
Thus $\beta_9$ is simple.
\item Let $\beta_{13}$ be the line that goes through $p_{13}$ with slope $  0.052 $.
Thus $\beta_{13}$ is simple.
\item Let $\beta_{14}$ be the line that goes through $p_{14}$ with slope $ 0.065 $.
Thus $\beta_{14}$ is simple.
\item Let $\beta_{16}$ be the line that goes through $p_{16}$ with slope $ -1.265 $.
Thus $\beta_{16}$ is simple.
\item Let $\beta_{19}$ be the line $\theta^2(\beta_{16}) $.
Thus $\beta_{19}$ is simple and goes through $p_{19}$.  
\item Let $\beta_{21}$ be the line $\theta(\beta_{14}) $.
Thus $\beta_{21}$ is simple and goes through $p_{21}$.  
\item Let $\beta_{22}$ be the line $\theta(\beta_{13}) $.
Thus $\beta_{22}$ is simple and goes through $p_{22}$.
\item Let $\beta_{	24	}$ be the line $\theta(\beta_{		16}) $.
Thus $\beta_{	24	}$ is simple and goes through $p_{	24	} $.
\item Let $\beta_{	25	}$ be the line $\theta^2(\beta_{	14	}) $.
Thus $\beta_{	25	}$ is simple and goes through $p_{	25	} $.
\item Let $\beta_{	27	}$ be the line $\theta^2(\beta_{ 13	}) $.
Thus $\beta_{	27	}$ is simple and goes through $p_{	27	} $.
\item Let $\beta_{	34	}$ be the line $\theta^2(\beta_{	9	}) $.
Thus $\beta_{	34	}$ is simple and goes through $p_{	34	} $.
\item Let $\beta_{	35	}$ be the line $\theta(\beta_{	9	}) $.
Thus $\beta_{	35	}$ is simple and goes through $p_{	35	} $.
\item Let $\beta_{	36	}$ be the line $\theta(\beta_{	8	}) $.
Thus $\beta_{	36	}$ is simple and goes through $p_{	36	} $.
\item Let $\beta_{	37	}$ be the line $\theta^2(\beta_{	8	}) $.
Thus $\beta_{	37	}$ is simple and goes through $p_{	37	} $.
\item Let $\beta_{	38	}$ be the line $\theta(\beta_{	7	}) $.
Thus $\beta_{	38	}$ is simple and goes through $p_{	38	} $.
\item Let $\beta_{	39	}$ be the line $\theta^2(\beta_{	7	}) $.
Thus $\beta_{	39	}$ is simple and goes through $p_{	39	} $.
\item Let $\beta_{	42	}$ be the line $\theta^2(\beta_{	5	}) $.
Thus $\beta_{	42	}$ is simple and goes through $p_{	42	} $.
\item Let $\beta_{	43	}$ be the line $\theta(\beta_{	5	}) $.
Thus $\beta_{	43	}$ is simple and goes through $p_{	43	} $.
\item Let $\beta_{44}$ be the line $\theta(\beta_{4}) $.
Thus $\beta_{44}$ is simple and goes through $p_{44}$.  
\item Let $\beta_{45}$ be the line $\theta^2(\beta_{4}) $.
Thus $\beta_{45}$ is simple and goes through $p_{45}$.  
\item Let $\beta_{	48	}$ be the line $\theta^2(\beta_{	2	}) $.
Thus $\beta_{48}$ is simple and goes through $p_{48}$.  
\item Let $\beta_{	49	}$ be the line $\theta(\beta_{	2	}) $.
Thus $\beta_{49}$ is simple and goes through $p_{49}$.  
\item Let $\beta_{	50	}$ be the line $\theta^2(\beta_{	1	}) $.
Thus $\beta_{	50	}$ is splitting and goes through $p_{	50	} $ and $p_{48} $, directed from $p_{50}$ towards $p_{48}$.
\item Let $\beta_{	51	}$ be the line $\theta(\beta_{	1	}) $.
Thus $\beta_{	51	}$ is splitting and goes through $p_{	51	}$ and $p_{	49} $, directed from $p_{51}$ towards $p_{49}$. 

\end{enumerate}

\clearpage

\subsection{(How to construct) A drawing 
of $K_{87}$ with $828225$ crossings}

We describe how to obtain a drawing of $K_{87}$ with $828225$
crossings using the construction technique in Section~\ref{gencon}.
As explained at the end of Section~\ref{symdra},  
it suffices to give a base drawing $D_{m}$ for some
suitable $m<n$ (equivalently, the underlying point set $P_m$), 
the cluster models $S_{i}$, $i=1,\ldots,m$, and a pre--halving set of
lines $\{\beta_i\}_{i\in I}$ for those points in $P_{m}$ that get
transformed into a cluster.  In this case, we work with
a base set with $51$ points, that is, $m=51$.

These ingredients are given below. The result is a drawing of
$K_{87}$ with $828225$ crossings.

\subsubsection{The base point configuration}

We use as base configuration the $51$--point set
$P=\{p_1,p_2,\ldots,p_{51}\}$ from Section~\ref{t315}.  

\subsubsection{The cluster models}

The cluster models for those points that do not get augmented or get
augmented into a cluster of size $2$ or $3$ are trivial (any point
sets in general position work). Since all clusters in this case are of
size $1$, $2$, or $3$, the description is greatly simplified in this
case:

For $i=7 , 10 , 11 , 16 , 17 , 18 , 19 , 23 , 24 , 29 , 30 , 32 , 33 , 38$, and $39$ we let $S_i$ have one point (there is no need
to specify its coordinates, as we mentioned above).

For $i = 1, 2, 3, 4, 5, 6, 8, 9, 12, 13, 14, 15, 20, 21, 22, 25, 26, 27, 28, 31, 34, 35, 36, 37, 40, 41, 42, 43$, 
$44, 45, 46, 47, 48, 49, 50$, and $51$, we let $S_i$ have two points  (there is no need
to specify its coordinates, as we mentioned above).

Thus, the set $I$ of those subscripts $i$ such that $s_i:=|S_i|>1$ is 
$I=\{  1, 2, 3, 4, 5, 6, 8, 9, 12, 13, 14, 15, 20, 21, 22, 25, 26, 27, 28, 31, 34, 35, 36, 37, 40, 41, 42, 43, 44, 45, 46, 47$, $48, 49, 50, 51\}$. 

\subsubsection{A pre--halving set of lines}

We finally define a pre--halving set of lines 
$\{\beta_i\}_{i\in I}$.

\begin{enumerate}

\item Let $\beta_{1}$ be the line that goes through $p_{1}$ with slope $ 0.0063 $.
Thus $\beta_1$ is simple.
\item Let $\beta_{2}$ be the line that goes through $p_{2}$ with slope $ 0.005 $.
Thus $\beta_2$ is simple.
\item Let $\beta_{3}$ be the line that goes through $p_{3}$ with slope $ 0.001 $.
Thus $\beta_3$ is simple.
\item Let $\beta_{4}$ be the line that goes through $p_{4}$ with slope $ -0.011 $.
Thus $\beta_4$ is simple.
\item Let $\beta_{5}$ be the line that goes through $p_{5}$ with slope $ -0.026 $.
Thus $\beta_5$ is simple.
\item Let $\beta_{6}$ be the line that goes through $p_{6}$ with slope $ -0.0305 $.
Thus $\beta_6$ is simple.
\item Let $\beta_{8}$ be the line that goes through $p_{8}$ with slope $ -0.17 $.
Thus $\beta_8$ is simple.
\item Let $\beta_{9}$ be the line that goes through $p_{9}$ with slope $ -0.25 $.
Thus $\beta_9$ is simple.
\item Let $\beta_{12}$ be the line that goes through $p_{12}$ with slope $ -0.416 $.
Thus $\beta_{12}$ is simple.
\item Let $\beta_{13}$ be the line that goes through $p_{13}$ with slope $ -1.08 $.
Thus $\beta_{13}$ is simple.
\item Let $\beta_{14}$ be the line that goes through $p_{14}$ with slope $ -1.21 $.
Thus $\beta_{14}$ is simple.
\item Let $\beta_{15}$ be the line that goes through $p_{15}$ with slope $ -1.262 $.
Thus $\beta_{15}$ is simple.
\item Let $\beta_{20}$ be the line $\theta(\beta_{15}) $.
Thus $\beta_{20}$ is simple and goes through $p_{20}$.  
\item Let $\beta_{21}$ be the line $\theta(\beta_{14}) $.
Thus $\beta_{21}$ is simple and goes through $p_{21}$.  
\item Let $\beta_{22}$ be the line $\theta(\beta_{13}) $.
Thus $\beta_{22}$ is simple and goes through $p_{22}$.
\item Let $\beta_{	25	}$ be the line $\theta^2(\beta_{	14	}) $.
Thus $\beta_{	25	}$ is simple and goes through $p_{	25	} $.
\item Let $\beta_{	26	}$ be the line $\theta^2(\beta_{	15	}) $.
Thus $\beta_{	26	}$ is simple and goes through $p_{	26	} $.
\item Let $\beta_{	27	}$ be the line $\theta^2(\beta_{ 13	}) $.
Thus $\beta_{	27	}$ is simple and goes through $p_{	27	} $.
\item Let $\beta_{	28	}$ be the line $\theta^2(\beta_{	12	}) $.
Thus $\beta_{	28	}$ is simple and goes through $p_{	28	} $.
\item Let $\beta_{	31	}$ be the line $\theta(\beta_{	12	}) $.
Thus $\beta_{	31	}$ is simple and goes through $p_{	31	} $.
\item Let $\beta_{	34	}$ be the line $\theta^2(\beta_{	9	}) $.
Thus $\beta_{	34	}$ is simple and goes through $p_{	34	} $.
\item Let $\beta_{	35	}$ be the line $\theta(\beta_{	9	}) $.
Thus $\beta_{	35	}$ is simple and goes through $p_{	35	} $.
\item Let $\beta_{	36	}$ be the line $\theta(\beta_{	8	}) $.
Thus $\beta_{	36	}$ is simple and goes through $p_{	36	} $.
\item Let $\beta_{	37	}$ be the line $\theta^2(\beta_{	8	}) $.
Thus $\beta_{	37	}$ is simple and goes through $p_{	37	} $.
\item Let $\beta_{	40	}$ be the line $\theta^2(\beta_{	6	}) $.
Thus $\beta_{	40	}$ is simple and goes through $p_{	40	} $.
\item Let $\beta_{	41	}$ be the line $\theta(\beta_{	6	}) $.
Thus $\beta_{	41	}$ is simple and goes through $p_{	41	} $.
\item Let $\beta_{	42	}$ be the line $\theta^2(\beta_{	5	}) $.
Thus $\beta_{	42	}$ is simple and goes through $p_{	42	} $.
\item Let $\beta_{	43	}$ be the line $\theta(\beta_{	5	}) $.
Thus $\beta_{	43	}$ is simple and goes through $p_{	43	} $.
\item Let $\beta_{44}$ be the line $\theta(\beta_{4}) $.
Thus $\beta_{44}$ is simple and goes through $p_{44}$.  
\item Let $\beta_{45}$ be the line $\theta^2(\beta_{4}) $.
Thus $\beta_{45}$ is simple and goes through $p_{45}$.  
\item Let $\beta_{46}$ be the line $\theta^2(\beta_{3}) $.
Thus $\beta_{46}$ is simple and goes through $p_{46}$.  
\item Let $\beta_{47}$ be the line $\theta(\beta_{3}) $.
Thus $\beta_{47}$ is simple and goes through $p_{47}$.  
\item Let $\beta_{	48	}$ be the line $\theta^2(\beta_{	2	}) $.
Thus $\beta_{48}$ is simple and goes through $p_{48}$.  
\item Let $\beta_{	49	}$ be the line $\theta(\beta_{	2	}) $.
Thus $\beta_{49}$ is simple and goes through $p_{49}$.  
\item Let $\beta_{	50	}$ be the line $\theta^2(\beta_{	1	}) $.
Thus $\beta_{50}$ is simple and goes through $p_{50}$.  
\item Let $\beta_{	51	}$ be the line $\theta(\beta_{	1	}) $.
Thus $\beta_{51}$ is simple and goes through $p_{51}$.  

\end{enumerate}

\clearpage

\subsection{(How to construct) A drawing 
of $K_{90}$ with $951459$ crossings}

We describe how to obtain a drawing of $K_{90}$ with $951459$
crossings using the construction technique in Section~\ref{gencon}.
As explained at the end of Section~\ref{symdra},  
it suffices to give a base drawing $D_{m}$ for some
suitable $m<n$ (equivalently, the underlying point set $P_m$), 
the cluster models $S_{i}$, $i=1,\ldots,m$, and a pre--halving set of
lines $\{\beta_i\}_{i\in I}$ for those points in $P_{m}$ that get
transformed into a cluster.  In this case, we work with
a base set with $51$ points, that is, $m=51$.

These ingredients are given below. The result is a drawing of
$K_{90}$ with $951459$ crossings.

\subsubsection{The base point configuration}

We use as base configuration the $51$--point set
$P=\{p_1,p_2,\ldots,p_{51}\}$ from Section~\ref{t315}.  

\subsubsection{The cluster models}

The cluster models for those points that do not get augmented or get
augmented into a cluster of size $2$ or $3$ are trivial (any point
sets in general position work). Since all clusters in this case are of
size $1$, $2$, or $3$, the description is greatly simplified in this
case:

For $i=3 , 6 , 10 , 11 , 15 , 16 , 19 , 20 , 24 , 26 , 29 , 30 , 32 , 33 , 40 , 41 , 46$, and $47$ we let $S_i$ have one point (there is no need
to specify its coordinates, as we mentioned above).

For $i = 1 , 5 , 7 , 8 , 9 , 12 , 13 , 14 , 17 , 18 , 21 , 22 , 23 , 25 , 27 , 28 , 31 , 34 , 35 , 36 , 37 , 38 , 39 , 42 , 43 , 50$, 
and $51$, we let $S_i$ have two points  (there is no need
to specify its coordinates, as we mentioned above).

For $i = 2, 4, 44, 45, 48, 49$, we let $S_i$ have three points  (there is no need
to specify its coordinates, as we mentioned above).

Thus, the set $I$ of those subscripts $i$ such that $s_i:=|S_i|>1$ is 
$I=\{ 1 , 2, 4, 5 , 7 , 8 , 9 , 12 , 13 , 14 , 17 , 18 , 21 , 22 , 23 , 25 , 27 , 28 , 31 , 34 , 35 , 36 , 37 , 38 , 39 , 42 , 43 , 44, 45, 48, 49$, $50 , 51 \}$. 

\subsubsection{A pre--halving set of lines}

We finally define a pre--halving set of lines 
$\{\beta_i\}_{i\in I}$.

\begin{enumerate}

\item Let $\beta_1$ be the line that goes through $p_1$ and $p_2$,
  directed from $p_1$ towards $p_2$. 
Thus $\beta_1$ is splitting.
\item Let $\beta_{2}$ be the line that goes through $p_{2}$ with slope $ 0.005 $.
Thus $\beta_2$ is simple.
\item Let $\beta_{4}$ be the line that goes through $p_{4}$ with slope $ -0.017 $.
Thus $\beta_4$ is simple.
\item Let $\beta_{5}$ be the line that goes through $p_{5}$ with slope $ -0.0288 $.
Thus $\beta_5$ is simple.
\item Let $\beta_{7}$ be the line that goes through $p_{7}$ with slope $ 0.055 $.
Thus $\beta_7$ is simple.
\item Let $\beta_{8}$ be the line that goes through $p_{8}$ with slope $ -0.17 $.
Thus $\beta_8$ is simple.
\item Let $\beta_{9}$ be the line that goes through $p_{9}$ with slope $ -0.1763 $.
Thus $\beta_9$ is simple.
\item Let $\beta_{12}$ be the line that goes through $p_{12}$ with slope $ -0.416 $.
Thus $\beta_{12}$ is simple.
\item Let $\beta_{13}$ be the line that goes through $p_{13}$ with slope $ 0.052 $.
Thus $\beta_{13}$ is simple.
\item Let $\beta_{14}$ be the line that goes through $p_{14}$ with slope $ -1.1994 $.
Thus $\beta_{14}$ is simple.
\item Let $\beta_{17}$ be the line that goes through $p_{17}$ with slope $ -1.35 $.
Thus $\beta_{17}$ is simple.
\item Let $\beta_{18}$ be the line $\theta^2(\beta_{17}) $.
Thus $\beta_{18}$ is simple and goes through $p_{18}$.  
\item Let $\beta_{21}$ be the line $\theta(\beta_{14}) $.
Thus $\beta_{21}$ is simple and goes through $p_{21}$.  
\item Let $\beta_{22}$ be the line $\theta(\beta_{13}) $.
Thus $\beta_{22}$ is simple and goes through $p_{22}$.
\item Let $\beta_{23}$ be the line $\theta(\beta_{17}) $.
Thus $\beta_{23}$ is simple and goes through $p_{23}$.
\item Let $\beta_{	25	}$ be the line $\theta^2(\beta_{	14	}) $.
Thus $\beta_{	25	}$ is simple and goes through $p_{	25	} $.
\item Let $\beta_{	27	}$ be the line $\theta^2(\beta_{ 13	}) $.
Thus $\beta_{	27	}$ is simple and goes through $p_{	27	} $.
\item Let $\beta_{	28	}$ be the line $\theta^2(\beta_{	12	}) $.
Thus $\beta_{	28	}$ is simple and goes through $p_{	28	} $.
\item Let $\beta_{	31	}$ be the line $\theta(\beta_{	12	}) $.
Thus $\beta_{	31	}$ is simple and goes through $p_{	31	} $.
\item Let $\beta_{	34	}$ be the line $\theta^2(\beta_{	9	}) $.
Thus $\beta_{	34	}$ is simple and goes through $p_{	34	} $.
\item Let $\beta_{	35	}$ be the line $\theta(\beta_{	9	}) $.
Thus $\beta_{	35	}$ is simple and goes through $p_{	35	} $.
\item Let $\beta_{	36	}$ be the line $\theta(\beta_{	8	}) $.
Thus $\beta_{	36	}$ is simple and goes through $p_{	36	} $.
\item Let $\beta_{	37	}$ be the line $\theta^2(\beta_{	8	}) $.
Thus $\beta_{	37	}$ is simple and goes through $p_{	37	} $.
\item Let $\beta_{	38	}$ be the line $\theta(\beta_{	7	}) $.
Thus $\beta_{	38	}$ is simple and goes through $p_{	38	} $.
\item Let $\beta_{	39	}$ be the line $\theta^2(\beta_{	7	}) $.
Thus $\beta_{	39	}$ is simple and goes through $p_{	39	} $.
\item Let $\beta_{	42	}$ be the line $\theta^2(\beta_{	5	}) $.
Thus $\beta_{	42	}$ is simple and goes through $p_{	42	} $.
\item Let $\beta_{	43	}$ be the line $\theta(\beta_{	5	}) $.
Thus $\beta_{	43	}$ is simple and goes through $p_{	43	} $.
\item Let $\beta_{44}$ be the line $\theta(\beta_{4}) $.
Thus $\beta_{44}$ is simple and goes through $p_{44}$.  
\item Let $\beta_{45}$ be the line $\theta^2(\beta_{4}) $.
Thus $\beta_{45}$ is simple and goes through $p_{45}$.  
\item Let $\beta_{	48	}$ be the line $\theta^2(\beta_{	2	}) $.
Thus $\beta_{48}$ is simple and goes through $p_{48}$.  
\item Let $\beta_{	49	}$ be the line $\theta(\beta_{	2	}) $.
Thus $\beta_{49}$ is simple and goes through $p_{49}$.  
\item Let $\beta_{	50	}$ be the line $\theta^2(\beta_{	1	}) $.
Thus $\beta_{	50	}$ is splitting and goes through $p_{	50	} $ and $p_{48} $, directed from $p_{50}$ towards $p_{48}$.
\item Let $\beta_{	51	}$ be the line $\theta(\beta_{	1	}) $.
Thus $\beta_{	51	}$ is splitting and goes through $p_{	51	}$ and $p_{	49} $, directed from $p_{51}$ towards $p_{49}$. 

\end{enumerate}

\clearpage

\subsection{(How to construct) A drawing 
of $K_{93}$ with $1088055 $ crossings}

We describe how to obtain a drawing of $K_{93}$ with $1088055$
crossings using the construction technique in Section~\ref{gencon}.
As explained at the end of Section~\ref{symdra},  
it suffices to give a base drawing $D_{m}$ for some
suitable $m<n$ (equivalently, the underlying point set $P_m$), 
the cluster models $S_{i}$, $i=1,\ldots,m$, and a pre--halving set of
lines $\{\beta_i\}_{i\in I}$ for those points in $P_{m}$ that get
transformed into a cluster.  In this case, we work with
a base set with $51$ points, that is, $m=51$.

These ingredients are given below. The result is a drawing of
$K_{93}$ with $1088055$ crossings.

\subsubsection{The base point configuration}

We use as base configuration the $51$--point set
$P=\{p_1,p_2,\ldots,p_{51}\}$ from Section~\ref{t315}.  

\subsubsection{The cluster models}

The cluster models for those points that do not get augmented or get
augmented into a cluster of size $2$ or $3$ are trivial (any point
sets in general position work). Since all clusters in this case are of
size $1$, $2$, or $3$, the description is greatly simplified in this
case:

For $i=3 , 7 , 10 , 11 , 12 , 16 , 19 , 24 , 28 , 29 , 30 , 31 , 32 , 33 , 38 , 39 , 46$, and $47$ we let $S_i$ have one point (there is no need
to specify its coordinates, as we mentioned above).

For $i = 2 , 5 , 6 , 8 , 13 , 14 , 15 , 17 , 18 , 20 , 21 , 22 , 23 , 25 , 26 , 27 , 36 , 37 , 40 , 41 , 42 , 43 , 48$ 
and $49$, we let $S_i$ have two points  (there is no need
to specify its coordinates, as we mentioned above).

For $i = 1, 4, 9, 34, 35, 44, 45, 50$ and $51$, we let $S_i$ have three points  (there is no need
to specify its coordinates, as we mentioned above).

Thus, the set $I$ of those subscripts $i$ such that $s_i:=|S_i|>1$ is 
$I=\{1, 2 ,4, 5 , 6 , 8 , 9, 13 , 14 , 15 , 17 , 18 , 20 , 21 , 22 , 23 , 25 , 26 , 27 , 34, 35, 36 , 37 , 40 , 41 , 42 , 43 , 44, 45, 48, 49$, 
$50, 51\}$. 

\subsubsection{A pre--halving set of lines}

We finally define a pre--halving set of lines 
$\{\beta_i\}_{i\in I}$.

\begin{enumerate}

\item Let $\beta_1$ be the line that goes through $p_1$ and $p_2$,
  directed from $p_1$ towards $p_2$. 
\item Let $\beta_{2}$ be the line that goes through $p_{2}$ with slope $ 0.0033 $.
Thus $\beta_2$ is simple.
\item Let $\beta_{4}$ be the line that goes through $p_{4}$ with slope $ -0.011 $.
Thus $\beta_4$ is simple.
\item Let $\beta_{5}$ be the line that goes through $p_{5}$ with slope $ -0.026 $.
Thus $\beta_5$ is simple.
\item Let $\beta_{6}$ be the line that goes through $p_{6}$ with slope $ -0.033 $.
Thus $\beta_6$ is simple.
\item Let $\beta_{8}$ be the line that goes through $p_{8}$ with slope $ -0.17 $.
Thus $\beta_8$ is simple.
\item Let $\beta_{9}$ be the line that goes through $p_{9}$ with slope $ -0.1763 $.
Thus $\beta_9$ is simple.
\item Let $\beta_{13}$ be the line that goes through $p_{13}$ with slope $  -1.08 $.
Thus $\beta_{13}$ is simple.
\item Let $\beta_{14}$ be the line that goes through $p_{14}$ with slope $ -1.1994 $.
Thus $\beta_{14}$ is simple.
\item Let $\beta_{15}$ be the line that goes through $p_{15}$ with slope $ -1.2591 $.
Thus $\beta_{15}$ is simple.
\item Let $\beta_{17}$ be the line that goes through $p_{17}$ with slope $-1.35 $.
Thus $\beta_{17}$ is simple.
\item Let $\beta_{18}$ be the line $\theta^2(\beta_{17}) $.
Thus $\beta_{18}$ is simple and goes through $p_{18}$.  
\item Let $\beta_{20}$ be the line $\theta(\beta_{15}) $.
Thus $\beta_{20}$ is simple and goes through $p_{20}$.  
\item Let $\beta_{21}$ be the line $\theta(\beta_{14}) $.
Thus $\beta_{21}$ is simple and goes through $p_{21}$.  
\item Let $\beta_{22}$ be the line $\theta(\beta_{13}) $.
Thus $\beta_{22}$ is simple and goes through $p_{22}$.
\item Let $\beta_{23}$ be the line $\theta(\beta_{17}) $.
Thus $\beta_{23}$ is simple and goes through $p_{23}$.
\item Let $\beta_{	25	}$ be the line $\theta^2(\beta_{	14	}) $.
Thus $\beta_{	25	}$ is simple and goes through $p_{	25	} $.
\item Let $\beta_{	26	}$ be the line $\theta^2(\beta_{	15	}) $.
Thus $\beta_{	26	}$ is simple and goes through $p_{	26	} $.
\item Let $\beta_{	27	}$ be the line $\theta^2(\beta_{ 13	}) $.
Thus $\beta_{	27	}$ is simple and goes through $p_{	27	} $.
\item Let $\beta_{	34	}$ be the line $\theta^2(\beta_{	9	}) $.
Thus $\beta_{	34	}$ is simple and goes through $p_{	34	} $.
\item Let $\beta_{	35	}$ be the line $\theta(\beta_{	9	}) $.
Thus $\beta_{	35	}$ is simple and goes through $p_{	35	} $.
\item Let $\beta_{	36	}$ be the line $\theta(\beta_{	8	}) $.
Thus $\beta_{	36	}$ is simple and goes through $p_{	36	} $.
\item Let $\beta_{	37	}$ be the line $\theta^2(\beta_{	8	}) $.
Thus $\beta_{	37	}$ is simple and goes through $p_{	37	} $.
\item Let $\beta_{	40	}$ be the line $\theta^2(\beta_{	6	}) $.
Thus $\beta_{	40	}$ is simple and goes through $p_{	40	} $.
\item Let $\beta_{	41	}$ be the line $\theta(\beta_{	6	}) $.
Thus $\beta_{	41	}$ is simple and goes through $p_{	41	} $.
\item Let $\beta_{	42	}$ be the line $\theta^2(\beta_{	5	}) $.
Thus $\beta_{	42	}$ is simple and goes through $p_{	42	} $.
\item Let $\beta_{	43	}$ be the line $\theta(\beta_{	5	}) $.
Thus $\beta_{	43	}$ is simple and goes through $p_{	43	} $.
\item Let $\beta_{44}$ be the line $\theta(\beta_{4}) $.
Thus $\beta_{44}$ is simple and goes through $p_{44}$.  
\item Let $\beta_{45}$ be the line $\theta^2(\beta_{4}) $.
Thus $\beta_{45}$ is simple and goes through $p_{45}$.  
\item Let $\beta_{	48	}$ be the line $\theta^2(\beta_{	2	}) $.
Thus $\beta_{48}$ is simple and goes through $p_{48}$.  
\item Let $\beta_{	49	}$ be the line $\theta(\beta_{	2	}) $.
Thus $\beta_{49}$ is simple and goes through $p_{49}$.  
\item Let $\beta_{	50	}$ be the line $\theta^2(\beta_{	1	}) $.
Thus $\beta_{	50	}$ is splitting and goes through $p_{	50	} $ and $p_{48} $, directed from $p_{50}$ towards $p_{48}$.
\item Let $\beta_{	51	}$ be the line $\theta(\beta_{	1	}) $.
Thus $\beta_{	51	}$ is splitting and goes through $p_{	51	}$ and $p_{	49} $, directed from $p_{51}$ towards $p_{49}$. 

\end{enumerate}

\clearpage

\subsection{(How to construct) A drawing 
of $K_{96}$ with $1238646$ crossings}

We describe how to obtain a drawing of $K_{96}$ with $1238646$
crossings using the construction technique in Section~\ref{gencon}.
As explained at the end of Section~\ref{symdra},  
it suffices to give a base drawing $D_{m}$ for some
suitable $m<n$ (equivalently, the underlying point set $P_m$), 
the cluster models $S_{i}$, $i=1,\ldots,m$, and a pre--halving set of
lines $\{\beta_i\}_{i\in I}$ for those points in $P_{m}$ that get
transformed into a cluster.  In this case, we work with
a base set with $51$ points, that is, $m=51$.

These ingredients are given below. The result is a drawing of
$K_{96}$ with $1238646$ crossings.

\subsubsection{The base point configuration}

We use as base configuration the $51$--point set
$P=\{p_1,p_2,\ldots,p_{51}\}$ from Section~\ref{t315}.  

\subsubsection{The cluster models}

The cluster models for those points that do not get augmented or get
augmented into a cluster of size $2$ or $3$ are trivial (any point
sets in general position work). Since all clusters in this case are of
size $1$, $2$, or $3$, the description is greatly simplified in this
case:

For $i= 3 , 10 , 11 , 16 , 19 , 24 , 29 , 30 , 32 , 33 , 46 , 47$ we let $S_i$ have one point (there is no need
to specify its coordinates, as we mentioned above).

For $i = 2 , 5 , 6 , 7 , 8 , 9 , 12 , 13 , 14 , 15 , 17 , 18 , 20 , 21 , 22 , 23 , 25 , 26 , 27 , 28 , 31 , 34 , 35 , 36 , 37 , 38 , 39 , 40$, 
$41 , 42 , 43 , 48 , 49$, we let $S_i$ have two points  (there is no need
to specify its coordinates, as we mentioned above).

For $i = 1, 4, 44, 45, 50, 51 $, we let $S_i$ have three points  (there is no need
to specify its coordinates, as we mentioned above).

Thus, the set $I$ of those subscripts $i$ such that $s_i:=|S_i|>1$ is 
$I=\{1, 2 , 4, 5 , 6 , 7 , 8 , 9 , 12 , 13 , 14 , 15 , 17 , 18 , 20 , 21 , 22 , 23 , 25 , 26 , 27 , 28 , 31 , 34 , 35 , 36 , 37 , 38 , 39 , 40 , 41 , 42$, 
$43 , 44, 45, 48 , 49 , 50, 51 \}$. 

\subsubsection{A pre--halving set of lines}

We finally define a pre--halving set of lines 
$\{\beta_i\}_{i\in I}$.

\begin{enumerate}

\item Let $\beta_{1}$ be the line that goes through $p_{1}$ with slope $ 0.0063 $.
Thus $\beta_1$ is simple.
\item Let $\beta_2$ be the line that goes through $p_2$ and $p_4$,
  directed from $p_2$ towards $p_4$. 
\item Let $\beta_{4}$ be the line that goes through $p_{4}$ with slope $-0.02$.
Thus $\beta_4$ is simple.
\item Let $\beta_{5}$ be the line that goes through $p_{5}$ with slope $-0.0072$.
Thus $\beta_5$ is simple.
\item Let $\beta_{6}$ be the line that goes through $p_{6}$ with slope $-0.0072$.
Thus $\beta_6$ is simple.
\item Let $\beta_{7}$ be the line that goes through $p_{7}$ with slope $ 0.0656 $.
Thus $\beta_7$ is simple.
\item Let $\beta_{8}$ be the line that goes through $p_{8}$ with slope $ -0.17 $.
Thus $\beta_8$ is simple.
\item Let $\beta_{9}$ be the line that goes through $p_{9}$ with slope $ -0.1763 $.
Thus $\beta_9$ is simple.
\item Let $\beta_{12}$ be the line that goes through $p_{12}$ with slope $ -0.052668 $.
Thus $\beta_{12}$ is simple.
\item Let $\beta_{13}$ be the line that goes through $p_{13}$ with slope $  0.052 $.
Thus $\beta_{13}$ is simple.
\item Let $\beta_{14}$ be the line that goes through $p_{14}$ with slope $ -1.1994 $.
Thus $\beta_{14}$ is simple.
\item Let $\beta_{15}$ be the line that goes through $p_{15}$ with slope $ -1.2591 $.
Thus $\beta_{15}$ is simple.
\item Let $\beta_{17}$ be the line that goes through $p_{17}$ with slope $-1.35028010$.
Thus $\beta_{17}$ is simple.
\item Let $\beta_{18}$ be the line $\theta^2(\beta_{17}) $.
Thus $\beta_{18}$ is simple and goes through $p_{18}$.  
\item Let $\beta_{20}$ be the line $\theta(\beta_{15}) $.
Thus $\beta_{20}$ is simple and goes through $p_{20}$.  
\item Let $\beta_{21}$ be the line $\theta(\beta_{14}) $.
Thus $\beta_{21}$ is simple and goes through $p_{21}$.  
\item Let $\beta_{22}$ be the line $\theta(\beta_{13}) $.
Thus $\beta_{22}$ is simple and goes through $p_{22}$.
\item Let $\beta_{23}$ be the line $\theta(\beta_{17}) $.
Thus $\beta_{23}$ is simple and goes through $p_{23}$.
\item Let $\beta_{	25	}$ be the line $\theta^2(\beta_{	14	}) $.
Thus $\beta_{	25	}$ is simple and goes through $p_{	25	} $.
\item Let $\beta_{	26	}$ be the line $\theta^2(\beta_{	15	}) $.
Thus $\beta_{	26	}$ is simple and goes through $p_{	26	} $.
\item Let $\beta_{	27	}$ be the line $\theta^2(\beta_{ 13	}) $.
Thus $\beta_{	27	}$ is simple and goes through $p_{	27	} $.
\item Let $\beta_{	28	}$ be the line $\theta^2(\beta_{	12	}) $.
Thus $\beta_{	28	}$ is simple and goes through $p_{	28	} $.
\item Let $\beta_{	31	}$ be the line $\theta(\beta_{	12	}) $.
Thus $\beta_{	31	}$ is simple and goes through $p_{	31	} $.
\item Let $\beta_{	34	}$ be the line $\theta^2(\beta_{	9	}) $.
Thus $\beta_{	34	}$ is simple and goes through $p_{	34	} $.
\item Let $\beta_{	35	}$ be the line $\theta(\beta_{	9	}) $.
Thus $\beta_{	35	}$ is simple and goes through $p_{	35	} $.
\item Let $\beta_{	36	}$ be the line $\theta(\beta_{	8	}) $.
Thus $\beta_{	36	}$ is simple and goes through $p_{	36	} $.
\item Let $\beta_{	37	}$ be the line $\theta^2(\beta_{	8	}) $.
Thus $\beta_{	37	}$ is simple and goes through $p_{	37	} $.
\item Let $\beta_{	38	}$ be the line $\theta(\beta_{	7	}) $.
Thus $\beta_{	38	}$ is simple and goes through $p_{	38	} $.
\item Let $\beta_{	39	}$ be the line $\theta^2(\beta_{	7	}) $.
Thus $\beta_{	39	}$ is simple and goes through $p_{	39	} $.
\item Let $\beta_{	40	}$ be the line $\theta^2(\beta_{	6	}) $.
Thus $\beta_{	40	}$ is simple and goes through $p_{	40	} $.
\item Let $\beta_{	41	}$ be the line $\theta(\beta_{	6	}) $.
Thus $\beta_{	41	}$ is simple and goes through $p_{	41	} $.
\item Let $\beta_{	42	}$ be the line $\theta^2(\beta_{	5	}) $.
Thus $\beta_{	42	}$ is simple and goes through $p_{	42	} $.
\item Let $\beta_{	43	}$ be the line $\theta(\beta_{	5	}) $.
Thus $\beta_{	43	}$ is simple and goes through $p_{	43	} $.
\item Let $\beta_{44}$ be the line $\theta(\beta_{4}) $.
Thus $\beta_{44}$ is simple and goes through $p_{44}$.  
\item Let $\beta_{45}$ be the line $\theta^2(\beta_{4}) $.
Thus $\beta_{45}$ is simple and goes through $p_{45}$.  
\item Let $\beta_{	48	}$ be the line $\theta^2(\beta_{	2	}) $.
Thus $\beta_{	48	}$ is splitting and goes through $p_{	45 } $ and $p_{48} $, directed from $p_{48}$ towards $p_{45}$.
\item Let $\beta_{	49	}$ be the line $\theta(\beta_{	2	}) $.
Thus $\beta_{	49	}$ is splitting and goes through $p_{	44	}$ and $p_{	49} $, directed from $p_{49}$ towards $p_{44}$. 
\item Let $\beta_{	50	}$ be the line $\theta^2(\beta_{	1	}) $.
Thus $\beta_{50}$ is simple and goes through $p_{50}$.  
\item Let $\beta_{	51	}$ be the line $\theta(\beta_{	1	}) $.
Thus $\beta_{51}$ is simple and goes through $p_{51}$.  

\end{enumerate}

\clearpage

\subsection{(How to construct) A drawing 
of $K_{99}$ with $1404552$ crossings}

We describe how to obtain a drawing of $K_{99}$ with $1404552$
crossings using the construction technique in Section~\ref{gencon}.
As explained at the end of Section~\ref{symdra},  
it suffices to give a base drawing $D_{m}$ for some
suitable $m<n$ (equivalently, the underlying point set $P_m$), 
the cluster models $S_{i}$, $i=1,\ldots,m$, and a pre--halving set of
lines $\{\beta_i\}_{i\in I}$ for those points in $P_{m}$ that get
transformed into a cluster.  In this case, we work with
a base set with $51$ points, that is, $m=51$.

These ingredients are given below. The result is a drawing of
$K_{99}$ with $1404552$ crossings.

\subsubsection{The base point configuration}

We use as base configuration the $51$--point set
$P=\{p_1,p_2,\ldots,p_{51}\}$ from Section~\ref{t315}.  

\subsubsection{The cluster models}

The cluster models for those points that do not get augmented or get
augmented into a cluster of size $2$ or $3$ are trivial (any point
sets in general position work). Since all clusters in this case are of
size $1$, $2$, or $3$, the description is greatly simplified in this
case:

For $i= 6 , 10 , 11 , 16 , 19 , 24 , 29 , 30 , 32 , 33 , 40 , 41$ we let $S_i$ have one point (there is no need
to specify its coordinates, as we mentioned above).

For $i = 1 , 2 , 3 , 7 , 8 , 12 , 13 , 14 , 15 , 17 , 18 , 20 , 21 , 22 , 23 , 25 , 26 , 27 , 28 , 31 , 36 , 37 , 38 , 39 , 46 , 47 , 48 , 49$, $50$ and $51$, we let $S_i$ have two points  (there is no need
to specify its coordinates, as we mentioned above).

For $i = 4, 5, 9, 34, 35, 42, 43, 44, 45$, we let $S_i$ have three points  (there is no need
to specify its coordinates, as we mentioned above).

Thus, the set $I$ of those subscripts $i$ such that $s_i:=|S_i|>1$ is 
$I=\{ 1 , 2 , 3 , 4, 5, 7 , 8 , 9, 12 , 13 , 14 , 15 , 17 , 18 , 20 , 21 , 22 , 23 , 25 , 26 , 27 , 28 , 31 , 34, 35, 36 , 37 , 38 , 39 , 42, 43, 44$, 
$45, 46 , 47 , 48 , 49 , 50 , 51\}$. 

\subsubsection{A pre--halving set of lines}

We finally define a pre--halving set of lines 
$\{\beta_i\}_{i\in I}$.

\begin{enumerate}

\item Let $\beta_{1}$ be the line that goes through $p_{1}$ with slope $ 0.0063 $.
Thus $\beta_1$ is simple.
\item Let $\beta_{2}$ be the line that goes through $p_{2}$ with slope $ 0.005 $.
Thus $\beta_2$ is simple.
\item Let $\beta_{3}$ be the line that goes through $p_{3}$ with slope $0.001 $.
Thus $\beta_3$ is simple.
\item Let $\beta_4$ be the line that goes through $p_4$ and $p_7$,
  directed from $p_4$ towards $p_7$. 
\item Let $\beta_{5}$ be the line that goes through $p_{5}$ with slope $ -0.026 $.
Thus $\beta_5$ is simple.
\item Let $\beta_{7}$ be the line that goes through $p_{7}$ with slope $ 0.055 $.
Thus $\beta_7$ is simple.
\item Let $\beta_{8}$ be the line that goes through $p_{8}$ with slope $ -0.17 $.
Thus $\beta_8$ is simple.
\item Let $\beta_{9}$ be the line that goes through $p_{9}$ with slope $ -0.1763 $.
Thus $\beta_9$ is simple.
\item Let $\beta_{12}$ be the line that goes through $p_{12}$ with slope $ -0.416 $.
Thus $\beta_{12}$ is simple.
\item Let $\beta_{13}$ be the line that goes through $p_{13}$ with slope $ -1.08 $.
Thus $\beta_{13}$ is simple.
\item Let $\beta_{14}$ be the line that goes through $p_{14}$ with slope $ -1.1994 $.
Thus $\beta_{14}$ is simple.
\item Let $\beta_{15}$ be the line that goes through $p_{15}$ with slope $ -1.2591 $.
Thus $\beta_{15}$ is simple.
\item Let $\beta_{17}$ be the line that goes through $p_{17}$ with slope $-1.35$.
Thus $\beta_{17}$ is simple.
\item Let $\beta_{18}$ be the line $\theta^2(\beta_{17}) $.
Thus $\beta_{18}$ is simple and goes through $p_{18}$.  
\item Let $\beta_{20}$ be the line $\theta(\beta_{15}) $.
Thus $\beta_{20}$ is simple and goes through $p_{20}$.  
\item Let $\beta_{21}$ be the line $\theta(\beta_{14}) $.
Thus $\beta_{21}$ is simple and goes through $p_{21}$.  
\item Let $\beta_{22}$ be the line $\theta(\beta_{13}) $.
Thus $\beta_{22}$ is simple and goes through $p_{22}$.
\item Let $\beta_{23}$ be the line $\theta(\beta_{17}) $.
Thus $\beta_{23}$ is simple and goes through $p_{23}$.
\item Let $\beta_{	25	}$ be the line $\theta^2(\beta_{	14	}) $.
Thus $\beta_{	25	}$ is simple and goes through $p_{	25	} $.
\item Let $\beta_{	26	}$ be the line $\theta^2(\beta_{	15	}) $.
Thus $\beta_{	26	}$ is simple and goes through $p_{	26	} $.
\item Let $\beta_{	27	}$ be the line $\theta^2(\beta_{ 13	}) $.
Thus $\beta_{	27	}$ is simple and goes through $p_{	27	} $.
\item Let $\beta_{	28	}$ be the line $\theta^2(\beta_{	12	}) $.
Thus $\beta_{	28	}$ is simple and goes through $p_{	28	} $.
\item Let $\beta_{	31	}$ be the line $\theta(\beta_{	12	}) $.
Thus $\beta_{	31	}$ is simple and goes through $p_{	31	} $.
\item Let $\beta_{	34	}$ be the line $\theta^2(\beta_{	9	}) $.
Thus $\beta_{	34	}$ is simple and goes through $p_{	34	} $.
\item Let $\beta_{	35	}$ be the line $\theta(\beta_{	9	}) $.
Thus $\beta_{	35	}$ is simple and goes through $p_{	35	} $.
\item Let $\beta_{	36	}$ be the line $\theta(\beta_{	8	}) $.
Thus $\beta_{	36	}$ is simple and goes through $p_{	36	} $.
\item Let $\beta_{	37	}$ be the line $\theta^2(\beta_{	8	}) $.
Thus $\beta_{	37	}$ is simple and goes through $p_{	37	} $.
\item Let $\beta_{	38	}$ be the line $\theta(\beta_{	7	}) $.
Thus $\beta_{	38	}$ is simple and goes through $p_{	38	} $.
\item Let $\beta_{	39	}$ be the line $\theta^2(\beta_{	7	}) $.
Thus $\beta_{	39	}$ is simple and goes through $p_{	39	} $.
\item Let $\beta_{	42	}$ be the line $\theta^2(\beta_{	5	}) $.
Thus $\beta_{	42	}$ is simple and goes through $p_{	42	} $.
\item Let $\beta_{	43	}$ be the line $\theta(\beta_{	5	}) $.
Thus $\beta_{	43	}$ is simple and goes through $p_{	43	} $.

\item Let $\beta_{	44	}$ be the line $\theta(\beta_{	4	}) $.
Thus $\beta_{	44	}$ is splitting and goes through $p_{	44	} $ and $p_{38} $, directed from $p_{44}$ towards $p_{38}$.
\item Let $\beta_{	45	}$ be the line $\theta^2(\beta_{	4	}) $.
Thus $\beta_{	45	}$ is splitting and goes through $p_{	45	} $ and $p_{39} $, directed from $p_{45}$ towards $p_{39}$.

\item Let $\beta_{46}$ be the line $\theta^2(\beta_{3}) $.
Thus $\beta_{46}$ is simple and goes through $p_{46}$.  
\item Let $\beta_{47}$ be the line $\theta(\beta_{3}) $.
Thus $\beta_{47}$ is simple and goes through $p_{47}$.  
\item Let $\beta_{	48	}$ be the line $\theta^2(\beta_{	2	}) $.
Thus $\beta_{48}$ is simple and goes through $p_{48}$.  
\item Let $\beta_{	49	}$ be the line $\theta(\beta_{	2	}) $.
Thus $\beta_{49}$ is simple and goes through $p_{49}$.  
\item Let $\beta_{	50	}$ be the line $\theta^2(\beta_{	1	}) $.
Thus $\beta_{50}$ is simple and goes through $p_{50}$.  
\item Let $\beta_{	51	}$ be the line $\theta(\beta_{	1	}) $.
Thus $\beta_{51}$ is simple and goes through $p_{51}$.  

\end{enumerate}

\clearpage

\end{document}